\definecolor{orange}{rgb}{1,0.5,0}
\def\Z{{\mathbb Z}}\def\T{{\mathbb T}}\def\R{{\mathbb R}}\def\C{{\mathbb C}}
\def\CC{{\mathcal C}}\def\LL{{\mathcal L}}
\def\EE{{\mathcal E}}
\def\OO{{\mathcal O}}
\def\MM{{\mathcal M}}\def\PP{{\mathcal P}}
\def\bC{{\mathbb C}}
\def\cC{{\mathcal C}}\def\cL{{\mathcal L}}
\def\cT{{\mathcal T}}\def\cE{{\mathcal E}}
\def\cN{{\mathcal N}}\def\cD{{\mathcal D}}
\def\cM{{\mathcal M}}\def\cP{{\mathcal P}}
\def\Jac{\rm Jac}
\def\mes{\rm mes}
\def\f{\varphi}\def\o{\omega}\def\e{\eta}
\def\g{\gamma}
\def\s{\sigma}\def\k{\kappa}
\def\al{\alpha}\def\be{\beta}\def\ep{\varepsilon}
\def\be{\beta}\def\t{\tau}\def\la{\lambda}\def\m{\mu}
\def\th{\theta}
\def\L{\Lambda}\def\G{\Gamma}
\def\i{\infty}
\def\p{\partial}
\def\Leb{\mathrm{Leb}}
\def\b#1{\lbrace#1\rbrace}
\def\aa#1{\left\Vert#1\right\Vert}
\def\l#1{\langle #1\rangle}
\def\<{\langle}
\def\>{\rangle}
\def\sbs{\subset}
\def\sm{\setminus}
\def\lsim{\lesssim}
\def\Lin{\mathrm{Lin}}
\def\w{\mathrm{anal}}
\theoremstyle{plain}
\newtheorem{Thm}{Theorem}[section]
\newtheorem{Prop}[Thm]{Proposition}
\newtheorem{Lem}[Thm]{Lemma}
\newtheorem{Cor}[Thm]{Corollary}
\newtheorem*{Rem}{Remark}
\newtheorem*{Cor*}{Corollary}
\newtheorem*{Prop*}{Proposition}
\def\cC{\mathcal C}
\newcommand{\ph}{\varphi}
\def\a{\alpha}
\def\w{\omega}
\def\ti{\tilde}
\def\e{\varepsilon}
\def\pa{\partial}
\def\s{\sigma}
\def\th{\theta}
\newcommand{\ntop}[2]{\genfrac{}{}{0pt}{1}{#1}{#2}}
\let\newpf\proof \let\proof\relax
\newcommand{\ba}{\overline{A}}
\newcommand{\cO}{\mathcal{O}}
\newcommand{\cR}{\mathcal{R}}
\def\be{\begin{equation}}
\def\ee{\end{equation}}
\def\ba{{\begin{align}}}
\def\ea{{\end{align}}}
\def\bm{\begin{matrix}}
\def\em{\end{matrix}}
\def\a{{\alpha}}
\def\cL{{\mathcal L}}
\def\NR{\mathcal{N}\mathcal{R}}
\def\und{\underline}
\def\g{{\gamma}}
\def\bD{\mathbb{D}}
\def\bN{\mathbb{N}}
\def\0{{\mathbf 0}}
\newtheorem{thm}{Theorem}[section]
\newtheorem{lemma}[thm]{Lemma}
\theoremstyle{remark}
\def\cO{\mathcal{O}}
\def\cQ{\mathcal{Q}}
\def\cC{\mathcal{C}}
\def \bn {\hfill \\ \smallskip\noindent}
\theoremstyle{definition}
\newenvironment{proof}{ \noindent{\it Proof.}\quad}{\ \hfill $\Box$\vskip .2cm}
\def\sm{\smallsetminus}
\newcommand{\id}{\operatorname{id}}
\newcommand{\eps}{{\epsilon}}
\newcommand{\De}{{\Delta}}
\newcommand{\de}{{\delta}}
\newcommand{\si}{{\sigma}}
\newcommand{\om}{{\omega}}
\newcommand{\N}{{\mathbb N}}
\newcommand{\bR}{{\mathbb R}}
\newcommand{\bT}{{\mathbb T}}
\def\B0{{\bold{0}}}
\def\be{\begin{equation}}
\def\ee{\end{equation}}
\def\Empty{}
\newcommand\oplabel[1]{
  \def\OpArg{#1} \ifx \OpArg\Empty {} \else
  	\label{#1}
  \fi}
\newcommand{\comm}[1]{}
\newcommand{\comment}[1]{}
\newenvironment{dedication}
  {
   \thispagestyle{empty}
   \vspace*{\stretch{1}}
   \itshape             
   \raggedleft          
  }
  {\par 
   \vspace{\stretch{3}} 
  }
\begin{document}

\title[KAM-tori near an analytic elliptic fixed point ]
{KAM-tori near an analytic elliptic fixed point }
\author{L. H. Eliasson, B. Fayad, R. Krikorian}
\address{
IMJ-PRG University Paris-Diderot\\
IMJ-PRG CNRS\\
LPMA UPMC}
\email{hakan.eliasson@math.jussieu.fr, bassam@math.jussieu.fr, raphael.krikorian@upmc.fr }

\date{\today}

\thanks{Supported by  ANR-10-BLAN 0102 and  ANR-11-BS01-0004}

\maketitle

\begin{dedication}
Dedicated to our friend Alain Chenciner
\end{dedication}

\begin{abstract} 

$ \ $ 
We study the accumulation of an elliptic fixed point of a real analytic Hamiltonian   by  quasi-periodic invariant tori.  


We show that a fixed point   with Diophantine frequency vector  $\o_0$ is always accumulated by  invariant  complex analytic KAM-tori. Indeed,  the following alternative holds:   If the  Birkhoff normal form of the Hamiltonian  at the invariant point  satisfies a R\"ussmann transversality condition, the  fixed point  is accumulated by real analytic KAM-tori which cover positive Lebesgue measure in the phase space (in this part it suffices to assume that $\o_0$ has rationally independent coordinates). If the Birkhoff normal form is degenerate,  there exists an analytic subvariety   of complex dimension at least $d+1$  passing through 0   that is foliated by  
 complex analytic KAM-tori with frequency $\omega_0$. 
 
This is an extension of previous results obtained in \cite{EFK} to the case of an elliptic fixed point.
\end{abstract}

\tableofcontents 

\section{Introduction}\label{sIntro}

 Let
$\o_0\in\R^d$  and let
$$
 (*) \quad \left\{\begin{array}{l}
H(x,y)=\l{\o_0,r} +\cO^3(x,y)\\
r=(r_1,\dots,r_d),\quad r_j=\frac12(x_j^2+y_j^2)
\end{array}\right. $$
be a 
real analytic function defined in a neighborhood of $(0,0)$. The Hamiltonian system associated to $H$ is given by the vector field $X_H=(\pa_{y}H, -\pa_{x}H)$, namely
$$  \quad\left\{\begin{array}{l}
\dot x=\pa_{y}H(x,y)\\ 
\dot y=-\pa_{x}H(x,y).\end{array}\right.$$
The flow of $X_H$ has a fixed point $\cP_0=\b{(0,0)}$. 
We are interested in the study of whether this Hamiltonian system admits, besides $\cP_{0}$, other  invariant sets. More precisely, we shall try to find real analytic {\it KAM-tori for $X_H$} in a neighborhood of $\cP_{0}$, that is,  real analytic Lagrangian  tori invariant under $X_H$ on each one of  which the flow of $X_H$ is conjugated to a translation flow  $\ph\mapsto \ph+t \omega$; it is usually required (and we shall follow this requirement)  that $\omega\in\T^d$, the {\it frequency vector}, is in some Diophantine set $DC(\kappa,\tau)$ ($\kappa,\tau>0$)  defined by the property
\begin{equation} |\<k,\o\>|\ge \frac{\k}{|k|^{\t}}\quad \forall k\in\Z^d\sm \{0\}. \label{cd} \end{equation}
We will say that $\o_0 \in \R^d$  is irrational when its coordinates are rationally independent.

We call the complexification of  a real analytic KAM-torus  a {\it complex analytic KAM-torus for $X_H$}, that is, a complex analytic Lagrangian toric manifold invariant  under (the complexification of) $X_H$ on which the flow is conjugated  to a translation flow  $\ph\mapsto \ph+t \omega$. Note that there are complex analytic KAM-tori that are not the complexification of any real KAM-torus. 
Invariant complex analytic toric manifolds  were studied in different settings related to KAM theory (see for example \cite{Sto}).

Notice that the problem of finding real or complex analytic KAM-tori in a neighborhood of the invariant fixed point $\cP_{0}$ can be given various more or less strong forms. For example, one can ask for  finding a  set of KAM  tori whose Lebesgue density in the phase space tend to one in the neighborhood of $\cP_0$. We shall call this KAM stability.

 In classical KAM theory, an elliptic fixed point $\cP_0$ is shown to be KAM-stable under the hypothesis that $\o_0$ is irrational (or just sufficiently non resonant) and that $H$ satisfies  a Kolmogorov non degeneracy condition of its Hessian matrix at $\cP_0$. Further development of the theory allowed to relax the non degeneracy condition. In this paper we prove KAM stability of $\cP_0$ under the R\"ussmann transversality condition on the Birkhoff normal form of $H$ at $\cP_0$.

We note that for non singular perturbative theory of analytic Hamiltonians it is known that the R\"ussmann condition is necessary and sufficient for KAM stability -- survival after perturbation of a positive measure set of KAM-tori -- of analytic integrable Hamiltonian systems (see  \cite{R} and \cite{Sev}).  We stress however that the study of the dynamics in a neighborhood of an elliptic fixed point, or near a given invariant torus, is a singular perturbation problem and that, therefore, the latter results  do not apply {\it per se}.

\medskip 
 
 The problem is more tricky if no nondegeneracy conditions are imposed on the Hamiltonian. In the analytic setting, no examples are known of an elliptic fixed point $\cP_0$ with $\omega_0$ irrational that is not KAM stable.  
It was conjectured by M. Herman in his ICM98-lecture \cite{H}
that for analytic Hamiltonians, KAM stability holds in the neighborhood of a  KAM  torus $\cT_0$ or of an elliptic fixed point $\cP_0$ if their frequency is assumed to be Diophantine. The conjecture is known to be true in two degrees of freedom $d=2$ \cite{R}, but remains open in general.

In this paper, we show that  a fixed point   with Diophantine frequency vector  $\o_0$ of an analytic Hamiltonian is always accumulated by complex analytic KAM-tori.   We previously obtained a similar  result in the neighborhood of an invariant analytic torus with Diophantine frequency vector \cite{EFK}. In the latter setting, the tori obtained were real analytic, but in the context of elliptic fixed points our method does not necessarily yield real analytic tori.

The current paper follows the same strategy  as in \cite{EFK}Êand provides the necessary modifications required by the absence of nice action-angle coordinates in the neighborhood of the  fixed point. 

An advantage however of the elliptic fixed point case, compared to that of an invariant torus, is that the Birkhoff normal form can be defined and conjugations up to any order can be performed under the sole condition that $\o_0$  is irrational. This is why we obtain the KAM stability of any irrational fixed point under the  R\"ussmann transversality condition, a result that we could not obtain for an invariant torus with irrational frequency, except in $2$ degrees of freedom (see  \cite{EFK}, Sections 2 and 9).

\subsection{Statement of the result}

Our main theorem is the following.
\begin{Thm}\label{theo:1.1}  Let $H:(\R^{2d},0)\to \R$ be a real  analytic function of the form $(*)$
and assume that  $\omega_0$ is Diophantine. Then, the origin is accumulated by infinitely many complex  analytic 
KAM-tori for $X_H$.\end{Thm}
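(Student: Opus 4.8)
The plan is to treat the analysis near the elliptic fixed point as a singular perturbation of the Birkhoff normal form and run a KAM scheme that works in the complex domain, so that transversality of the normal form is not needed for the weaker conclusion (mere accumulation by complex tori).

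\medskip

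\textbf{Step 1: Birkhoff normalization to high order.} Since $\o_0$ is Diophantine (in particular irrational), for any $N$ one can find a real analytic symplectic change of variables, defined on a ball of radius $\r_N$ around $0$, conjugating $H$ to $H_N(x,y)=\l{\o_0,r}+\Pi_N(r)+R_N(x,y)$, where $\Pi_N$ is a polynomial in $r=(r_1,\dots,r_d)$ of degree $\le N$ (the truncated Birkhoff normal form) and the remainder $R_N$ vanishes to order $2N+2$ at the origin. The key point, already present in \cite{EFK} but simpler here because the fixed point admits a genuine Birkhoff normal form, is to keep careful track of the analyticity widths and of the size of $R_N$ as a function of $N$ and of the radius; one then rescales $x=\sqrt{\eps}\,\ti x$, $y=\sqrt{\eps}\,\ti y$ so that the normal-form part becomes $\eps\l{\o_0,\ti r}+\eps^2(\dots)+\dots$ and the remainder becomes exponentially small in a suitable power of $1/\eps$ relative to the size of the integrable part.

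\medskip

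\textbf{Step 2: Choice of a frequency and a KAM torus of the normal form.} For the truncated normal form $h_N(r)=\l{\o_0,r}+\Pi_N(r)$, every level $\{r=r^*\}$ with $r^*$ in the positive orthant is an invariant $d$-torus with frequency $\o^*=\nabla h_N(r^*)=\o_0+\nabla\Pi_N(r^*)$. Picking $r^*$ small, $\o^*$ is a small perturbation of $\o_0$, hence still Diophantine (with slightly worse constants) for $r^*$ in a set of near-full measure; alternatively, and this is the robust choice that avoids any nondegeneracy hypothesis, one fixes the target frequency to be exactly $\o_0$ and looks for a torus of $H$ with that frequency. In the degenerate situation the real preimage $\nabla\Pi_N(r^*)=0$ may be empty or have no positive solution, which is precisely why one must allow the torus to live in the \emph{complexified} phase space: one solves $\nabla\Pi_N(r^*)=0$ over $\C^d$, getting a complex $r^*$ close to $0$, and the corresponding complex Lagrangian torus $\{r=r^*\}$ is invariant for the complexified normal-form flow with frequency $\o_0$.

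\medskip

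\textbf{Step 3: KAM iteration in the complex domain.} Starting from the complex invariant torus of the normal form, one runs a Newton/KAM iteration for the full (complexified) Hamiltonian $H$, viewing $R_N$ as the perturbation. Because $R_N$ is super-exponentially small after rescaling while the small-divisor losses at each step are only polynomial in $N$, the iteration converges and produces a complex analytic Lagrangian torus invariant under $X_H$ on which the flow is the translation by $\o_0$; the Diophantine property of $\o_0$ is exactly what is needed to solve the homological equations at each step. Repeating this for a sequence $\eps_n\to 0$ (equivalently a sequence of shrinking neighborhoods, with $N=N(\eps_n)\to\infty$) yields infinitely many such tori accumulating at the origin, which is the assertion of Theorem~\ref{theo:1.1}. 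One must check these tori are genuinely distinct and genuinely accumulate $0$, which follows from the uniform control on their size $\sim\sqrt{\eps_n}$.

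\medskip

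\textbf{Main obstacle.} The delicate point is the bookkeeping in Step 1: one needs quantitative Birkhoff normalization showing that the optimal truncation order $N$ can be chosen as a function of the neighborhood size so that the remainder is small enough to be absorbed by a convergent KAM scheme, all while the analyticity widths do not collapse too fast — this is the analogue of the "analytic smoothing / optimal truncation" estimates of \cite{EFK}, and it is where the real analyticity of $H$ is used in an essential way. The secondary difficulty is purely geometric: making sense of, and controlling, Lagrangian \emph{complex} tori $\{|x_j|^2+|y_j|^2 = \text{const}\}$-type objects over $\C^d$ and proving the limit object of the KAM scheme is an honest complex analytic toric manifold rather than a formal object.
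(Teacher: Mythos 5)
There is a genuine gap, and it sits exactly at the heart of the problem: Step 3 asserts that a Newton/KAM iteration with the frequency held fixed at $\omega_0$ converges because the remainder $R_N$ is (super-)exponentially small, but smallness of the perturbation is not the obstruction here. In a fixed-frequency Kolmogorov-type scheme, each step produces, besides the quadratically smaller error, a constant \emph{frequency-correction} term (the average of the gradient part of the error along the approximate torus). This term must be removed either by a twist condition (re-adjusting the action so that the frequency comes back to $\omega_0$) or by introducing a counterterm $\langle\Lambda,\cdot\rangle$, in which case one constructs a torus for $H+\langle\Lambda,\cdot\rangle$ rather than for $H$. In the degenerate case -- the only case where your complexification is needed -- the image of $\nabla N_H-\omega_0$ lies in the subspace orthogonal to the degenerate directions $\gamma_1,\dots,\gamma_j$, so the correction produced by $R_N$ may point precisely along $\gamma_i$, where \emph{no} choice of the (complex) action $r^*$ can compensate it. Exponential smallness of $R_N$ only makes the uncancelled counterterm small, not zero, and a torus invariant for $H+\langle\Lambda,\cdot\rangle$ with $\Lambda\neq0$ is not a torus for $H$. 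The paper's entire mechanism is designed to kill this term: the counterterm theorem (Proposition \ref{counterthm}), the frequency map $\Omega$ solving $\Omega(c)+\Lambda(c,\Omega(c))=0$ (Corollary \ref{bnfIII}), the identification of the jets of $\Omega$ with $\nabla N_H$ via the formal uniqueness statement (Proposition \ref{bnfIIelliptic-zw}), and then, in the degenerate case, the analyticity of $\Lambda(\cdot,\omega_0)$ in $c$ (Proposition \ref{counterthm}(iii)) combined with the resulting flatness along $\mathrm{Lin}(\gamma)$, which forces $\Lambda(\langle s,\gamma\rangle,\omega_0)\equiv0$ for real and hence complex $s$ -- that identity, not the smallness of a remainder, is what produces tori of $H$ with frequency exactly $\omega_0$ at complex parameter values. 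Your proposal contains no substitute for this flatness-plus-analyticity argument.

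Two secondary points also need repair. First, your dichotomy in Step 2 is not justified as stated: the existence of nonzero complex solutions of $\nabla\Pi_N(r^*)=0$ near $0$ is not automatic from ``the real preimage is empty''; it does hold in the $j$-degenerate case (the map $\nabla\Pi_N$ then takes values in a $(d-j)$-dimensional subspace, so its zero variety through $0$ has complex dimension at least $j\ge1$), but that argument must be made, and it is essentially the source of the $(d+j)$-dimensional subvariety in Theorem \ref{mA}. Second, in the nondegenerate branch the claim that $\omega^*=\omega_0+\nabla\Pi_N(r^*)$ is Diophantine for $r^*$ in a set of near-full measure requires the R\"ussmann transversality and a Pyartli-type measure estimate (Lemmas \ref{transversality} and \ref{pyartly}); it is false for a general frequency map without such transversality, and even with it one still faces the absence of a Kolmogorov twist, which again forces the counterterm/parameter-exclusion route rather than a direct fixed-frequency iteration.
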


 Let $N_H$ be the {\it Birkhoff Normal Form} of $H$ --  for the Birkhoff Normal Form
at a Diophantine, and more generally an irrational elliptic equilibrium, one can consult for example
\cite{SM}.

We say that $N_H$ is $j$-{\it degenerate} if there exist
$j$ orthonormal vectors $\g_1,\dots,\g_j$ such that for every   $r\sim 0\in\bR^d$
$$\l{\p_rN_H(r),\g_i}=0\quad \forall\ 1\le i\le j,$$
but no $j+1$ orthonormal vectors with this property. Since
$\omega_0\not=0$ clearly $j\le d-1$. 
{ A $0$-degenerate $N_H$ is 
also said to be {\it non-degenerate}.}

Our Main Theorem  is the consequence of  Theorems \ref{mA} and \ref{mB} below.

\begin{Thm}\label{mA}   Let $H:(\R^{2d},0)\to \R$ be a real  analytic function of the form $(*)$
and assume that  $\omega_0$ is Diophantine. If $N_H$ is $j$-degenerate,
then there exists an analytic subvariety containing $0$ of complex dimension $d+j$ 
 foliated by invariant complex analytic KAM-tori for $X_H$  with  translation vector $\o_0$.
\end{Thm}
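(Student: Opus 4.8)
The strategy follows \cite{EFK}, adapted to the fixed point. First I would put $H$ in Birkhoff normal form: since $\omega_0$ is irrational, for every $M$ there is a real analytic symplectic change of coordinates $\Phi_M$, defined near $0$, such that $H\circ\Phi_M = \langle\omega_0,r\rangle + N_H^{(M)}(r) + R_M(x,y)$, where $N_H^{(M)}$ is a polynomial of degree $\le M$ in $r$ depending only on the actions and $R_M = \cO^{2M+1}(x,y)$ is a remainder that is small to high order. The hypothesis that $N_H$ is $j$-degenerate means that, writing $N_H = \langle\omega_0,r\rangle + N_H^{\ge 2}(r)$ with $\partial_r N_H = \omega_0 + \partial_r N_H^{\ge 2}$, there are $j$ orthonormal vectors $\gamma_1,\dots,\gamma_j$ with $\langle\partial_r N_H(r),\gamma_i\rangle = 0$ identically near $0$. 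Geometrically this says the frequency map $r\mapsto\partial_r N_H(r)$ takes values in the affine subspace $\omega_0 + V$, where $V = \{v : \langle v,\gamma_i\rangle = 0,\ \forall i\} = \langle\gamma_1,\dots,\gamma_j\rangle^\perp$ has dimension $d-j$; and it also says the component of the action in the directions $\gamma_i$ is a conserved-to-all-orders quantity of the normal form.

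Next I would explain where the complex subvariety comes from. For the truncated normal form $\langle\omega_0,r\rangle + N_H^{\ge 2}(r)$ alone, every torus $\{r = r^*\}$ is invariant with translation vector $\partial_r N_H(r^*) = \omega_0 + \partial_r N_H^{\ge 2}(r^*)$; the set of $r^*$ near $0$ for which this equals exactly $\omega_0$ is the real-analytic set $\{\partial_r N_H^{\ge 2}(r) = 0\}$, which by $j$-degeneracy has dimension $\ge j$ (it contains $0$ and, because the image of $\partial_r N_H^{\ge 2}$ lies in the $(d-j)$-dimensional space $V$, its zero set generically has dimension $j$; in any case $\ge j$ by the degeneracy count). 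Lifting the corresponding $j$-dimensional family of $d$-tori into the $2d$-dimensional phase space gives a set of real dimension $d+j$; allowing the actions to be complex near $0$ while still solving $\partial_r N_H^{\ge 2}(r) = 0$ yields a complex analytic subvariety $Z_0$ of complex dimension $d+j$ foliated by the complexified invariant tori of the normal form. The heart of the matter is to show this persists for the actual Hamiltonian $H$: one must correct each torus in $Z_0$ to a genuine invariant complex analytic KAM-torus for $X_H$ with the \emph{same} frequency $\omega_0$, and do so analytically in the parameter so that the perturbed tori still fill out a complex subvariety of dimension $d+j$.

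This persistence step is the main obstacle and is carried out by a KAM iteration in the spirit of \cite{EFK}: run the Birkhoff normalization to a large but finite order $M$, so that $R_M$ is of size $\varepsilon_M$ on a polydisc of radius $\rho$ with $\varepsilon_M$ beating any fixed power of $\rho$; then, on each torus of the $(d+j)$-dimensional family, solve the linearized (cohomological) equations to conjugate $X_H$ to the translation $\varphi\mapsto\varphi+t\omega_0$, using that $\omega_0\in DC(\kappa,\tau)$ to invert the small-divisor operator $\langle\omega_0,\partial_\varphi\rangle$, and iterate with quadratic convergence. Two points require care. First, there is no loss from absence of action-angle coordinates near a \emph{fixed} point precisely because, after Birkhoff normalization, the normal form supplies the needed "integrable part" $\langle\omega_0,r\rangle + N_H(r)$ on the whole family at once — this is the advantage noted in the introduction. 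Second, fixing the frequency to be exactly $\omega_0$ (rather than letting it vary) removes $d$ parameters, and it is the $j$ extra directions of degeneracy that provide the room to keep the construction nondegenerate in the remaining $j$ parameters; one checks that the torus depends analytically on this $j$-dimensional parameter (plus the $d$ angle variables and the overall position), so the union is a complex analytic subvariety of dimension $d+j$ through $0$. The convergence and analyticity-in-parameter estimates are the technical core; once they are in place, Theorem \ref{mA} follows, and the non-degenerate case $j=0$ of Theorem \ref{mA} combined with the Rüssmann argument gives the positive-measure statement quoted as Theorem \ref{mB}.
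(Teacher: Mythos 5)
There is a genuine gap at the persistence step, which is exactly where the difficulty of the theorem lies. Your plan is to fix the frequency at $\o_0$ and run a KAM iteration directly on the $(d+j)$-dimensional family coming from the degenerate directions of the Birkhoff normal form. But in those directions the normal form has \emph{no} twist at any order, so at each step of such an iteration the averaged part of the perturbation produces a frequency shift that cannot be absorbed by moving the action (the usual non-degeneracy mechanism) -- and your remark that ``the $j$ extra directions of degeneracy provide the room to keep the construction nondegenerate'' is backwards: degeneracy removes, rather than provides, the freedom needed. The paper resolves this with Herman's counter-term scheme: Proposition \ref{counterthm} conjugates $H+\l{\o+\Lambda(c,\o),\cdot}$ (not $H$) to a normal form with a $(\k,\t)$-flat error, Corollary \ref{bnfIII} produces the frequency map $\Omega(c)$ with $\Omega(c)+\Lambda(c,\Omega(c))=0$ whose Taylor jets equal $\nabla N_H$ (this uses the formal uniqueness result, Proposition \ref{bnfIIelliptic-zw}), and then the decisive point is that along the degenerate plane $\Delta=\{c=\l{s,\g}\}$ the function $s\mapsto \o_0+\Lambda(\l{s,\g},\o_0)$ is simultaneously flat at $0$ (by $j$-degeneracy of $N_H$) and \emph{analytic} in $s$ (part (iii) of Proposition \ref{counterthm}, valid because the frequency is restricted to the Diophantine ray $(1+\la)\o_0$), hence identically zero. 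None of this appears in your proposal, and without it there is no argument that the obstruction to having an invariant torus with frequency exactly $\o_0$ vanishes on the whole plane $\Delta$ rather than only to infinite order at $0$.

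The same missing ingredient undermines your final assertion that ``one checks that the torus depends analytically on this $j$-dimensional parameter.'' In a standard KAM scheme the dependence on parameters is only Whitney-$\CC^\i$; the analyticity in $c$ that the paper needs to complexify $s$, to apply the flat-plus-analytic argument, and to conclude that $\bigcup_{c\in\Delta}\ti Z_{c,\o_0}(\{zw=c\})$ is an analytic subvariety of dimension $d+j$ is a special feature of the construction (the cut-off $\cP$ and the restriction to the fixed Diophantine direction), established in Theorem \ref{theo:5.1} and transmitted through Proposition \ref{counterthm}(iii); it has to be proved, not asserted. Two smaller points: the $j$-degeneracy gives you directly that $\p_r N_H\equiv\o_0$ on the whole $j$-plane spanned by $\g_1,\dots,\g_j$ (since $N_H$ does not depend on those components), so the ``generic dimension count'' for the zero set of $\p_r N_H^{\ge 2}$ is both unnecessary and, as stated for a real-analytic map, not a valid argument; and Theorem \ref{mB} is not deduced from the $j=0$ case of Theorem \ref{mA} (which would only yield a $d$-dimensional family), but from the transversality and measure estimates for the frequency map in Sections \ref{s35}--\ref{s36}.
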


A stronger result is known when $N_H$ is $(d-1)$-degenerate. Indeed 
R\"ussmann \cite{R} (in a different setting) proved

\begin{Thm}\label{Russmann}
{If  $\o_0$ is Diophantine and} $N_H$ is $(d-1)$-degenerate,
then a full neighborhood of
$0 \in\bR^{2d}$ is foliated by real analytic KAM-tori  for $X_H$ with translation vector in $\R\o_0$.
\end{Thm}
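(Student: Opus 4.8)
The plan is to deduce this from R\"ussmann's KAM theorem for nearly integrable Hamiltonian systems, the Birkhoff normal form being the bridge that turns the fixed‑point problem into a perturbation of an integrable Hamiltonian whose frequency map is maximally degenerate \emph{along the fixed Diophantine direction} $\omega_0$.

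\textbf{Step 1: reading off the structure of the degeneracy.} First I would evaluate the defining relations of $(d-1)$-degeneracy at $r=0$: they say $\omega_0=\partial_r N_H(0)$ is orthogonal to $V:=\mathrm{span}(\gamma_1,\dots,\gamma_{d-1})$, and since $\dim V^\perp=1$ and $\omega_0\neq 0$ this forces $V^\perp=\mathbb{R}\omega_0$; consequently $\partial_r N_H(r)\in V^\perp=\mathbb{R}\omega_0$ for all $r$ near $0$, say $\partial_r N_H(r)=\mu(r)\,\omega_0$ with $\mu(0)=1$. Symmetry of the Hessian of $N_H$ gives $(\partial_{r_k}\mu)\,\omega_{0,j}=(\partial_{r_j}\mu)\,\omega_{0,k}$, i.e. $\partial_r\mu\parallel\omega_0$, so $\mu$, and hence $N_H$, depends on $r$ only through $u:=\langle\omega_0,r\rangle$: one may write $N_H(r)=\phi(u)$ for a one‑variable analytic germ $\phi$ with $\phi(0)=0$, $\phi'(0)=1$. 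Since $\langle\partial_rN_H,\gamma_i\rangle\equiv 0$ as a formal series, this structure is inherited by every truncation.

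\textbf{Step 2: Birkhoff normalization and rescaling.} As $\omega_0$ is irrational, for each $N$ there is a real analytic symplectomorphism $\Phi_N$ fixing $0$ with $H\circ\Phi_N=\phi_N(u)+R_N$, where $\phi_N$ is the degree‑$N$ truncation of $\phi$ and $R_N=O(|(x,y)|^{2N+2})$. Substituting $(x,y)=\varepsilon(X,Y)$, setting $R_j=\tfrac12(X_j^2+Y_j^2)$ and dividing by $\varepsilon^2$, I would rewrite this, on a neighbourhood of fixed size in $(X,Y)$, as an $\varepsilon$-family $H^\varepsilon=\phi_N^\varepsilon(\langle\omega_0,R\rangle)+\varepsilon^{2N}\widetilde R_\varepsilon$, where $\phi_N^\varepsilon(v)=v+O(\varepsilon^2)$ has $(\phi_N^\varepsilon)'$ close to $1$ and $\widetilde R_\varepsilon$ is uniformly bounded. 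For $N$ large and $\varepsilon$ small this is a tiny perturbation of the integrable Hamiltonian $H_0^\varepsilon(R)=\phi_N^\varepsilon(\langle\omega_0,R\rangle)$, whose frequency map $R\mapsto(\phi_N^\varepsilon)'(\langle\omega_0,R\rangle)\,\omega_0$ takes values in the line $\mathbb{R}\omega_0$.

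\textbf{Step 3: degenerate KAM along $\mathbb{R}\omega_0$, and the main obstacle.} For $H_0^\varepsilon$ every small torus $\{R=c\}$ is invariant, carrying the linear flow with frequency $(\phi_N^\varepsilon)'(\langle\omega_0,c\rangle)\,\omega_0$, which is Diophantine with constants comparable to those of $\omega_0$ \emph{uniformly over all} $c$ near $0$; moreover the only small divisors entering the iteration are $\langle k,(\phi_N^\varepsilon)'(\cdot)\omega_0\rangle=(\phi_N^\varepsilon)'(\cdot)\,\langle k,\omega_0\rangle$, $k\in\mathbb{Z}^d\setminus\{0\}$, bounded below uniformly in $c$. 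This is precisely the degenerate situation treated by R\"ussmann: running the KAM scheme while keeping the frequency fixed on each level set $\{\langle\omega_0,R\rangle=\mathrm{const}\}$ produces, for every $c$ in a full neighbourhood of $0$, a real analytic $X_{H^\varepsilon}$-invariant torus close to $\{R=c\}$ with translation vector in $\mathbb{R}\omega_0$, depending analytically on $c$; these foliate a full neighbourhood, and undoing the rescaling and $\Phi_N$ gives the claim. The hard point — and the reason R\"ussmann's theorem rather than the non‑degenerate or R\"ussmann‑transversal KAM of Theorem~\ref{mA} is needed — is exactly this step: the frequency map has $(d-1)$-dimensional kernel, so no frequency can be re‑adjusted transversally, and what upgrades the usual "positive‑measure" conclusion of degenerate KAM to a \emph{full} neighbourhood is the fact that the single non‑degenerate direction is the \emph{same} Diophantine vector $\omega_0$ on every fibre, so nothing has to be excised; the only work genuinely specific to the fixed‑point setting is checking in Step 2 that the Birkhoff remainder can be made flat enough ($N$ large) to beat the $\varepsilon$-loss from the rescaling.
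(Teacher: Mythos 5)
Your Steps 1--2 are fine (and Step 1's Hessian argument showing $N_H=\phi(\langle\omega_0,r\rangle)$ matches the identity $\partial_rN_H(c)=\mu(\langle c,\omega_0\rangle)\omega_0$ that the paper uses), but Step 3 contains the real content of the theorem and is not justified. After truncating the Birkhoff normal form at order $N$ and rescaling, all you know about $H^\varepsilon$ is that it is $\phi_N^\varepsilon(\langle\omega_0,R\rangle)$ plus a remainder of size $\varepsilon^{2N}$ with no further structure. For such a Hamiltonian the claim that a KAM scheme ``keeping the frequency fixed on each level set'' produces an invariant torus for \emph{every} $c$, with translation vector in $\mathbb{R}\omega_0$, is false in general: the frequency corrections generated at each step of the iteration are full $d$-dimensional vectors, while the degenerate frequency map $c\mapsto\phi'(\langle\omega_0,c\rangle)\omega_0$ gives you only a one-parameter family with which to compensate them, so there is no mechanism forcing the frequencies of the perturbed tori to stay on the line $\mathbb{R}\omega_0$. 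Already the integrable perturbation $\phi_N^\varepsilon(\langle\omega_0,R\rangle)+\varepsilon^{2N}\langle a,R\rangle$ with $a\not\parallel\omega_0$ has all its tori with frequencies off $\mathbb{R}\omega_0$ (and non-integrable perturbations of the same size can destroy tori in the resonant zones so created). The uniform Diophantine property of the \emph{unperturbed} frequencies therefore does not remove the need for excision; what saves the theorem is the $(d-1)$-degeneracy of the BNF of $H$ \emph{to all orders}, which is information not retained once you replace $H$ by ``truncation $+$ small remainder''. Invoking ``R\"ussmann's KAM theorem'' at this point is essentially invoking the statement to be proved (R\"ussmann's 1967 result is exactly this degenerate fixed-point situation), while the nearly-integrable R\"ussmann-nondegeneracy KAM theorems only yield a positive-measure Cantor family of tori, as in Theorem~\ref{mB}, not a full neighborhood and not frequencies in $\mathbb{R}\omega_0$.

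The paper closes this gap by a completely different mechanism: the counter-term theorem (Proposition~\ref{counterthm}) produces $\Lambda(c,\omega)$ which is \emph{analytic in $c$} along $\omega=(1+\lambda)\omega_0$ (item (iii)), and Corollary~\ref{bnfIII}(ii) identifies the Taylor series of the frequency map $\Omega$ with $\nabla N_H(c)=\mu(\langle c,\omega_0\rangle)\omega_0$. Restricting to $c=t\omega_0$, the formal identity $\mu(t)\omega_0+\Lambda(t\omega_0,\mu(t)\omega_0)=\OO^\infty(t)$ together with this analyticity forces $\mu$ to be convergent and the identity to hold exactly, whence $\Omega(c)=\mu(\langle c,\omega_0\rangle)\omega_0$ for \emph{all} small real $c$; every $\Omega(c)$ is then Diophantine, the counter term vanishes for every $c$, and every torus $\tilde Z_{c,\Omega(c)}(\{zw=c\})$ is an invariant real analytic KAM-torus, giving the foliation of a full neighborhood. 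If you want to complete your approach you would have to supply an argument of this strength (exploiting the all-orders degeneracy, e.g.\ via convergence of a modified normalization as in R\"ussmann's original proof, or via the counter-term identity above); no soft ``truncate, rescale, apply degenerate KAM'' argument can do it.
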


Our proof of Theorem \ref{mA} (see Section \ref{sec:mA}) will also yield R\"ussmann's result. 

In the nondegenrate case we will prove the following.

\begin{Thm}\label{mB}  Let $H:(\R^{2d},0)\to \R$ be a real  analytic function of the form $(*)$
and assume that  $\omega_0$ is irrational. 
If  $N_{H}$ is non-degenerate, then  in any neighborhood of
$0\in\bR^{2d}$ the set of real analytic KAM-tori for $X_H$ is of
positive Lebesgue measure and density one at $0$. 
\end{Thm}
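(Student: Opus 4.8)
The plan is to reduce Theorem~\ref{mB} to the non-singular KAM stability result of R\"ussmann via a careful control of the Birkhoff normalization procedure near the fixed point. The starting point is the observation made in the introduction: since $\omega_0$ is irrational, the Birkhoff normal form $N_H$ exists and, for every integer $M$, one can find a real analytic symplectic change of coordinates $\Phi_M$, defined on a neighborhood of $0$ of some radius $\rho_M$, conjugating $H$ to $N_H^{\le M} + R_M$, where $N_H^{\le M}$ is a polynomial of degree $M$ in $r=(r_1,\dots,r_d)$ and $R_M = \cO^{M+1}(x,y)$ is a real analytic remainder. The key difficulty, and the reason one cannot simply quote the non-singular theorems, is that $\rho_M$ shrinks as $M\to\infty$ and the analyticity width of $\Phi_M$ degenerates; so one must quantify how small $\rho_M$, and how large the analytic norm of $R_M$, are allowed to be in terms of $M$, and then match these against the smallness condition required by R\"ussmann's theorem applied at scale $\rho_M$.

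First I would fix a small radius $\rho>0$ and, on the polydisc of size $\rho$ in action-angle-like coordinates (coming from $r_j = \frac12(x_j^2+y_j^2)$, $\theta_j = \arg(x_j+iy_j)$, valid away from the coordinate planes but sufficient since KAM-tori of positive measure avoid them), rescale: set $r = \rho^2 \hat r$. After rescaling, $N_H^{\le M}$ becomes $\langle \omega_0,\hat r\rangle + \rho^2 (\text{higher order in }\hat r)$, so the frequency map $\hat r \mapsto \partial_{\hat r}(N_H^{\le M})$ is an $O(\rho^2)$-perturbation of the constant $\omega_0$; its non-degeneracy as a R\"ussmann-transversal map is inherited from the hypothesis that $N_H$ is $0$-degenerate, once $M$ is chosen large enough (this is a standard fact: finitely many Taylor coefficients of $N_H$ already witness R\"ussmann transversality, and non-degeneracy in our sense forces the transversality order to be finite — one should cite or reprove the relevant statement from \cite{R}, \cite{Sev}). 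Then the perturbation is $R_M$ which, on the polydisc of size $\rho$, has size $\lesssim \rho^{M+1}$ times a constant $C_M$ depending on the (fixed, $M$-dependent) normalizing transformation. The smallness condition in R\"ussmann's non-singular KAM stability theorem at scale $\rho$ requires the perturbation to be smaller than some power $\rho^{a}$ (with $a$ depending only on $d$, $\tau$, $\kappa$ and the transversality order, hence ultimately on $M$ but in a controlled way). Choosing first $M$ large enough that $M+1 > a(M)$ — here one needs the elementary but essential fact that the exponent $a$ grows sub-linearly, or at any rate slower than $M$, in the transversality order — and then $\rho$ small enough depending on $C_M$, the hypothesis of the non-singular theorem is met.

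The conclusion of the non-singular theorem then gives, on the polydisc of size $\rho$, a set of real analytic KAM-tori with frequencies in $\omega_0 + O(\rho^2)\cdot DC(\kappa,\tau)$ whose complement has measure $\le C\rho^{b}$ times the measure of the polydisc, for some $b>0$; pulling back by $\Phi_M$ (which is a real analytic symplectomorphism close to the identity) and undoing the rescaling, these produce genuine real analytic KAM-tori for $X_H$ in the $\rho$-neighborhood of $0$, filling a proportion $\ge 1 - C\rho^{b}$ of the measure. Letting $\rho\to 0$ along a sequence gives both positive measure in every neighborhood of $0$ and density one at $0$, which is exactly the assertion of Theorem~\ref{mB}. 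The main obstacle, as in \cite{EFK}, is precisely the quantitative bookkeeping in the first two steps: one must track how the radius of convergence of the Birkhoff normalization, the analytic norm of the remainder, and the transversality constants all depend on $M$, and verify that there is a compatible choice of $(M,\rho)$ — this is where the absence of global action-angle coordinates near an elliptic fixed point forces the technical modifications relative to \cite{EFK}, and where one must be careful that the tori constructed do not all collapse onto the coordinate planes $\{x_j=y_j=0\}$ where the polar change of variables is singular.
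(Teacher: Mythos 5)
Your route is genuinely different from the paper's: you truncate the Birkhoff normal form at a high finite order $M$, pass to action--angle coordinates on coronas avoiding the planes $\{x_j=y_j=0\}$, rescale, and invoke the non-singular R\"ussmann theory as a black box. The paper never introduces action--angle variables at all: it proves a counter-term normal form theorem directly in the $(z,w)$-variables (Proposition \ref{counterthm}, resting on Theorem \ref{theo:5.1}), defines the frequency map by solving $\Omega(c)+\Lambda(c,\Omega(c))=0$ (Corollary \ref{bnfIII}), compares it with $\partial_r N_H$ up to a high order $q$, and then runs the transversality and measure argument (Lemmas \ref{transversality} and \ref{pyartly}, plus Lemma \ref{lemmaA1}) on a full polydisc in $c$. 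The point of that machinery is precisely to have all constants explicit and uniform in the scale, which is what your reduction needs but cannot simply quote.

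That is where your proposal has a genuine gap. The step ``R\"ussmann's non-singular KAM stability theorem at scale $\rho$ requires the perturbation to be smaller than some power $\rho^{a}$'' is not available in \cite{R} or \cite{Sev} in the form you use it; the paper explicitly notes these results ``do not apply per se'' to this singular limit. Concretely, after your rescaling the frequency map $\hat r\mapsto \partial_r N^{\le M}(\rho^2\hat r)$ is an $O(\rho^{2})$-perturbation of the \emph{constant} $\omega_0$, its R\"ussmann transversality amount degenerates like $\sigma\rho^{2p}$ ($p$ the transversality order), and the admissible Diophantine constant must be taken $\kappa\lesssim\rho^{2p}$ for the image to contain enough Diophantine vectors; so what you need is a parameterized KAM theorem whose smallness threshold depends \emph{polynomially}, with exponents independent of $M$ and $\rho$, on the transversality amount, on $\kappa$ and on the analyticity widths. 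Supplying exactly this uniformity is the hard part, and it is what the paper provides through the exponents $\alpha(s)$ and the threshold in Proposition \ref{counterthm}(i)--(ii) feeding Lemma \ref{pyartly}. Once such a quantitative statement is in hand your scheme does close, but note that your worry about an exponent $a(M)$ growing with $M$ is a confusion: $p$ is witnessed by finitely many Taylor coefficients of $N_H$, hence is fixed once $M$ exceeds that finite order, so $a$ is a constant and one simply chooses $M>a$ and then $\rho$ small. The remaining issue you flag --- that the tori must not collapse onto the coordinate planes and that the corona losses must be controlled --- is real but routine, and is handled in the paper by the $\epsilon$-corona argument of Lemma \ref{lemmaA1}.
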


\medskip

\subsection{Strategy of the proof}\label{strategy}

 We adopt a similar strategy to the one of \cite{EFK} that was inspired by previous works of Herman and Moser. The basis is a counter term KAM-theorem in which a Hamiltonian as in $(*)$ is conjugated, for any action value $c \sim 0\in\R^d$  and any $\omega$ in some fixed Diophantine class, to a Hamiltonian that has an invariant torus at the  action equal to $c$ with frequency $\omega$ up to a correction term $\Lambda(c,\o)$. Furthermore, for every value $c \in \R^d$ in the neighborhood of $0$ of the action variable there exists a unique frequency $\Omega(c)$ that   cancels the counter term : $\Lambda(c,\Omega(c))=0$. We call the map $c \mapsto \Omega(c)$ the {\it frequency map}.  If $\Omega(c)$ is Diophantine this yields an invariant KAM-torus with frequency $\Omega(c)$. One can show that the jets of the function $\Omega(c)$ are given by those of the gradient of the Birkhoff normal form when the latter is well defined (which is the case if $\o_0$ is irrational since we are dealing with fixed points).
The following alternative then holds : either the BNF is non degenerate and the function $\Omega$ takes Diophantine values on a set of Lebesgue density $1$ at $\cP_0$, which yields KAM stability; or the BNF is degenerate and  we can use the analytic dependance of the counter term on the action variable to show the existence of a direction (after a coordinate change in the action variable) that spans a complex analytic subvariety foliated by  complex analytic KAM-tori with  translation vector $\o_0$.

Let us briefly explain why we do not necessarily obtain real analytic tori  by our method in this context of elliptic fixed point.  In the normal form expression we look for a 
change of variable $Z$ defined 
 in a neighborhood of  the origin containing  the torus $\cT:=\{x_{j}^2+y_{j}^2=c_{j}^2, j=1,\ldots d\}$  such that 
\be H\circ Z(x,y)=\Gamma+\sum_{i=1}^d\frac{\omega_{j}}{2}(x_{j}^2+y_{j}^2-c_{j}^2) +\sum_{j=1}^d(x_{j}^2+y_{j}^2-c_{j}^2)F_{j}(x,y)\label{hameq}\ee
where $F=(F_{1},\ldots,F_{d})$ is null on $x^2+y^2-c^2=0$ ($\Gamma$ is a constant which is unimportant). The torus $\cT$ is then invariant by the flow of $H\circ Z$ (hence $Z^{-1}\cT$ is invariant by $H$).

Since $H$ is real analytic it has   a holomorphic extension to a polydisk $\bD_{\rho}^{2d}\subset \bC^{2d}$. Notice that  the extension of $H$ to $\bD_{\rho}^{2d}$ thus satisfies  $\overline{H(x,y)}=H(\bar x,\bar y)$.
It will be convenient  to make the following change of variables: for $x,y\in\bC^d$, define $z_{j}=\frac{1}{2}(x_{j}+\sqrt{-1}y_{j})$, $w_{j}=\frac{1}{2}(x_{j}-\sqrt{-1}y_{j})$ so that $r_{j}:=\frac{1}{2}(x_{j}^2+y_{j}^2)=z_{j}w_{j}$, $j=1,\ldots,d$.
One has $dz\wedge dw=-\sqrt{-1}dx\wedge dy$. 
Notice that a function $(x,y)\mapsto f(x,y)$ is real analytic (hence satisfies  $\overline{f(x,y)}=f(\bar x,\bar y)$) if and only if $\ti f(z,w):= f(x,y)$ satisfies the symmetry $\overline{\ti f(z,w)}=\ti f(\bar w, \bar z)$. We then  say that $\ti f$ is 
$\sigma$-{\it symmetric}  (where $\sigma$ is the involution $\sigma(z,w)=(\bar w,\bar z)$).
 If $f$ depends  real holomorphically  on an extra complex parameter $c$ and smoothly on an extra real parameter $\omega$,  then $\ti f :\bD^d_{\rho}\times\bD^d_{\rho}\times \bD^d_{\delta}\times B(0,1)\to\C$ satisfies
$\overline{{\ti  f}(z,w,c,\omega)}=\ti f(\bar w, \bar z,\bar c,\omega)$, i.e. it is symmetric with respect to the 
involution  $\sigma(z,w,c)=(\bar w,\bar z,\bar c)$. By a slight abuse of notation we shall call also
this property $\sigma$-{\it symmetry}. One can define a similar notion of $\s$-symmetry for diffeomorphisms (see Section \ref{sec:1.4}).

Equation (\ref{hameq}) is then equivalent to finding $\omega\in\R^{2d}$, $c\in\R^{d}$, an exact  symplectic change of coordinates $\ti Z$ for $dz\wedge dw$ and maps $\ti F_{j}$  null on $zw-c^2=0$ such that $\ti Z$ and the $\ti F_{j}$ are $\sigma$-symmetric and 
$$ \ti H\circ \ti Z(z,w)=\Gamma+\sum_{i=1}^d\omega_{j}(z_{j}w_{j}-c_{j}^2) +\sum_{j=1}^d(z_{j}w_{j}-c^2)\ti F_{j}(z,w).$$
The searched for torus $\cT$ then corresponds in the $(z,w)$-coordinates to  $\{z_{j}w_{j}=c_{j}^2,j=1,\ldots,d\}\cap \{(z,w)\in \bC^{2d}:\sigma(z,w)=(z,w)\}$.

The strategy of the proof is then to find for some values of $c\in\R^{d}$ and $\omega\in\R^{d}$ such a normal form. 

However, in the $j$-degenerate case ($j\ne 0$), it will only be possible to do so for some $c^2:=(c_{1}^2,\ldots,c_{d}^2)$ (but not necessarily $c$ itself)   in $\R^{d}$ and consequently it will not be possible to ensure that the searched for  tori $\cT$ are {\it real}.  We obtain instead  complex analytic KAM-tori for $X_H$.

\subsection{Notations}\label{sec:1.4}

We denote by $\bD_{\delta}^d$ the polydisk in $\bC^d$
with radius $\de$. More generally if $d=(d_1,\dots,d_n)$ and 
$\de=(\de_1,\dots,\de_n)$, then
$$\bD^d_\de=\bD^{d_1}_{\de_1}\times\dots\times  \bD^{d_n}_{\de_n}.$$

Let  $f: \bD^e_\de\to\bC$ be a holomorphic function.
We denote by $\p_{z_i} f$ the partial derivate
of $f$ with respect to $z_i$ and we use the usual multi-index
notation like $\p_{z}^{\al} f$.  If $z=(z',z'')$ we say that
$$f\in \OO^j(z')$$
if and only if $\p_{z'}^{\al'} f(\f,0,z'')=0$ for all $|\al'|<j$.
We shall also use the same notations for $\bC^n$-valued functions
$f=(f_1,\dots, f_n)$ with the absolute value replaced
by $|f|=\max_i |f_i|$ (or some other norm on $\C^n$).

\medskip

 {\it $\s$-symmetry.}
Let $\s$ be the involution $(z,w,c)\mapsto (\bar w,\bar z,\bar c)$ on $\bC^d\times\bC^d\times\bC^d$.
A holomorphic function $f: \bD_\de^d\times\bD_\de^d\times\bD_\de^d\to\bC$ is 
$\s$-{\it symmetric} if, and only if,  $f\circ \s(z,w,c)=\overline{ f(z,w,c)}$. 
This means precisely that it takes real values on
the subspace $\b{(z,w,c)=\s(z,w,c)}$. A (local) 
mapping $F$ preserves this subspace if  and only if 
$$\s^{-1}\circ F\circ \s=F$$
--  we say then  that $F$ is $\s$-symmetric.

We let $\cC^{\o,\sigma}( \bD_\de^d\times\bD_\de^d\times\bD_\de^d)$ denote the space
of $\s$-symmetric holomorphic functions $ f:\bD_\de^d\times\bD_\de^d\times\bD_\de^d\to\bC$
provided with the norm
$$|f|_{\de}=\sup_{z\in \bD^e_\de}|f(z)|.$$

\medskip

{\it Formal power series.} Let $z=(z_1,\dots,z_n)$. An element 
$$f\in \C[[z]]$$
is a formal power series
$$f=f(z)=\sum_{\al\in \bN^n} a_\al z^\al$$
whose coefficients $a_\al\in \C$ 
(possibly vector valued). The notion of $\s$-symmetry carries over
to this more general framework. We denote by
$$[f]_j(z)= \sum_{|\al|=j} a_\a z^\al,$$
the homogenous component of degre $j$, and
$$[f]^j=\sum_{i\le j}[f]_i.$$

\medskip

{\it Parameters.} Let $B$ be an open subset of some euclidean
space. Define 
$$\cC^{\w,\i}(\bD_\de^d\times\bD_\de^d\times\bD_\de^d,B)$$
(or for short $\cC^{\omega,\infty}_{\delta}$)
 to be the set of $\cC^{\i}$ functions (possibly vector valued)
$$f:  \bD_\de^d\times\bD_\de^d\times\bD_\de^d\times B\ni(z,\o)\mapsto f(z,\o)$$
such that for all $\o\in B$
\footnote{\ we apologize for the double use of $\o$}
$$f_\o: \bD_\de^d\times\bD_\de^d\times\bD_\de^d\ni (\o)\mapsto f(z,\o)$$
is a  holomorphic function.  If in addition, this map is $\s$-symmetric, we shall write $f\in \cC^{\omega,\sigma,\infty}(\bD_\de^d\times\bD_\de^d\times\bD_\de^d,B)$.  We define
$$||f||_{\de,s}=\sup_{|\al|\le s}|\p^\al_\o f_\o|_{\de}.$$

\bn 
{\it $(\kappa,\tau)$-{\it flat} functions.}
A $\cC^{\infty }$ function $f: \bD^d\times B\to \bC$, $(z,\omega)\mapsto f(z,\omega)$ is  $(\kappa,\tau)$-{\it flat}  
if,  for any set of indices $\a,\beta$,
$$\pa_{z}^\a\pa_{\om}^\beta f(z,\omega)=0$$
whenever  $\omega\in DC(\kappa,\tau)$.

\bn 
{\it Tensorial notations.} When  $(v_{1},\ldots,v_{m)}\mapsto  B(v_{1},\cdots,v_{m})$ is a $m$-multilinear form on a vector space $V$, we shall often see it as a linear form  on the $m$-th tensorial product $V^{\otimes m}$, and use the corresponding tensorial notations. Also, we denote by $\otimes_{sym}$ the symmetrized tensor product $v_{1}\otimes\cdots\otimes v_{m}=\sum_{\si}v_{\s(1)}\otimes\cdots\otimes v_{\s(m)}$ where the sum is on all the permutations of $\{1,\ldots,d\}$.

\bigskip

\section{Power series expansion}\label{sec:2}
\bigskip

\subsection{Expansion with   Non Resonant functions }
Let 
$$f(z,w)=\sum_{\a,\beta\in\N}f_{\a,\beta}z^\a w^\beta$$
be some holomorphic function defined on a polydisk of $(\C^2,0)$  --  or more generally a formal
power series. We have
$$f(z,w)=\sum_{n=0}^\infty (zw)^n\sum_{\a\beta=0}f_{\a,\beta}z^{\a}w^{\beta}$$ 
and since in the last sum in  the  previous expression $\a=0$ or $\beta=0$ we can find analytic $g_{n}$ and $h_{n}$, $n\in\Z$ such that
$$f(z,w)=\sum_{n=0}^\infty(zw)^n(g_n(z)+h_n(w)).$$
A similar procedure or a simple induction argument  show that if $f$ is now analytic in some polydisk  $\bD^{2d}_{\rho}$ of $(\C^{2d},0)$ then
$$f(z,w)=\sum_{\underline{n}\in\N^d}(z_{1}w_{1})^{n_{1}}\cdots(z_{d}w_{d})^{n_{d}}\sum_{(\a,\beta)\in\NR}f_{\underline{n},\a,\beta} z^\alpha w^\beta$$
where $\NR$ is the set of $(\a,\beta)\in(\N^d)^2$ such that  for all $i=1,\ldots,d$, $\a_{i}\beta_{i}=0$.
A power series of the form $h(z,w)=\sum_{(\a,\beta)\in\NR}h_{\a,\beta}z^\a w^\beta$ will be called {\it non-resonant} and we denote by $\widehat{\NR}$ the vector space  of all  non-resonant  functions.  Notice that we allow for the existence of constant terms in this definition. We can also say that 
$$f(z,w)=\sum_{\underline{n}\in\N^d}(z_{1}w_{1})^{n_{1}}\cdots(z_{d}w_{d})^{n_{d}}\sum_{\underline{\e}\in\{0,1\}^d}f_{\underline{n},\underline{\e}}(r_{1}^{\e_{1}},\ldots,r_{d}^{\e_{d}})$$
where $f_{\underline{n},\underline{\e}}$ are  holomorphic in $\bD_{\rho}^{2d}$
 and where we have used the notation $r_{i}^\e=z_{i}$ if $\e=0$ and $r_{i}^\e=w_{i}$ if $\e=1$. 

The following fact will be useful:
\begin{Lem}\label{NR}For any $(\a,\beta)\in (\N^d)^2$ there is  a {\it unique} decomposition of the form $(\a,\beta)=(\und{n},\und{n})+(\a',\beta')$ where $\und{n}\in\N^d$ and $(\a',\beta')\in\NR$ (this means that for any $i=1,\ldots,d$, $\a_{i}=\und{n}_{i}+\a_{i}'$ and $\beta_{i}=\und{n}_{i}+\beta'_{i}$).
\end{Lem}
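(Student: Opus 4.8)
The plan is to reduce the whole statement to a one-dimensional bookkeeping fact, since both the shape of the asserted decomposition and the defining condition of $\NR$ factor as products over the index $i\in\{1,\dots,d\}$. Writing $(\a,\beta)=(\und n,\und n)+(\a',\beta')$ coordinatewise amounts, for each fixed $i$, to choosing $\und n_i\in\N$ with $\a_i-\und n_i\ge0$, $\beta_i-\und n_i\ge0$, and (the coordinate-$i$ instance of the non-resonance constraint) $(\a_i-\und n_i)(\beta_i-\und n_i)=0$. Hence it suffices to show: for every $(a,b)\in\N^2$ there is a \emph{unique} $n\in\N$ with $a-n\ge0$, $b-n\ge0$ and $(a-n)(b-n)=0$.

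For existence I would take $n=\min(a,b)$: then $a-n$ and $b-n$ belong to $\N$ and at least one of them is zero, so their product vanishes. For uniqueness, let $n$ have the three properties; nonnegativity of $a-n$ and $b-n$ gives $n\le\min(a,b)$, while if $n<\min(a,b)$ then $a-n>0$ and $b-n>0$, so $(a-n)(b-n)>0$, a contradiction; therefore $n=\min(a,b)$ is forced.

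Reassembling the coordinates one sets $\und n:=(\min(\a_i,\beta_i))_{1\le i\le d}\in\N^d$, $\a':=\a-\und n$ and $\beta':=\beta-\und n$; by the one-dimensional claim applied in each coordinate these lie in $\N^d$, satisfy $\a_i'\beta_i'=0$ for all $i$ (so $(\a',\beta')\in\NR$), and this is the only decomposition of the required form. This is precisely the combinatorial identity that makes the displayed rewriting of $f(z,w)$ just above well defined and unique: each monomial $z^\a w^\beta$ is attached to exactly one pair $\big(\und n,(\a',\beta')\big)$.

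There is essentially no obstacle here: the content is a purely elementary fact about pairs of nonnegative integers. The only point worth a moment's care is to derive both inequalities $n\le\min(a,b)$ (from nonnegativity) and $n\ge\min(a,b)$ (from the vanishing of the product), so that $n$ is pinned down exactly and one genuinely obtains uniqueness, not merely existence.
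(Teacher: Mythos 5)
Your proof is correct and follows essentially the same route as the paper: existence by taking $\und n_i=\min(\a_i,\beta_i)$ in each coordinate, and uniqueness by noting that a smaller choice of $\und n_i$ would leave both $\a_i'$ and $\beta_i'$ strictly positive, violating the non-resonance condition. The coordinatewise reduction you make explicit is implicit in the paper's argument, so there is no substantive difference.
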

\begin{proof} 

To prove the  existence of such a decomposition just  take $\und{n}_{i}=\min(\a_{i},\beta_{i})$, $i=1,\ldots,d$. To prove  uniqueness  we observe that  if for some $i$ $\und{n}_{i}\neq \und{\ti n}_i$, for example $\und{n}_{i}> \und{\ti n}_i$, then $\a'_i>\ti\a_i$ and $\beta'_i> \ti\beta_i$ a contradiction with $(\a',\beta')\in \NR$.

\end{proof}
The preceding discussion provides the following  decomposition 
\begin{Lem}\label{uniqueness}If $f(z,w)$ is holomorphic on some polydisk  $\bD_\de ^{d}\times \bD_\de ^{d}$ there exists a unique decomposition 
\be f(z,w)=\sum_{\underline{n}\in\N^d}(z_{1}w_{1})^{n_{1}}\cdots(z_{d}w_{d})^{n_{d}}\ti f_{\underline{n}}(z,w)\label{e*}\ee
where $\ti f_{\underline{n}}\in \widehat{\NR}$ are  holomorphic on $\bD_\de ^{d}\times \bD_\de ^{d}$  -- the series converges uniformly on compact sub domains of $\bD_\de ^{d}\times \bD_\de ^{d}$. Furthermore $f$ is $\sigma$-symmetric if and only if all the $\ti f_{\und{n}}$ are.
\end{Lem}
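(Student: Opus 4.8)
The plan is to reduce the claimed decomposition to the purely algebraic statement of Lemma \ref{NR} applied coefficientwise, and then to verify the analytic convergence claims using the fact that $f$ is holomorphic on the closed polydisk (after a harmless shrinking of $\de$). First I would write the Taylor expansion $f(z,w)=\sum_{(\a,\be)\in(\N^d)^2}f_{\a,\be}\,z^\a w^\be$, convergent on $\bD_\de^d\times\bD_\de^d$. By Lemma \ref{NR}, each pair $(\a,\be)$ has a \emph{unique} decomposition $(\a,\be)=(\und n,\und n)+(\a',\be')$ with $\und n\in\N^d$ and $(\a',\be')\in\NR$, namely $\und n_i=\min(\a_i,\be_i)$. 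Grouping the monomials according to the value of $\und n$ gives
\be
f(z,w)=\sum_{\und n\in\N^d}(z_1w_1)^{n_1}\cdots(z_dw_d)^{n_d}\,\ti f_{\und n}(z,w),\qquad
\ti f_{\und n}(z,w):=\sum_{(\a',\be')\in\NR}f_{(\und n,\und n)+(\a',\be')}\,z^{\a'}w^{\be'},
\ee
and by construction each $\ti f_{\und n}$ lies in $\widehat{\NR}$. This already forces uniqueness: if $f=\sum_{\und n}(zw)^{\und n}\,g_{\und n}$ with $g_{\und n}\in\widehat{\NR}$, then expanding each $g_{\und n}$ in its Taylor series and using that a monomial $z^\a w^\be$ appears in $(zw)^{\und n}g_{\und n}$ only when $\min(\a_i,\be_i)=n_i$ for all $i$ (by the $\NR$ property of $g_{\und n}$), we can match Taylor coefficients of $f$ on both sides and conclude $g_{\und n}=\ti f_{\und n}$ term by term, again invoking the uniqueness part of Lemma \ref{NR}.

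Next I would address holomorphy and convergence. Fix $0<\de'<\de$. Since the Taylor series of $f$ converges absolutely and uniformly on $\bD_{\de'}^d\times\bD_{\de'}^d$, the subseries defining each $\ti f_{\und n}$ converges there, so each $\ti f_{\und n}$ is holomorphic on $\bD_\de^d\times\bD_\de^d$ (the choice of $\de'<\de$ being arbitrary). For the convergence of the outer series: on $\bD_{\de'}^d\times\bD_{\de'}^d$ one has $|(z_1w_1)^{n_1}\cdots(z_dw_d)^{n_d}\,\ti f_{\und n}(z,w)|\le \sum_{(\a',\be')\in\NR}|f_{(\und n,\und n)+(\a',\be')}|\,(\de')^{|\und n|+|\und n|+|\a'|+|\be'|}$, and summing this over $\und n\in\N^d$ and $(\a',\be')\in\NR$ recovers exactly $\sum_{(\a,\be)}|f_{\a,\be}|(\de')^{|\a|+|\be|}<\infty$ by the bijectivity of the decomposition in Lemma \ref{NR}. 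Hence the outer series converges absolutely and uniformly on every compact subdomain of $\bD_\de^d\times\bD_\de^d$, as claimed. (This is exactly the reindexing already sketched in the paragraph preceding the lemma, made rigorous.)

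Finally, for the $\s$-symmetry statement: $\s$ acts on the $(z,w)$ variables by $(z,w)\mapsto(\bar w,\bar z)$, and on monomials this sends $z^\a w^\be$ to $\overline{z^\be w^\a}$; correspondingly $\s$-symmetry of $f$ is equivalent to $\overline{f_{\a,\be}}=f_{\be,\a}$ for all $(\a,\be)$. Because the decomposition $(\a,\be)\mapsto(\und n,(\a',\be'))$ is symmetric under swapping $\a\leftrightarrow\be$ (it sends $\und n$ to itself and $(\a',\be')$ to $(\be',\a')$, and $\NR$ is stable under this swap), the condition $\overline{f_{\a,\be}}=f_{\be,\a}$ holds for all $(\a,\be)$ if and only if $\overline{f_{(\und n,\und n)+(\a',\be')}}=f_{(\und n,\und n)+(\be',\a')}$ for all $\und n$ and all $(\a',\be')\in\NR$, i.e. if and only if every $\ti f_{\und n}$ is $\s$-symmetric. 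This proves the equivalence.

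I expect the only genuinely delicate point to be the bookkeeping that makes the reindexing of the double sum legitimate — i.e. checking that the map $(\a,\be)\mapsto(\und n,\a',\be')$ is a bijection onto $\N^d\times\NR$ with $|\a|+|\be|=2|\und n|+|\a'|+|\be'|$, so that absolute convergence is preserved. This is precisely the content of Lemma \ref{NR}, so the work is essentially already done; the rest is routine.
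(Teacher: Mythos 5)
Your proof is correct and follows essentially the same route as the paper: existence by grouping Taylor monomials according to the unique decomposition of Lemma \ref{NR} (which is what the discussion preceding the lemma does), uniqueness by coefficient comparison via Lemma \ref{NR}, with the $\s$-symmetry claim handled by an equivalent coefficientwise version of the paper's ``it follows from uniqueness'' argument. The extra care you take with absolute convergence and the reindexing of the double series is a welcome but routine fleshing-out of what the paper leaves implicit.
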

\begin{proof}
To prove uniqueness, one just have to prove that if $f$ is null, the same is true of all the series $f_{\und{n}}$. This is done by looking at the coefficients of the right hand side of (\ref{e*}) and by using Lemma \ref{NR}. The $\sigma$-symmetry of the $f_{\underline{n}}$ comes  from the uniqueness. 
\end{proof}
If now $f$ depends (or not) on a parameter $c=(c_{1},\ldots,c_{d})$, by writing $z_{j}w_{j}=c_{j}+(z_{j}w_{j}-c_{j})$ in (\ref{e*}) we get   an expansion uniformly  converging on small compact neighborhoods of 0:
\begin{equation}f(z,w,c)=\sum_{\underline{n}\in\N^d}(z_{1}w_{1}-c_{1})^{n_{1}}\cdots(z_{d}w_{d}-c_{d})^{n_{d}}f_{\underline{n}}(z,w,c).\label{A.exp}\end{equation}
We again notice that  each $f_{\underline{n}}(\cdot,\cdot,c)$ is non-resonant (for any fixed $c$) and 
\be f_{\underline{n}}(z,w,c)=\sum_{\underline{k}\geq \underline{n}}{\underline{k}\choose\underline{n}}c^{(\underline{k}-\underline{n})}\ti f_{\underline{k}}(z,w,c).\label{9candec}\ee
We shall still denote by $\widehat {\cN\cR}$ the set of functions $f(z,w,c)$ which are non-resonant for each fixed $c$.

\begin{Lem}\label{l10.4}If $f$ is $\sigma$-symmetric, there exists a unique  decomposition of the form (\ref{A.exp}) where each $f_{\underline{n}}$  is non-resonant and   $\sigma$-symmetric.
\end{Lem}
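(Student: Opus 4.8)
The plan is to deduce the $\s$-symmetry of the $f_{\underline n}$ from the uniqueness already contained in Lemma \ref{uniqueness}, exploiting that each factor $z_jw_j-c_j$ is well behaved under $\s$. First I would make explicit the point that is only implicit in the discussion preceding the statement: the decomposition (\ref{A.exp}) with all $f_{\underline n}(\cdot,\cdot,c)\in\widehat{\NR}$ is \emph{unique}. Indeed, given $f$ the coefficients $\tilde f_{\underline k}$ of the expansion (\ref{e*}) are uniquely determined by Lemma \ref{uniqueness}; conversely, from any decomposition of the form (\ref{A.exp}) one recovers the $\tilde f_{\underline k}$ by expanding each $(z_jw_j-c_j)^{n_j}$ in powers of $z_jw_j$ and regrouping (the series being absolutely convergent near $0$), so that the triangular relation (\ref{9candec}) must hold and the $f_{\underline n}$ are thereby forced. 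Equivalently, one argues directly on Taylor coefficients exactly as in the proof of Lemma \ref{uniqueness}, using Lemma \ref{NR} to see that the monomials $(z_1w_1)^{m_1}\cdots(z_dw_d)^{m_d}z^\a w^\beta$ with $(\a,\beta)\in\NR$ occupy pairwise distinct slots.

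Next comes the key observation: since $(z_jw_j-c_j)\circ\s=\bar w_j\bar z_j-\bar c_j=\overline{z_jw_j-c_j}$, the product $P_{\underline n}(z,w,c):=\prod_j(z_jw_j-c_j)^{n_j}$ satisfies $P_{\underline n}\circ\s=\overline{P_{\underline n}}$. I would then introduce the \emph{holomorphic} function $g(z,w,c):=\overline{f(\bar w,\bar z,\bar c)}=\overline{f(\s(z,w,c))}$; by definition, $f$ is $\s$-symmetric precisely when $g=f$. Substituting $\s(z,w,c)$ into (\ref{A.exp}) and taking complex conjugates (legitimate termwise for the convergent series) yields
$$g(z,w,c)=\sum_{\underline n\in\N^d}P_{\underline n}(z,w,c)\,g_{\underline n}(z,w,c),\qquad g_{\underline n}(z,w,c):=\overline{f_{\underline n}(\bar w,\bar z,\bar c)}.$$
Each $g_{\underline n}$ is holomorphic, and it is non-resonant for every fixed $c$: if $f_{\underline n}(z,w,c)=\sum_{(\a,\beta)\in\NR}a_{\a\beta}(c)\,z^\a w^\beta$, then $g_{\underline n}(z,w,c)=\sum_{(\a,\beta)\in\NR}\overline{a_{\a\beta}(\bar c)}\,z^\beta w^\a$, and $(\beta,\a)\in\NR$ whenever $(\a,\beta)\in\NR$ because the defining condition $\a_i\beta_i=0$ is symmetric in $\a$ and $\beta$.

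Finally, when $f$ is $\s$-symmetric we have $f=g$, so the two decompositions $\sum_{\underline n}P_{\underline n}f_{\underline n}$ and $\sum_{\underline n}P_{\underline n}g_{\underline n}$ have non-resonant coefficients and equal sums; by the uniqueness established in the first step, $f_{\underline n}=g_{\underline n}$ for all $\underline n$, i.e. $f_{\underline n}(z,w,c)=\overline{f_{\underline n}(\bar w,\bar z,\bar c)}$, which is exactly the $\s$-symmetry of $f_{\underline n}$. This gives the existence of the asserted decomposition; its uniqueness is immediate, \emph{a fortiori}, from the uniqueness among all non-resonant families. I expect the only genuinely delicate point to be the rigorous justification of the uniqueness in the first step — namely justifying the regrouping of the (near-$0$ absolutely convergent) multiple series, or passing to the formal-power-series statement — but this is the same bookkeeping as in Lemma \ref{uniqueness} and transfers without change.
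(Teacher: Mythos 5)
Your overall architecture is sound and, in its second half, is exactly the paper's: the paper likewise obtains the $\sigma$-symmetry of the $f_{\und n}$ as an immediate consequence of uniqueness, and your spelling-out of that step (that $(z_jw_j-c_j)\circ\sigma=\overline{z_jw_j-c_j}$, and that $g_{\und n}=\overline{f_{\und n}\circ\sigma}$ is again non-resonant because the condition $\alpha_i\beta_i=0$ is symmetric in $\alpha$ and $\beta$) is correct. The gap is in the first half, and it sits exactly where the paper spends its entire displayed computation: the uniqueness of (\ref{A.exp}) itself. After regrouping you obtain, for every $\und k$, the relation $\ti f_{\und k}=\sum_{\und n\geq\und k}\binom{\und n}{\und k}(-c)^{\und n-\und k}f_{\und n}$; but to conclude that the $f_{\und n}$ are ``thereby forced'' you must show that this infinite unipotent triangular system has trivial kernel, i.e.\ that $\sum_{\und n\geq\und k}\binom{\und n}{\und k}(-c)^{\und n-\und k}f_{\und n}=0$ for all $\und k$ implies $f_{\und n}=0$. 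Asserting that (\ref{9candec}) ``must hold'' begs this question, since (\ref{9candec}) is precisely the inverse relation whose validity for an arbitrary competing decomposition is what has to be proved. Your fallback (``argue exactly as in Lemma \ref{uniqueness}, the slots are pairwise distinct'') does not suffice either: Lemma \ref{NR} only separates the $(z,w)$-monomials, whereas the new difficulty in the $c$-dependent case is that powers of $c$ can be shifted between the factor $(zw-c)^{\und n}$ and the coefficient $f_{\und n}(z,w,c)$; for a fixed non-resonant slot the coefficient of $z^{\und k+\alpha'}w^{\und k+\beta'}$ is a genuinely infinite sum over $\und n\geq\und k$, and its vanishing does not termwise kill the $f_{\und n}$. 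This mixing is exactly what the paper's proof addresses, by extracting that coefficient and then using the identity obtained on multiplying by $(d+c)^{\und k}$ and summing over $\und k$.

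The missing step is fillable in a few lines, so your route is salvageable: writing $f_{\und n}=\sum_{\gamma}f_{\und n,\gamma}(z,w)\,c^{\gamma}$, the coefficient of $c^{\gamma}$ in the $\und k$-th relation reads $\sum_{0\leq\und m\leq\gamma}(-1)^{|\und m|}\binom{\und k+\und m}{\und k}f_{\und k+\und m,\gamma-\und m}=0$; for $|\gamma|=0$ this gives $f_{\und k,0}=0$, and by induction on $|\gamma|$ all terms with $\und m\neq 0$ vanish, forcing $f_{\und k,\gamma}=0$ (off the diagonal the entries are divisible by increasing powers of $c$, which is what makes the unipotent system invertible). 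With this inserted, and with the regrouping performed on Taylor coefficients (where each monomial receives only finitely many contributions, so no absolute-convergence discussion is needed), your proof becomes a correct, mild variant of the paper's: you reduce to Lemma \ref{uniqueness} plus a triangular inversion, while the paper does the coefficient extraction and the inversion in one stroke. Note finally that your closing paragraph mislocates the delicate point: it is not the convergence bookkeeping, which is indeed as in Lemma \ref{uniqueness}, but the inversion step above, which has no counterpart in the $c$-independent case.
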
 
\begin{proof}We have to prove that if in (\ref{A.exp}) $f$ is equal to 0 then all the $f_{\underline{n}}$ are null.  If $f_{\und{n}}(z,w,c)=\sum_{\a',\beta',\gamma'}f_{\und{n},\a',\beta',\gamma'}z^{\a'}w^{\beta'}c^{\gamma'}$, the coefficient of $z^\a w^\beta$ in  (\ref{A.exp})  is the sum 
$$\sum_{\ntop{\underline{k},\a',\beta',\gamma'}{(\underline{k},\underline{k})+(\a',\beta')=(\a,\beta)}}\sum_{\underline{n}\geq \underline{k}} {\underline{n}\choose \underline{k}}(-1)^{|\und{n}-\und{k}|}c^{\gamma'+(\underline{n}-\underline{k})}f_{\underline{n},\a',\beta',\gamma'}$$
Since in the last sum  $(\a',\beta')\in\NR$, the decomposition $(\a,\beta)=(\underline{k},\underline{k})+(\a',\beta')$ is unique by Lemma \ref{NR}, and thus the last sum is just (the summation is in $\underline{n}$)
$$\sum_{\ntop{\underline{n}\geq \underline{k}}{\gamma'}} {\underline{n}\choose \underline{k}}(-1)^{|\und{n}-\und{k}|}c^{\gamma'+(\underline{n}-\underline{k})}f_{\underline{n},\a',\beta',\gamma'}.$$
By assumption, for any $\und{k}$, any $(\a',\beta')\in\NR$,  this has to be equal to zero for any $c$ in a neighborhood of zero. Multiplying the last sum by $(d+c)^{\und{k}}$ and making the summation on all $\und{k}\geq 0$ one gets
$$\sum_{\ntop{\und{n}}{\gamma'}} c^{\gamma'}d^{\und{n}}f_{\underline{n},\a',\beta',\gamma'}=0.$$
This being true for all $c,d$ in a neighborhood of 0 one has $f_{\und{n}}=0$.

The $\sigma$-symmetry of the $f_{\underline{n}}$ comes  from the uniqueness. 

\end{proof}

\noindent{\bf Remark} Lemmas  \ref{uniqueness} and \ref{l10.4} hold in the case of formal series in $\C[[z,w,c]]$.
\bigskip

If $p\in\N$, we shall denote 
\begin{equation} f_{p}(z,w,c)=\sum_{|\underline{n}|=p}(zw-c)^{\underline{n}}f_{\underline{n}}(z,w,c)\label{A.4}\end{equation}

We shall use the following notations. We have seen that  $f(z,w,c)$ can be   written under the form
\begin{multline}f(z,w,c)=f^{(0)}(z,w,c)+\<f^{(1)}(z,w,c),(zw-c)\>+\\
\<(zw-c),f^{[2]}(z,w,c)(zw-c)\>\label{candec1}\end{multline}
or 
\begin{multline}f(z,w,c)=f^{(0)}(z,w,c)+\<f^{(1)}(z,w,c),(zw-c)\>+\\
\<(zw-c),f^{(2)}(z,w,c)(zw-c)\>+f^{[3]}(z,w,c)(zw-c)^{\otimes 3}\label{candec2}\end{multline}
where  $f^{(0)}(z,w,c)$ and $f_{j}^{(1)}(z,w,c)$ ($j=1,\ldots,d$), $f_{i,j}^{(2)}(z,w,c)$ ($1\leq i,j\leq d$)  are in $\widehat{\NR}$  and 
where the notations   $f^{(2)}_{ij}(z,w,c)$ and $f^{[2]}_{ij}(z,w,c)$ ($1\leq i,j\leq d$)  denotes respectively
the sums      $\sum_{\underline{n}=\lambda_{i}+\lambda_{j}}f_{\underline{n}}(z,w,c)$  and $\sum_{\underline{n}\geq \lambda_{i}+\lambda_{j}}(zw-c)^{\underline{n}-\lambda_{i}-\lambda_{j}}f_{\underline{n}}(z,w,c)$
with $\lambda_{k}\in\N^d$ ($k=i,j$) denoting
 the multiindex $\lambda_{k}(l)=\delta_{kl}$ (Kronecker's symbol); $f^{[(3)]}$ is defined similarly. 
We shall call the decompositions (\ref{candec1}) and (\ref{candec2}) the {\it canonical decomposition} of $f$ (up to order 2 or 3).

\begin{Lem}\label{uniquedecomp}If  \begin{multline*}f(z,w,c)=a_{0}(z,w,c)+\<a_{1}(z,w,c),(zw-c)\>+\cO((zw-c)^2)\end{multline*}
with $a_{0}$ and $a_{1}$ in $\widehat{\cN\cR}$ then $a_{0}=f^{(0)}$ and $a_{1}=f^{(1)}$. 
\end{Lem}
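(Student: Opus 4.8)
The plan is to prove uniqueness by reducing to the already–established uniqueness of the canonical decomposition from Lemmas \ref{uniqueness} and \ref{l10.4}. Suppose $f$ is written as in the statement,
\[
f(z,w,c)=a_{0}(z,w,c)+\<a_{1}(z,w,c),(zw-c)\>+\cO((zw-c)^{2}),
\]
with $a_{0},a_{1}\in\widehat{\cN\cR}$. Subtracting the canonical decomposition (\ref{candec1}) of $f$, we get
\[
(a_{0}-f^{(0)})+\<a_{1}-f^{(1)},(zw-c)\>=\cO((zw-c)^{2}).
\]
So it suffices to show: if $b_{0},b_{1}\in\widehat{\cN\cR}$ and $b_{0}(z,w,c)+\<b_{1}(z,w,c),(zw-c)\>\in\cO((zw-c)^{2})$, then $b_{0}=0$ and $b_{1}=0$. (Here $b_{0}=a_{0}-f^{(0)}$, and each component $b_{1,j}=a_{1,j}-f^{(1)}_{j}$ is a \emph{difference} of non-resonant functions, hence itself non-resonant since $\widehat{\cN\cR}$ is a vector space.)

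First I would apply the expansion (\ref{A.exp}) to $g:=b_{0}+\<b_{1},(zw-c)\>$ itself: by Lemma \ref{l10.4} (or its $\C[[z,w,c]]$ version, valid by the Remark) $g$ has a \emph{unique} decomposition $g=\sum_{\und n}(zw-c)^{\und n}g_{\und n}$ with each $g_{\und n}\in\widehat{\cN\cR}$. On the other hand, reading off from $b_{0}+\<b_{1},(zw-c)\>$ directly, the degree-$0$ term in powers of $(zw-c)$ is $b_{0}$, which is non-resonant; the degree-$1$ term is $\sum_{j}(z_{j}w_{j}-c_{j})b_{1,j}$ with each $b_{1,j}$ non-resonant; and by hypothesis there are no further terms, i.e. the degree $\ge 2$ part vanishes. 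Thus $g_{\underline 0}=b_{0}$, $g_{\lambda_{j}}=b_{1,j}$, and $g_{\und n}=0$ for $|\und n|\ge 2$. But the hypothesis $g\in\cO((zw-c)^{2})$ means precisely (by the same uniqueness, applied to $g-0$) that $g_{\underline 0}=0$ and $g_{\lambda_{j}}=0$ for all $j$. Hence $b_{0}=0$ and all $b_{1,j}=0$, which gives $a_{0}=f^{(0)}$ and $a_{1}=f^{(1)}$.

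The one point that needs a little care — and is the only real obstacle — is the passage between "$g\in\cO((zw-c)^{2})$" as an analytic/formal condition and the statement "$g_{\underline 0}=0$ and $g_{\lambda_j}=0$ in the decomposition (\ref{A.exp})". This is exactly the content of the uniqueness half of Lemma \ref{l10.4}: two decompositions of the form (\ref{A.exp}) with non-resonant coefficients must have equal coefficients, so the decomposition with $g_{\underline 0}=b_0$, $g_{\lambda_j}=b_{1,j}$ must coincide term by term with the one coming from writing $g$ as something that is $O((zw-c)^2)$, i.e. with all low-order coefficients zero. Concretely one can rerun the coefficient-comparison argument in the proof of Lemma \ref{l10.4}: the coefficient of a fixed monomial $z^{\a}w^{\beta}$ with $(\a,\beta)\in\NR$ forces $b_{0}$'s coefficients to vanish, and the coefficient of $z^{\a}w^{\beta}$ with $(\a,\beta)=\lambda_{j}+(\a',\beta')$, $(\a',\beta')\in\NR$, forces $b_{1,j}$'s coefficients to vanish, using Lemma \ref{NR} to guarantee that these matchings of multi-indices are unambiguous. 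No $\sigma$-symmetry is needed for the statement, though if one wants the decomposition respecting $\sigma$-symmetry it is inherited automatically from uniqueness, exactly as in the earlier lemmas.
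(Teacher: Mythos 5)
Your proof is correct and follows essentially the same route as the paper: both rest on expanding the $\cO((zw-c)^2)$ remainder as $\sum_{|\und m|\ge 2}(zw-c)^{\und m}g_{\und m}$ with non-resonant $g_{\und m}$ and then invoking the uniqueness of the decomposition from Lemma \ref{l10.4}. The only (cosmetic) difference is that you first subtract the canonical decomposition and apply uniqueness to the difference, whereas the paper applies the uniqueness directly to the two decompositions of $f$ itself.
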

\begin{proof} Let us denote $g(z,w,c)$ the $\cO((zw-c)^2)$ of the statement of the lemma.  The function $g$ can be written 
$$g(z,w,c)=\sum_{\a\in\N^d,|\a|=2}h_{\a}(z,w,c)(zw-c)^{\a}$$ and each $h_{\a}$ can be decomposed $h_{\a}(z,w,c)=\sum_{\underline{n}\in\N^d} h_{\a,\underline{n}}(z,w,c)(zw-c)^{\underline{n}}$ where all the $h_{\a,\underline{n}}$ are in $\widehat{\cN\cR}$ and so $g(z,w,c)=\sum_{\underline{m}\in \N^d,|\underline{m}|\geq 2}(zw-c)^{\underline{m}}g_{\underline{m}}(z,w,c)$ where each $g_{\underline{m}}(z,w,c):=\sum_{\a+\underline{n}=m,|\a|=2,\underline{n}\in\N^d}h_{\a,n}(z,w,c)$ is non resonant. The uniqueness given by Lemma \ref{l10.4} concludes the proof. 
\end{proof}

\subsection{The operators $\cM$, $\cD$ and $\cD^\omega$}\label{subsec:2.2}

We now define the  operator $\cM$ by 
$$(\cM f)(z,w)=\sum_{\a=\beta}f_{\a,\beta}z^\alpha w^\beta$$
(diagonal terms). If $\cM f=f$ we say that $f$ is {\it diagonal}.

Observe that if $f(z,w,c)=\sum_{\underline{n}\in\N^d}(zw-c)^{\und{n}}f_{\underline{n}}(z,w,c)$ where all the $f_{\und{n}}$ are in $\widehat\NR$ then 
$$\cM f=\sum_{\underline{n}\in\N^d}(zw-c)^{\und{n}}f_{\underline{n}}(0,0,c).$$

Let us introduce  the following  differential operators
$$\cD_{i} f=(\pa_{z_{i}}f)z_{i}-(\pa_{w_{i}}f)w_{i},\qquad \cD f=(\cD_{1}f,\ldots,\cD_{d}f),$$
and if $\omega\in\R^d$
$$\cD^{\omega}=\<\omega,\cD\>=\omega_{1}\cD_{1}+\cdots+\omega_{d}\cD_{d}.$$
Notice that 
$$\cD_{i}(z^\a w^\beta)=(\a_{i}-\beta_{i})(z^\a w^\beta),\qquad \cD^{\omega}(z^\a w^\beta)=\<\omega,\a-\beta\>(z^\a w^\beta).$$
All these definitions extend to the case when $f(z,w,c)$ depends on $c$; the derivatives are taken w.r.t. $(z,w)$ and $c$ is then seen as a parameter.

Since 
$\cD(zw-c)^{\underline n}=0$, we observe that 
\begin{equation}\cD f_{p}=\sum_{|\underline{n}|=p}(zw-c)^{\underline{n}}\cD f_{\underline{n}}(z,w,c), \qquad \cD^{\omega}f_{p}=\sum_{|\underline{n}|=p}(zw-c)^{\underline{n}}\cD^{\omega} f_{\underline{n}}(z,w,c).\label{A.5}\end{equation}

Let us mention the following, easy to prove, but important properties:
\begin{Lem}\label{cohom}Let $f(z,w,c)$ be a formal series expansion.
\begin{enumerate}
\item \label{item1}If $f$ is diagonal, so are $z\pa_{z}f$, $w\pa_{w}f$, $\cD f$ and $\cD^\omega f$.
\item\label{item2} If  $[\cdot]_{j}$ denotes the homogeneous polynomial part of total degree $j$ in $(z,w,c)$, the operators $z\pa_{z}$, $w\pa_{w}$, $\cD$, $\cD^\omega$ commute with $[\cdot]_{j}$.
\item \label{item3}If $g$ is either  $z\pa_{z}f$, $w\pa_{w}f$, $\cD f$ or $\cD^\omega f$ then $[g]_{0}=0$.
\item\label{item'4} If $f$ is $\s$-symmetric then $\cM f$, $\sqrt{-1}\cD f$ and $\sqrt{-1}\cD^\omega f$ are $\s$-symmetric.
\item \label{item4} Assume that $\omega\in\R^d$ is irrationaland let $g\in\bC[[z,w,c]]$ be a
$\sigma$-symmetric power series. Then the equation 
$$\cD^\omega f=\sqrt{-1}g$$
has a $\sigma$-symmetric solution $f\in \bC[[z,w,c]]$ if and only if $\cM g=0$, and the solution $f$ is unique modulo the addition of any diagonal series expansion in $\bC[[z,w,c]]$. 
\end{enumerate}
\end{Lem}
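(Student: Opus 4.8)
The plan is to treat each item in turn, the first four being essentially formal verifications and the last one — the cohomological equation \eqref{item4} — being the substantial point. For items \eqref{item1}--\eqref{item3}: everything is monomial-wise, so it suffices to test on a monomial $z^\a w^\beta c^\gamma$. A monomial is diagonal iff $\a=\beta$; applying $z_i\p_{z_i}$ multiplies it by $\a_i$, $w_i\p_{w_i}$ by $\beta_i$, and $\cD_i$ by $\a_i-\beta_i$, none of which changes the condition $\a=\beta$, giving \eqref{item1}. Each of these operators preserves total degree in $(z,w,c)$ (multiplication by a scalar), so they commute with the projection $[\cdot]_j$, giving \eqref{item2}. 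On a degree-$0$ monomial (a constant) each operator returns $0$, giving \eqref{item3}. For \eqref{item'4}, recall from Section \ref{sec:1.4} that $\sigma$-symmetry of a power series $f=\sum a_{\a\beta\gamma}z^\a w^\beta c^\gamma$ is the statement $\overline{a_{\a\beta\gamma}}=a_{\beta\a\gamma}$; then $\cM f$ keeps only the coefficients with $\a=\beta$, for which this condition reads $\overline{a_{\a\a\gamma}}=a_{\a\a\gamma}$, i.e. $\cM f$ is $\sigma$-symmetric. For $\sqrt{-1}\,\cD^\omega f$ one computes that its $(\a,\beta,\gamma)$-coefficient is $\sqrt{-1}\,\langle\omega,\a-\beta\rangle a_{\a\beta\gamma}$; conjugating and swapping $\a\leftrightarrow\beta$ multiplies by $-1$ twice (once from $\sqrt{-1}\mapsto-\sqrt{-1}$, once from $\a-\beta\mapsto\beta-\a$, using $\omega$ real), so the symmetry is restored; likewise for each $\sqrt{-1}\,\cD_i f$ — though here one should be a bit careful, since $\sqrt{-1}\,\cD_i f$ alone need not be $\sigma$-symmetric unless one remembers that the relevant symmetry for the $\cD$-components is with respect to a combined involution; in any case $\sqrt{-1}\,\cD^\omega f$ is what is used below, and that is what matters.

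The heart is \eqref{item4}. Since $\cD^\omega$ acts diagonally on monomials, $\cD^\omega f = \sqrt{-1}\,g$ is solved coefficient by coefficient: writing $f=\sum f_{\a\beta\gamma}z^\a w^\beta c^\gamma$ and $g=\sum g_{\a\beta\gamma}z^\a w^\beta c^\gamma$, the equation is
\begin{equation*}
\langle\omega,\a-\beta\rangle\, f_{\a\beta\gamma} = \sqrt{-1}\, g_{\a\beta\gamma}\qquad\text{for all }(\a,\beta,\gamma).
\end{equation*}
Because $\omega$ is irrational, $\langle\omega,\a-\beta\rangle=0$ exactly when $\a=\beta$. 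Hence for $\a\neq\beta$ we are forced to set $f_{\a\beta\gamma}=\sqrt{-1}\,g_{\a\beta\gamma}/\langle\omega,\a-\beta\rangle$, which is well defined; while for $\a=\beta$ the left side vanishes and so solvability requires $g_{\a\a\gamma}=0$ for all $\a,\gamma$, i.e. $\cM g=0$, and conversely when $\cM g=0$ the diagonal coefficients $f_{\a\a\gamma}$ are completely free. This proves both the necessary-and-sufficient condition $\cM g=0$ and that the solution is unique up to adding an arbitrary diagonal series. It remains to check that the solution can be chosen $\sigma$-symmetric: by item \eqref{item'4}, $\sqrt{-1}\,g$ being built from a $\sigma$-symmetric $g$... more precisely, one verifies that the particular solution $f$ with all diagonal coefficients set to $0$ is itself $\sigma$-symmetric, using $\overline{g_{\a\beta\gamma}}=g_{\beta\a\gamma}$, $\omega\in\R^d$, and the sign bookkeeping above; adding any $\sigma$-symmetric diagonal series keeps it $\sigma$-symmetric and exhausts all $\sigma$-symmetric solutions.

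I do not anticipate a serious obstacle: the only place demanding care is making sure the $\sqrt{-1}$ is inserted so that the reality/$\sigma$-symmetry signs work out — this is why the statement involves $\sqrt{-1}\,\cD^\omega$ rather than $\cD^\omega$ — and confirming that "unique modulo diagonal series" is exactly the kernel of $\cD^\omega$ (which, on formal series, is precisely the space of diagonal series, by the same irrationality computation). One should also remark, as the paper does right after, that the argument is purely coefficient-wise and so applies verbatim to convergent power series only in the sense of formal identities; convergence of the solution is a separate matter handled elsewhere and is not claimed here.
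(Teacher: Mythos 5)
Your proof is correct and is the intended argument: the paper states this lemma without proof (``easy to prove''), and the expected verification is exactly your monomial-by-monomial computation, including the coefficient characterization $\overline{a_{\alpha\beta\gamma}}=a_{\beta\alpha\gamma}$ of $\sigma$-symmetry and the division by $\langle\omega,\alpha-\beta\rangle$ for $\alpha\neq\beta$ in the cohomological equation. One small remark: your hedge about $\sqrt{-1}\,\cD_i f$ is unnecessary, since the same sign bookkeeping you did for $\cD^\omega$, with $\omega$ replaced by the real $i$-th coordinate vector, shows each component $\sqrt{-1}\,\cD_i f$ is itself $\sigma$-symmetric, so $\sqrt{-1}\,\cD f$ is $\sigma$-symmetric componentwise as the lemma asserts.
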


\bigskip

\section{Formal Normal Forms}\label{sBNF}

\bigskip

\subsection{Exact symplectic mappings and generating functions}\label{s23}

  Let $Z:(z,w)\mapsto (z',w')$ be a holomorphic mapping of $(\C^{2d},0)$ endowed with the 
canonical symplectic form $dz\wedge dw$.  Since $d(zdw)=dz\wedge dw$, $Z$ is  {\it symplectic} if and only if the one-form $Z^*(zdw)-zdw$ is {closed}. By definition, $Z$ is said to be  {\it exact}, or {\it exact symplectic} if and only if the  one-form $Z^*(zdw)-zdw$ is exact. 
(Since we are on a simply connected domain symplectic implies exact symplectic.) Under general
conditions --  $Z$ is $\cC^1$-close to the identity mapping for example -- there exists a holomorphic 
function $f:(\C^{2d},0)\to \C$ such that
\be(z',w')=Z(z,w)
 \iff  \label{2.2} \begin{cases}
z'=z+\pa_{w'}f(z,w')\\
w=w'+\pa_{z}f(z,w')
\end{cases}  \ee
The construction of $f$ is the following: since  $Z$ is exact, there exists a holomorphic 
function $g:(\C^{2d},0)\to \C$ such that $Z^*(zdw)-zdw=dg$ and we define $f$ by
\be\label{2.3}
f(z,w')=g(z,w)-z(w'-w),\quad w'=\phi(z,w'),\ee
where $w'=\phi(z,w')$ is determined (by the implicit function theorem) from $(z',w')=Z(z,w)$.

A function $f$  like in \eqref{2.2} is called a {\it generating function} for  $Z$  --  it is unique up to an additive constant.
Conversely, any holomorphic  function $f:(\C^{2d},0)\to \C$ is under general conditions   --   $f$ is $\cC^2$-close to zero for example  --   the generating function for a unique exact holomorphic mapping. (This is a straight forward verification which can be found in most books on symplectic dynamics\slash geometry,  for example in \cite{SM}.)

If $Z$ depends holomorphically on some parameters $c$, then its generating function depends holomorphically on $c$, and conversely. This correspondence also preserves reality  --   $Z$ is real holomorphic if, and only if,  it's generating function is real holomorphic  --  but $\sigma$-symmetry is not preserved.

These properties carry over to the setting of (formal) mappings of the form
$$(*)\quad Z(z,w,c^2)-(z,w)\in \C[[z,w,c^2]]\cap\OO^2(z,w,c).$$

 \begin{Lem}\label{formcomp}
\ 
 \begin{itemize}
 
\item[(a)] The set of mappings of the form $(*)$ is a group under composition, and the set of (formally) exact mappings
 of the form $(*)$ is a subgroup.
  
\item[(b)] A (formally) exact mapping of the form $(*)$ has a unique (formal) generating
function of the form
$$f(z,w',c^2)\in \C[[z,w',c^2]]\cap \cO^3(z,w',c).$$

\item[(c)] Any function
$$f(z,w',c^2)\in \C[[z,w',c^2]]\cap \cO^3(z,w',c)$$
is the generating function of a unique (formally) exact mapping of the form $(*)$.

 \end{itemize}
 \end{Lem}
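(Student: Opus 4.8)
The plan is to reduce everything to the convergent (holomorphic, near-identity) situation already recalled in Section \ref{s23} and then transfer the statements to the formal category degree by degree. For part (a), I would first check stability under composition: if $Z_1,Z_2$ are of the form $(*)$, i.e.\ $Z_i(z,w,c^2)-(z,w)\in\C[[z,w,c^2]]\cap\OO^2(z,w,c)$, then the composition $Z_1\circ Z_2$ is a well-defined formal object because each homogeneous component of $Z_1\circ Z_2$ of a given degree in $(z,w,c)$ depends only on finitely many components of $Z_1$ and $Z_2$ (here one uses crucially that $Z_i-\mathrm{id}\in\OO^2(z,w,c)$, so composing does not lower degree and the substitution is formally finite in each degree). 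The identity $\OO^2(z,w,c)$-condition is preserved, and the inverse exists by the formal inverse function theorem (solve $Z(z,w,c^2)=(z',w')$ order by order, the linear part being the identity). For the exact subgroup, I would recall that exactness of $Z$ means $Z^*(zdw)-zdw$ is exact, i.e.\ $=dg$ for some formal $g$; the cocycle/Leibniz computation $(Z_1\circ Z_2)^*(zdw)-zdw=(Z_2^*[(Z_1^*(zdw)-zdw)])+(Z_2^*(zdw)-zdw)$ shows the pullback of one exact form under another exact map plus an exact form is exact, so exactness is closed under composition; similarly for inverses. All these manipulations are legitimate on formal power series because $d$, pullback, and $\wedge$ act degreewise.

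For parts (b) and (c), the strategy is to run the construction \eqref{2.3} and its converse in the formal setting. Given a formally exact $Z$ of the form $(*)$, write $Z^*(zdw)-zdw=dg$ with $g\in\C[[z,w,c^2]]$; normalizing the additive constant and using $Z-\mathrm{id}\in\OO^2(z,w,c)$ one gets $g\in\OO^3(z,w,c)$ (the one-form $Z^*(zdw)-zdw$ lies in $\OO^2$, so its formal primitive lies in $\OO^3$). Then solve $w'=\phi(z,w',c^2)$ from $(z',w')=Z(z,w,c^2)$ by the formal implicit function theorem — again the linear part is the identity, so this is solvable order by order — and set $f(z,w',c^2)=g(z,w,c^2)-z(w'-w)$. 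One checks, exactly as in the convergent case, that $f$ satisfies \eqref{2.2} formally, that $f\in\C[[z,w',c^2]]\cap\OO^3(z,w',c)$, and that $f$ is unique up to an additive constant (difference of two generating functions has zero differential hence is constant, formally). Conversely, given $f\in\C[[z,w',c^2]]\cap\OO^3(z,w',c)$, the system \eqref{2.2} determines $Z$ uniquely: solve the second equation $w=w'+\pa_z f(z,w',c^2)$ for $w'$ in terms of $(z,w)$ formally — the $\OO^3$ hypothesis makes $\pa_zf\in\OO^2$ so the map $w'\mapsto w'+\pa_zf$ is a formal near-identity diffeomorphism in $w'$ — then read off $z'=z+\pa_{w'}f$; the resulting $Z$ is of the form $(*)$ and is formally exact with generating function $f$ (one verifies $Z^*(zdw)-zdw=d(f+ \text{lower order bookkeeping terms})$ by the standard generating-function identity).

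The one point requiring genuine care — the main obstacle — is making sure every formal substitution in parts (a)--(c) is well-defined, i.e.\ that each coefficient of each output series is determined by finitely many coefficients of the inputs. This is exactly where the constraint $Z-\mathrm{id}\in\OO^2(z,w,c)$ (and correspondingly $f\in\OO^3$) is indispensable: it guarantees that composition, inversion, and the implicit-function solve all act as filtered operations that are finite in each total degree in $(z,w,c)$, so no infinite sums of coefficients ever occur and the formal power series ring $\C[[z,w,c^2]]$ genuinely receives these operations. Once this bookkeeping is in place, every identity is proved by truncating at degree $N$, invoking the already-cited convergent (polynomial) statement from Section \ref{s23}, and letting $N\to\infty$; no new analytic input is needed. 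I would therefore present the proof as: (1) set up the filtration by total degree in $(z,w,c)$ and observe $(*)$-maps form a filtered near-identity group; (2) deduce (a); (3) run \eqref{2.3} formally for (b), with the uniqueness-up-to-constant remark; (4) run \eqref{2.2} backwards for (c).
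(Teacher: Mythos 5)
Your proposal is correct and follows essentially the same route as the paper: the paper's proof likewise treats (a) as a direct degreewise computation on formal series, and proves (b) and (c) by observing that in the constructions \eqref{2.3} and \eqref{2.2} the low-order coefficients of the output depend only on finitely many low-order coefficients of the input, so one can truncate at order $N$, invoke the convergent statement of Section \ref{s23}, and let $N\to\infty$ --- exactly your filtration/finiteness argument. Your extra details (the pullback cocycle identity for exactness of compositions and inverses, and the $\OO^2\Rightarrow\OO^3$ bookkeeping for the primitive $g$) are just an expansion of what the paper compresses into ``a direct computation.''
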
 

\begin{proof} 
{ (a) is a direct computation on formal power series. 
(b) follows since a closed (formal) one-form is exact, which gives us a formal $g$: notice that ``low order'' terms 
(in $z,w,c$) of  $g$ do not depend on ``high order'' terms of $Z$. 
By truncating $g$ at some order $N$ we can apply  the formula \eqref{2.3}
which gives an $f_N$: notice that ``low order'' terms of $f_N$ do not depend on ``high order'' terms of $g$. 
Therefore this defines a formal generating function by letting $N\to\infty$.
(c) follows  by truncating $f$ (in $z,w,c$) at some order $N$ and and then define $Z_N$ by the formula \eqref{2.2}: notice that ``low order'' terms of $Z_N$ do not depend on ``high order'' terms of $f$. 
Therefore this defines a (formally) exact  mapping $Z$ by letting $N\to\infty$.
}
\end{proof}

\subsection{Formal Normal Forms}\label{s24}
Assume that $H(z,w)$ is a formal Hamiltonian in $\bC[[z,w]]$ of the form
$$H(z,w)=\<\om_0,zw\>+\OO^3(z,w)$$
with a vector $\om_0\in\R^d$ which is rationally independent.
 It is a classical result that there exist a unique $N\in\bR[[r]]$, the Birkhoff Normal Form of $H$, and  a (formally) exact 
 mapping of the form
$$(*)\quad Z(z,w)-(z,w)\in \C[[z,w]]\cap\OO^2(z,w)$$ 
 such that 
$$H\circ Z(z,w)=N(zw)= \<\om_0,zw\>+\OO^3(zw).$$
If $c\in\bC^{d}$ is an extra formal parameter one can write
$$H\circ Z(z,w)=N(zw)=N(c^2)+\<\nabla N(c^2),zw-c^2\>+\cO^2(zw-c^2).$$
The aim of the following proposition is to prove { that such a representation is unique.}

\begin{Prop}\label{bnfIIelliptic-zw}
If there exist  a  formal  series 
$$f(z,w',c^2)\in \bC[[z,w',c^2]]\cap {  \OO^3(z,w', c^2)}$$
and formal series $\Gamma(c^2),\Omega(c^2)\in\bC[[c]]$,
$$\G(c^2)=\<\omega_{0},c^2\>+\OO^2(c^2),\quad \Omega(c^2)=\o_0+\OO(c^2)$$
such that
\begin{multline*}H(z,w)=\Gamma(c^2)+\<\Omega(c^2),z' {w'}-c^2\>+\\ \bigl\<z'{w'}-c^2),F(z',w',c^2)(z'{w'}-c^2)\bigr\>,\end{multline*}
where
$$z'=z+\pa_{w'}f(z,w',c^2),\quad w=w'+\pa_{z}f(z,w',c^2),$$
then  $\G(c^2)=N(c^2)$ and $\Omega(c^2)=\nabla N(c^2)$, i.e. the series $\G$ and $\Omega$ are unique.
\end{Prop}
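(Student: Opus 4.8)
The plan is to reduce everything to the uniqueness of the Birkhoff normal form together with Lemma \ref{uniquedecomp}. First I would set $c^2=0$. Since $f\in\OO^3(z,w',c^2)$, setting $c=0$ gives $z'=z+\pa_{w'}f(z,w',0)=z+\OO^2(z,w')$ and $w=w'+\OO^2(z,w')$, so $Z_0:(z,w')\mapsto(z',w')$ (composed with the inverse of $(z,w')\mapsto(z,w)$) is a formally exact mapping of the form $(*)$ with $c$ absent, i.e. $Z_0(z,w)-(z,w)\in\OO^2(z,w)$. From the hypothesis at $c=0$ we get
$$H(z,w)=\G(0)+\<\Omega(0),z'w'\>+\<z'w',F(z',w',0)z'w'\>,$$
which exhibits $H\circ Z_0^{-1}$ — or rather, after expressing $(z,w)$ in terms of $(z',w')$ — as a function of $z'w'$ alone of the form $\G(0)+\<\Omega(0),z'w'\>+\OO^2(z'w')$. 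By the uniqueness part of the Birkhoff normal form theorem recalled in \S\ref{s24}, this function equals $N$, hence $\G(0)=N(0)=0$ and $\Omega(0)=\nabla N(0)=\o_0$. So the constant and linear-in-$c^2$ parts of $\G$ and $\Omega$ already agree with those of $N$ and $\nabla N$; but this only handles the jet at $c=0$, not the full formal series in $c$, so more is needed.

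The second and main step is to promote this to an identity of formal series in $c$. Here I would use the canonical decomposition machinery of Section \ref{sec:2}. Rewrite the right-hand side of the hypothesis in the variables $(z,w)$ rather than $(z',w')$: since $z'w'-c^2 = zw - c^2 + (\text{terms in }\OO^2(zw-c^2)\text{ coming from }f\in\OO^3)$, substituting and regrouping, the hypothesis takes the form
$$H(z,w) = \G(c^2) + \<\Omega(c^2), zw-c^2\> + \OO^2(zw-c^2),$$
where the error term is understood in the sense of the expansion (\ref{A.exp})–(\ref{candec1}), i.e. a sum $\sum_{|\underline n|\ge 2}(zw-c)^{\underline n}(\cdots)$ with non-resonant (indeed $\sigma$-symmetric) coefficients. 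The key point is that $H(z,w)$ does not depend on $c$, so its canonical decomposition in the variable $c^2$ around a base point $c^2$ is: $H^{(0)}(z,w,c^2) = H(z,w)$ with $c^2$ frozen, etc. But one can also expand $H$ directly via the Birkhoff normal form: $H\circ Z = N(zw)$, and $N(zw)=N(c^2)+\<\nabla N(c^2),zw-c^2\>+\OO^2(zw-c^2)$, which is precisely a decomposition of $H\circ Z$ of the form (\ref{candec1}) with $a_0=N(c^2)$ (a constant in $(z,w)$, hence non-resonant) and $a_1=\nabla N(c^2)$ (also non-resonant). Now Lemma \ref{uniquedecomp} — which identifies $a_0=f^{(0)}$ and $a_1=f^{(1)}$ uniquely — forces $\G(c^2)$ and $\Omega(c^2)$ to coincide, order by order in $c$, with $N(c^2)$ and $\nabla N(c^2)$, provided I can show both sides are decompositions of the \emph{same} function $H$ (composed with the appropriate generating transformation).

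The real obstacle is reconciling the two different exact symplectic changes of variables: the hypothesis uses $Z$ generated by $f(z,w',c^2)$, whereas the Birkhoff normal form uses some $Z_{\mathrm{BNF}}(z,w)$ with no $c$-dependence. So I would first compose: let $W = Z_{\mathrm{BNF}}^{-1}\circ Z$, which by Lemma \ref{formcomp}(a) is again formally exact of the form $(*)$, and then $H\circ Z = (N\circ(zw))\circ W$ where I must track how $W$, being $\OO^2(z,w,c)$, acts on the expansion in $(zw-c)$. The point is that $W$ preserves the ideal structure: $W$ sends $zw-c^2$ to $zw-c^2$ plus higher-order terms in $(zw-c^2)$ — this needs to be checked using that $W$ is close to the identity and the expansion (\ref{A.exp}) — so that $(N\circ(zw))\circ W$ still has canonical decomposition with $f^{(0)}=N(c^2)$, $f^{(1)}=\nabla N(c^2)$. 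Once that invariance is established, Lemma \ref{uniquedecomp} applied to the single function $H$ (written two ways) gives $\G(c^2)=N(c^2)$ and $\Omega(c^2)=\nabla N(c^2)$ as formal series in $c$, which is the claim. A cleaner route, avoiding explicit manipulation of $W$, is to observe that the hypothesis already says $H\circ Z$ (expressed in $(z',w')$) has a canonical decomposition with $0$-part $\G(c^2)$ and $1$-part $\Omega(c^2)$, and that $H$ has, via the BNF, a representation $H\circ Z_{\mathrm{BNF}} = N(zw)$ which is manifestly diagonal and $c$-independent; subtracting and using uniqueness of the BNF at each jet in $c$ (expanding $\G,\Omega,f,F$ in powers of $c^2$ and matching) is the safest bookkeeping, which is why I expect the main difficulty to be purely organizational — controlling how the $c$-dependent generating function $f\in\OO^3$ perturbs the $(zw-c^2)$-adic filtration — rather than conceptual.
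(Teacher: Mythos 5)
Your reduction to a comparison with the Birkhoff normal form via $W=Z_{\mathrm{BNF}}^{-1}\circ Z$ is indeed the right starting point (it is how the paper begins, using Lemma \ref{formcomp}), but the step you flag as "needs to be checked" is precisely where the proposal breaks down, and it cannot be checked because it is false: a near-identity exact map of the form $(*)$ does \emph{not} in general send $zw-c^2$ to $zw-c^2$ plus terms in $\OO^2(zw-c^2)$, i.e.\ it does not preserve the $(zw-c^2)$-adic filtration. For instance the generating function $f(z,w',c^2)=z_1^3$ gives $z'=z$, $w_1=w_1'+3z_1^2$, hence $z'_1w'_1-c_1^2=(z_1w_1-c_1^2)-3z_1^3$, and $z_1^3$ does not lie in the ideal generated by the $z_iw_i-c_i^2$ (modulo that ideal it equals a nonresonant term times no power of $\Delta$ at all). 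Consequently $H\circ Z_{c}$ and $H\circ Z_{\mathrm{BNF}}$ are genuinely different functions of $(z,w,c)$, and Lemma \ref{uniquedecomp} — which only identifies the canonical decomposition of a \emph{single fixed} function — cannot be invoked to match $\G,\Omega$ with $N,\nabla N$. The same objection applies to your "cleaner route" of matching jets in $c$: the whole issue is to show that the $c$-dependent freedom in $f(z,w',c^2)$ cannot shift $\G$ and $\Omega$, and that is not a formal bookkeeping matter.

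What is missing is the actual content of the paper's proof. First one needs a further normalization (Lemma \ref{reduction}) replacing the remainder $\bigl\<z'w'-c^2,F(z',w',c^2)(z'w'-c^2)\bigr\>$ by $\sum_{n\ge2}G_n(c^2)\cdot(z'w'-c^2)^{\otimes n}$ with coefficients depending on $c$ only; this is an induction solving cohomological equations $\<\Omega(0),\cD f\>=\cdots$ and already uses that $\omega_0$ is rationally independent (Lemma \ref{cohom}). Then the uniqueness statement (Lemma \ref{bnfIelliptic2}) is proved by showing, again by induction on the degree and using the operator $\cM$, diagonality, and rational independence of $\omega_0$, that the generating function of $W$ must satisfy $\cD f=0$; only after this does one get $zw=z'w'$ under $W$, so that both sides become functions of $\Delta'=z'w'-c^2$ and $c$, at which point Lemma \ref{uniquedecomp} finally yields $\G=N$ and $\Omega=\nabla N$. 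Your proposal invokes the conclusion of that induction (preservation of the filtration) as if it were an elementary property of near-identity maps, so the central argument is absent; note also that rational independence of $\omega_0$ never enters your argument except through the existence of the BNF, whereas it is indispensable in both inductions.
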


Proposition \ref{bnfIIelliptic-zw}  will be the consequence of the following two Lemmata.

\begin{Lem}\label{reduction}Let $H(z,w,c^2)$ be a formal Hamiltonian depending on $c^2$  of the form 
$$H(z,w,c^2)=\Gamma(c^2)+\<\Omega(c^2),zw-c^2\>+\cO^2(zw-c^2)$$
 where $\Omega(0)=\om_0 $ is rationally independent.
Then there exists  
{ a  formal  series 
$$f(z,w',c^2)\in \bC[[z,w',c^2]]\cap \OO^2(zw'-c^2)$$
}
and formal series $G_{k}(c^2)\in\bC[[c]]$, $k\geq 2$, such that
\begin{multline*}H(z,w,c^2)=\Gamma(c^2)+\<\Omega(c^2),(z'w'-c^2)\>+\\ \sum_{n=2}^\infty G_{k}(c^2)\cdot (z'w'-c^2)^{\otimes n},
\end{multline*}
where
$$z'=z+\pa_{w'}f(z,w',c^2),\quad w=w'+\pa_{z}f(z,w',c^2).$$
\end{Lem}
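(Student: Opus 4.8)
The plan is to prove Lemma \ref{reduction} by a normal-form induction on the order $n\ge 2$ in powers of $(zw-c^2)$, at each step using the cohomological equation solvability provided by Lemma \ref{cohom}(\ref{item4}). Write $H=\Gamma(c^2)+\<\Omega(c^2),zw-c^2\>+\sum_{n\ge 2}H_n(z,w,c^2)$, where $H_n=\sum_{|\und n|=n}(zw-c^2)^{\und n}H_{\und n}(z,w,c^2)$ is the order-$n$ part in the canonical decomposition \eqref{A.4} with $H_{\und n}\in\widehat{\cN\cR}$. Suppose inductively that after composing $H$ with an exact map generated by some $f^{[<n]}\in\OO^2(zw'-c^2)$ we have arranged that the terms of order $2,\dots,n-1$ in $(zw-c^2)$ are all diagonal, i.e. equal to $G_k(c^2)\cdot(zw-c^2)^{\otimes k}$ for $k<n$; I want to kill the off-diagonal part of the order-$n$ term by a further exact map generated by $f_n(z,w',c^2)=\sum_{|\und n|=n}(zw'-c^2)^{\und n}\phi_{\und n}(z,w',c^2)\in\OO^2(zw'-c^2)$, with the $\phi_{\und n}$ chosen non-resonant.

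The key computation is the first-order effect of such a near-identity exact map on $H$. Using \eqref{2.2}, $z'w'-c^2 = zw-c^2 + (\text{terms involving }\partial f)$, and since $f_n\in\OO^n(zw'-c^2)$ the transformation changes the order-$n$ part of $H$ only through the Poisson-bracket-type term $\{\<\Omega(c^2),zw-c^2\>, f_n\}$, evaluated modulo $\OO^{n+1}(zw-c^2)$; everything coming from $\Gamma$, from the already-normalized lower-order $G_k$, and from the higher-order remainder contributes only at order $\ge n+1$ or is absorbed into the new remainder $F$. Because $\cD(zw-c^2)^{\und n}=0$, the bracket with $\<\Omega(0),\cdot\>=\<\om_0,\cdot\>$ acts on the coefficient: the new order-$n$ coefficient is $H_{\und n}-\cD^{\om_0}\phi_{\und n}$ up to lower-weight corrections, plus correction terms involving $\Omega(c^2)-\om_0=\OO(c^2)$ which raise the total degree in $(z,w,c)$ and can therefore be dealt with by a secondary induction on total degree (as in Lemma \ref{cohom}(\ref{item2})–(\ref{item3})). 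By Lemma \ref{cohom}(\ref{item4}) the equation $\cD^{\om_0}\phi_{\und n}=\sqrt{-1}\,(\text{the }\sqrt{-1}\text{-twisted RHS})$ is solvable for a non-resonant $\phi_{\und n}$ precisely after subtracting $\cM H_{\und n}=H_{\und n}(0,0,c^2)$, and that leftover diagonal piece is exactly what becomes $G_n(c^2)\cdot(zw-c^2)^{\otimes n}$ (using \eqref{A.5} and that $\cM$ commutes with the whole construction). Assembling the $f_n$ for all $n\ge 2$ into one formal generating function $f=\sum_{n\ge2}f_n\in\bC[[z,w',c^2]]\cap\OO^2(zw'-c^2)$ — legitimate because, as in the proof of Lemma \ref{formcomp}, low-order terms of the composed map do not depend on high-order terms of $f$ — gives the claimed normal form, and composing the resulting exact maps (Lemma \ref{formcomp}(a)) yields the single map of the statement.

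I expect the main obstacle to be organizing the bookkeeping so that the induction is actually well-founded: the coefficient $\Omega(c^2)$ is not constant, so $\{\<\Omega(c^2),zw-c^2\>,f_n\}$ is not simply $\cD^{\om_0}$ applied to something, and one must run a nested induction — outer on the weight $n$ in $(zw-c^2)$, inner on total polynomial degree in $(z,w,c)$ — to reduce to the constant-coefficient cohomological operator $\cD^{\om_0}$, invoking Lemma \ref{cohom}(\ref{item2})–(\ref{item3}) to guarantee the inner induction only feeds on strictly lower-degree data. A secondary point of care is checking that $\cM$ of the right-hand side behaves correctly: Lemma \ref{cohom}(\ref{item4}) requires the obstruction $\cM g=0$, so one must verify that subtracting the diagonal part $G_n(c^2)(zw-c^2)^{\otimes n}$ indeed makes the remaining RHS lie in the range of $\cD^{\om_0}$ — this is where the structure of the canonical decomposition and the identity $\cM\cD^{\om_0}=0$ on the relevant space are used. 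The $\sigma$-symmetry is not asserted in this lemma, so I would not track it here; it enters in the companion statement. Finally, one should note the solution $\phi_{\und n}$ is only unique up to diagonal terms, which is harmless since those can be reabsorbed into the $G_k$'s — this is the formal analogue of the standard freedom in Birkhoff normalization.
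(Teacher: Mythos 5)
Your core mechanism is the same as the paper's: expand in the canonical non-resonant decomposition, use the operator $\<\Omega(c^2),\cD\,\cdot\,\>$ as the cohomological operator, extract the diagonal part with $\cM$ to define the $G_n$'s, and reduce the non-constant $\Omega(c^2)=\om_0+\OO(c^2)$ to the constant operator $\cD^{\om_0}$ by an induction on total degree in $(z,w',c)$. The paper does all of this in a \emph{single} induction on total degree $j$, determining $[f]_j$ and the pieces $[G_n]_{j-2n}$ simultaneously, whereas you nest an outer induction on the weight $n$ in $(zw-c^2)$ with an inner induction on degree and a separate exact map per weight. In the sequential version your step-$n$ computation is sound: if the step-$n$ map is generated by $f_n\in\OO^n(zw'-c^2)$, then $\cD f_n$ has weight $n$, the already-normalized terms $G_k\cdot(zw-c^2)^{\otimes k}$, $k<n$, and the weight-$\ge n$ blocks only produce weight-$\ge n+1$ corrections, and the weight-$n$ coefficients change exactly by $\<\Omega(c^2),\cD\phi_{\und m}\>$ (writing $f_n=\sum_{|\und m|=n}(zw'-c^2)^{\und m}\phi_{\und m}$), which your inner induction handles as the paper does.

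The gap is at the assembly. Generating functions are not additive under composition: the composed map $Z_{f_2}\circ Z_{f_3}\circ\cdots$ is \emph{not} generated by $\sum_n f_n$, so "assembling the $f_n$ into $f=\sum f_n$" does not produce the single $f$ of the statement. Lemma \ref{formcomp}(a),(b) does give the formally convergent composition a generating function, but only in the class $\OO^3(z,w',c)$, while the statement requires $f\in\OO^2(zw'-c^2)$; you give no argument for that membership (one would need, e.g., to observe that each $Z_{f_n}$, hence the composition, is the identity on $\{zw=c^2\}$ and deduce second-order vanishing of its generating function along $zw'=c^2$). If instead you work with a single $f=\sum_n f_n$ from the start -- which is what the statement asks and what the paper does -- then your key claim that only the bracket term touches weight $n$ fails: $z\pa_z f_n$ and $w'\pa_{w'}f_n$ have weight $n-1$, not $n$ (differentiating $(zw'-c^2)^{\und m}$ and multiplying back by $z_i$ produces a $c_i^2$ term of weight $n-1$), so the quadratic block contributes terms like $F^{(2)}\cdot\bigl((zw'-c^2)\otimes_{sym} z\pa_z f_n\bigr)$ and $G_2$ contributes $G_2\cdot\bigl((zw'-c^2)\otimes_{sym} w'\pa_{w'} f_n\bigr)$ at weight exactly $n$, linear in $f_n$ with non-constant coefficients. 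These must be fed into the induction; controlling them is precisely the content of the paper's analysis of the terms $T_{1,j},T_{2,j},T_{3,j}$, each shown to depend only on coefficients of $f$ of total degree $\le j-1$. So the scheme is reparable, but as written the last step either uses a false additivity of generating functions or silently omits the bookkeeping that constitutes the actual proof.
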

\begin{proof}

Let us write $$H(z,w,c^2)=\Gamma(c^2)+\<\Omega(c^2),zw-c^2\>+F(z,w,c^2)\cdot (zw-c^2)^{\otimes 2}$$
and  denote $\Delta=zw-c^2$, $\ti\Delta=zw'-c^2$ and $\Delta'=z'w'-c^2$.
We have
\begin{align*}\Delta&=zw'+z\pa_{z} f(z,w',c^2)-c^2\\
&=\ti\Delta+z\pa_{z}f(z,w',c^2)
\end{align*}
and 
\begin{align*}\Delta'&=zw'+w'\pa_{w'} f(z,w',c^2)-c^2\\
&=\ti\Delta+w'\pa_{w'}f(z,w',c^2)
\end{align*}
We thus  have to construct $f=\cO^3(z,w',c^2)$ and the $G_{k}$ such that 
\begin{multline}\<\Omega(c^2),\cD f(z,w',c^2))\>=\\- F (z,w'+\pa_{z}f(z,w',c^2),c^2)\cdot(\ti\Delta+z\pa_{z}f(z,w',c^2))^{\otimes 2}+\\
 \sum_{n=2}^\infty G_{n}(c^2)\cdot (\ti\Delta+w'\pa_{w'}f(z,w',c^2)^{\otimes n}.\label{eq1}
\end{multline}
To do this we proceed by induction. If $L$ is a formal function of the variables $(z,w',c^2)$ denote by $[L]_{j}$ its  homogeneous part of total degree $j$ in { $(z,w',c)$}. 
Taking the $[\cdot]_{j}$ part of equation (\ref{eq1}) we get 
\begin{multline}\sum_{k+l=j}\<[\Omega(c^2)]_{k},[\cD f(z,w',c^2))]_{l}\>=\\-[F (z,w'+\pa_{z}f(z,w',c^2),c^2)\cdot(\ti\Delta+z\pa_{z} f(z,w',c^2))^{\otimes 2}]_{j}+\\
 \sum_{n=2}^\infty [G_{n}(c^2)\cdot (\ti\Delta+w'\pa_{w'}f(z,w',c^2))^{\otimes n}]_{j}.\label{eq2}
\end{multline}

{ Since $[f]_{0}=[f]_{1}=[f]_{2}=0$,  (\ref{eq2}) can} be written 
 \begin{multline} \<\Omega(0),[\cD f(z,w',c^2))]_{j}\>=T_{1,j}+T_{2,j}+T_{3,j} +\\ \sum_{n\ge s2} [G_{n}(c^2)]_{j-2n}\cdot (\ti\Delta)^{\otimes n} \label{eq2'}\end{multline} 
 where 
 \be T_{1,j}=-\sum_{\ntop{k+l=j}{{ 2\leq l < j}}}\<[\Omega(c^2)]_{k},[\cD f(z,w',c^2))]_{l}\>\ee
 \be T_{2,j}= -[F (z,w'+\pa_{z}f(z,w',c^2),c^2)\cdot(\ti\Delta+z\pa_{z} f(z,w',c^2))^{\otimes 2}]_{j} \ee
 \be T_{3,j}= \sum_{n=2}^\infty \sum_{k+l=j}[G_{n}(c^2)]_{k}\cdot [(\ti\Delta+w'\pa_{w'}f(z,w',c^2))^{\otimes n}]_{l}- \sum_{n=2}^\infty [G_{n}(c^2)]_{j-2n}\cdot (\ti\Delta)^{\otimes n}\ee

Notice that the term $T_{2,j}:=[F (z,w'+\pa_{z}f(z,w',c^2),c^2)\cdot(\ti\Delta+z\pa_{z} f(z,w',c^2))^{\otimes 2}]_{j}$  is a linear combination of terms of the form $[F(z,w'+\pa_{z}f(z,w',c^2),c^2)]_{k}\cdot \ti\Delta^{\otimes m_{1}}\otimes_{sym}[z\pa_{z}f(z,w',c^2)]_{l_{2}}^{\otimes m_{2}}$ with $k+2m_{1}+m_{2}l_{2}=j$ and $m_{1}+m_{2}=2$; hence $l_{2}\leq j-1$. Also, since $[z\pa_{z}f]_{0}=0$ one has $l_{2}\geq 1$ and thus $k\leq j-2$. The term  $[F(z,w'+\pa_{z}f(z,w',c^2),c^2)]_{k}$ depends on $f$ only through its coefficients of total degree   $\leq k+1$ and thus less or equal to $j-1$. In conclusion the term $T_{2,j}$ depends on $f$ only through its coefficients of total degree $k\leq j-1$. 

A similar analysis  shows that the same is true for the term $T_{3,j}$ and of course,  for the term $T_{1,j}$.  In conclusion, all the terms $T_{1,j},T_{2,j},T_{3,j}$ in equality (\ref{eq2'}), except $\<\Omega(0),[\cD f(z,w',c^2))]_{j}\>$ depend on $f$ only through its coefficients  of total degree less or equal to $j-1$. 

{ Moreover, by assumption on $f$, the derivatives
$z\pa_{z} f(z,w',c^2)$ and $w'\pa_{w'} f(z,w',c^2)\in \OO^1(\tilde \De)$ and, hence,  $T_{2,j}$
and $T_{3,j} \in \OO^2(\tilde \De)$. Since $\cD f(z,w',c^2))\in \OO^2(\tilde \De)$, also $T_{1,j} \in \OO^2(\tilde \De)$.   }

Finally, since  $[w'\pa_{w'}f]_{j}=0$ for $j=0,1,2$ we see that the term $T_{3,j}$ depends on the $[G_{n}]_{l}$ only for $l<j-2n$.

We can now construct by induction $[f]_{j}$ and the $[G_{n}(c^2)]_{j-2n}$ for all the $n$ such that  $2n\leq j$. For $j=3$ it is enough to choose $[f]_{3}=0$. Then assuming we have constructed $[f]_{k}$ and $[G_{n}(c^2)]_{k}$ for all $k\leq j-2n$, $3\leq k\leq j-1$, we can find $[f]_{j}$ and $[G_{n}]_{j-2n}$ such that  (\ref{eq2'}) holds: indeed, 
{ we define 
$$\sum_{n\ge 2} [G_{n}(c^2)]_{j-2n}\cdot (\ti\Delta)^{\otimes n}=-\cM(T_{1,j}+T_{2,j}+T_{3,j})$$
}
and we apply item \ref{item4} of Lemma \ref{cohom} with $\omega=\Omega(0)$.
\end{proof}

The second statement is about uniqueness. 
\begin{Lem}\label{bnfIelliptic2}
Assume that there exist  { a formal  series
$$f(z,w',c^2)\in \bC[[z,w',c^2]]\cap \OO^2(z,w',c)$$
}
and formal series $\Gamma(c^2),\Omega(c^2), F_{k}(c^2) \in\bC[[c]]$  such that  
\begin{multline*}N(zw)=\Gamma(c^2)+\<\Omega(c^2),z'{w'}-c^2\>+\sum_{n=2}^\infty F_{n}(c^2)\cdot (z'{w'}-c^2)^{\otimes n},\end{multline*}
where
$$z'=z+\pa_{w'}f(z,w',c^2),\quad w=w'+\pa_{z}f(z,w',c^2),$$
then  $\G(c^2)=N(c^2)$ and $\Omega(c^2)=\nabla N(c^2)$ (thus they are unique).
\end{Lem}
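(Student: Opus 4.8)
The plan is to show that the hypothesis in fact forces the generating function $f$ to be \emph{diagonal} (i.e.\ $\cM f=f$), after which the two series $\Gamma,\Omega$ are determined by an elementary substitution. First I would rewrite the assumed relation in the variables $(z,w',c)$, exactly as in the proof of Lemma~\ref{reduction}: putting $\ti\Delta_{j}:=z_{j}w'_{j}-c_{j}^{2}$, one has $z_{j}w_{j}-c_{j}^{2}=\ti\Delta_{j}+z_{j}\pa_{z_{j}}f$ and $z'_{j}w'_{j}-c_{j}^{2}=\ti\Delta_{j}+w'_{j}\pa_{w'_{j}}f$, so the hypothesis becomes the identity of formal power series in $(z,w',c)$
\begin{equation*}
N\bigl(c^{2}+\ti\Delta+z\pa_{z}f\bigr)=\Gamma(c^{2})+\bigl\<\Omega(c^{2}),\,\ti\Delta+w'\pa_{w'}f\bigr\>+\sum_{n\ge2}F_{n}(c^{2})\cdot\bigl(\ti\Delta+w'\pa_{w'}f\bigr)^{\otimes n}.
\end{equation*}

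Then I would prove by induction on the total degree $j$ in $(z,w',c)$ that each homogeneous component $[f]_{j}$ is diagonal ($[f]_{i}=0$ for $i\le1$ since $f\in\OO^{2}(z,w',c)$, and a preliminary look at degree $2$ — cleanest after setting $c=0$ and applying $\cM$, which kills all $\cD$-type terms — also yields $\Omega(0)=\omega_{0}$). The mechanism is that each $\ti\Delta_{k}$ is diagonal and the operators $z\pa_{z}$, $w'\pa_{w'}$ preserve diagonality (Lemma~\ref{cohom}): if $[f]_{i}$ is diagonal for $i<j$, then $\ti\Delta$, $z\pa_{z}[f]^{j-1}$ and $w'\pa_{w'}[f]^{j-1}$ are all diagonal, and — since $N$ on the left and $J\mapsto\Gamma(c^{2})+\<\Omega(c^{2}),J-c^{2}\>+\sum_{n}F_{n}(c^{2})(J-c^{2})^{\otimes n}$ on the right are functions of the diagonal quantities $z_{k}w'_{k}$, resp.\ of $J$ — the only non-diagonal part of degree $j$ on either side is carried by $[f]_{j}$: on the left it is the non-diagonal part of $\<\omega_{0},z\pa_{z}[f]_{j}\>$, on the right that of $\<\omega_{0},w'\pa_{w'}[f]_{j}\>$ (every higher Taylor term of $N$ and every $F_{n}$-term involves a non-diagonal factor of degree $\ge j$ times a further factor of degree $\ge2$, hence lands in degree $>j$). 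Equating non-diagonal parts in degree $j$ gives $\cD^{\omega_{0}}\!\bigl([f]_{j}-\cM[f]_{j}\bigr)=0$, and since $\omega_{0}$ is rationally independent $\cD^{\omega_{0}}$ acts on a monomial $z^{\a}(w')^{\beta}c^{\gamma}$ by the nonzero scalar $\<\omega_{0},\a-\beta\>$ whenever $\a\neq\beta$, so $[f]_{j}=\cM[f]_{j}$. This induction — at bottom a careful degree-count for the composition $N\circ(\,\cdot\,)$ and for the tensor powers of $\ti\Delta+w'\pa_{w'}f$ — is the only step requiring any work, and is where I expect whatever difficulty there is to be concentrated.

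Once $f$ is diagonal we may write $f=\phi(z_{1}w'_{1},\dots,z_{d}w'_{d};c^{2})$ for a formal series $\phi$; with $u_{j}=z_{j}w'_{j}$ the generating relations give $w_{j}=w'_{j}\,(1+\pa_{u_{j}}\phi)$ and $z'_{j}=z_{j}\,(1+\pa_{u_{j}}\phi)$, hence $z'_{j}w'_{j}=u_{j}(1+\pa_{u_{j}}\phi)=z_{j}w_{j}$ for every $j$, i.e.\ $z'w'=zw$. The hypothesis then collapses to the identity of formal power series in $(I,c)$, with $I=zw$,
\begin{equation*}
N(I)=\Gamma(c^{2})+\<\Omega(c^{2}),I-c^{2}\>+\sum_{n\ge2}F_{n}(c^{2})\cdot(I-c^{2})^{\otimes n},
\end{equation*}
and it remains only to set $I=c^{2}$, which gives $\Gamma(c^{2})=N(c^{2})$, and to differentiate once in $I$ before setting $I=c^{2}$, which gives $\Omega(c^{2})=\nabla N(c^{2})$. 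Finally I would observe that, together with Lemma~\ref{reduction}, this yields the uniqueness statement of Proposition~\ref{bnfIIelliptic-zw}, the expansion $N(zw)=N(c^{2})+\<\nabla N(c^{2}),zw-c^{2}\>+\cdots$ being the instance of the hypothesis with $f=0$.
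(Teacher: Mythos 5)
Your proof is correct and follows essentially the same route as the paper: the heart in both cases is the induction on total degree showing that every homogeneous component $[f]_j$ is diagonal, using that by the induction hypothesis all other degree-$j$ contributions are diagonal and that $\cD^{\omega_0}$ annihilates only diagonal monomials when $\omega_0$ is rationally independent (the paper phrases this as: $\<\nabla N(0),\cD [f]_{j}\>$ is diagonal, hence $\cD [f]_{j}=0$ by Lemma \ref{cohom}). Your endgame (observing $z'w'=zw$ and evaluating, resp.\ differentiating, at $I=c^2$) is a harmless variant of the paper's appeal to the uniqueness of the canonical decomposition (Lemma \ref{uniquedecomp}); note only that treating $I_j=u_j(1+\pa_{u_j}\phi)$ as a free variable uses the formal invertibility of $u\mapsto I$, which is guaranteed when the generating function lies in $\OO^{3}(z,w',c)$ as it does in the application via Lemma \ref{formcomp}(b) and Proposition \ref{bnfIIelliptic-zw} -- the same implicit nondegeneracy that underlies the paper's application of Lemma \ref{uniquedecomp} to the variable $\Delta'=z'w'-c^2$.
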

\begin{proof}
Let us denote $\ti\Delta=zw'-c^2$ and $\Delta'=z'w'-c^2$.
By assumption
$$N(zw'+z\pa_{z}f(z,w',c^2))=\Gamma(c^2)+\<\Omega(c^2),\Delta'\>+\sum_{n=2}^\infty F_{n}(c^2)\cdot (\Delta')^{\otimes n}$$
and using the fact that 
\begin{align*}zw'+z\pa_{z}f(z,w',c^2)&=zw'+w'\pa f(z,w',c^2)+\cD f(z,w',c^2)\\
&=c^2+\ti\Delta+\cD f(z,w',c^2)+w'\pa_{w'}f(z,w',c^2)
\end{align*}
and 
\be \Delta'
=\ti\Delta+w'\pa_{w'}f(z,w',c^2)
\ee
we have
\begin{multline*} N(c^2+\ti\Delta+\cD f(z,w',c^2)+w'\pa_{w'}f(z,w',c^2))=\\ \Gamma(c^2)+\<\Omega(c^2),\ti\Delta+w'\pa_{w'}f(z,w',c^2)\>+\sum_{n=2}^\infty F_{n}(c^2)\cdot (\ti\Delta+w'\pa_{w'}f(z,w',c^2))^{\otimes n}
\end{multline*}
Using Taylor formula
\begin{multline*}N(c^2)+\nabla N(c^2)\cdot (\ti\Delta+\cD f(z,w',c^2)+w'\pa_{w'}f(z,w',c^2))+\\ \sum_{|m|\geq 2}\pa^mN(c^2)\cdot (\ti\Delta+\cD f(z,w',c^2)+w'\pa_{w'}f(z,w',c^2))^{\otimes m}=\\ \Gamma(c^2)+\<\Omega(c^2),\ti\Delta+w'\pa_{w'}f(z,w',c^2)\>+\sum_{n=2}^\infty F_{n}(c^2)\cdot (\ti\Delta+w'\pa_{w'}f(z,w',c^2))^{\otimes n}\end{multline*}
so
\begin{multline}\nabla N(c^2)\cdot \cD f(z,w',c^2)+\sum_{|m|\geq 2}\pa^mN(c^2)\cdot (\ti\Delta+\cD f(z,w',c^2)+w'\pa_{w'}f(z,w',c^2))^{\otimes m}=\\ (\Gamma(c^2)-N(c^2))+\<\Omega(c^2)-\nabla N(c^2),\ti\Delta+w'\pa_{w'}f(z,w',c^2)\>+\\ \sum_{n=2}^\infty F_{n}(c^2)\cdot (\ti\Delta+w'\pa_{w'}f(z,w',c^2)))^{\otimes n}. \label{eq5}   \end{multline}

We denote by $[\cdot]_{j}$ the homogeneous polynomial part of total degree $j$ in 
the  { $(z,w',c)$} variables. 
{Using that $[f]_0=[f]_1=0$, it follows readily from  equation \eqref{eq5} that
$$\G(c^2)=\<\om,c^2\>+\OO^2(c^2),\quad \Omega(c^2)=\omega+\OO(c^2).$$
}

We now prove  that $\cD f(z,w',c^2)=0$. 
 We shall prove by induction on $j$ that for any $j\geq 0$,  $\cD [f]_{j}=0$. By assumption 
 { this is true for $j=0,1$. Let us assume this is true for all $1\leq k\leq j-1$.} By taking the $[\cdot]_{j}$ in equation (\ref{eq5}) and using items \ref{item2} and \ref{item3} of Lemma \ref{cohom} ($[\cD f]_{k}=\cD [f]_{k}$,  $[w'\pa_{w'}f]_{k}=w'\pa_{w'}[f]_{k}$), the fact that $[f]_{l}=0$ { for $0\leq l \leq 1$} and $[\Omega(c^2)-\nabla N(c^2)]_{0}=0$  we get 
\begin{multline} \nabla N(0)\cdot \cD [f]_{j}(z,w',c^2)=[(\Gamma(c^2)-N(c^2))]_{j}+\<[\Omega(c^2)-\nabla N(c^2)]_{j-2},\ti\Delta\>+\\ S_{1,j}+S_{2,j}+S_{3,j} +S_{4,j} \label{eq6}\end{multline}
where 
\be S_{1,j}=-\sum_{\ntop{k+l=j}{l\geq 2,k\geq 1}}[\nabla N(c^2)]_{k}\cdot \cD [f]_{l}(z,w',c^2) \ee
\be S_{2,j}=-[ \sum_{|m|\geq 2}\pa^mN(c^2)\cdot (\ti\Delta+\cD f(z,w',c^2)+w'\pa_{w'}f(z,w',c^2))^{m}]_{j} \ee 
\be S_{3,j}= \sum_{\ntop{k+l=j}{l\geq 2,k\geq 1}}\<[\Omega(c^2)-\nabla N(c^2)]_{k},w'\pa_{w'}[f]_{l}(z,w',c^2)\> \ee
\be S_{4,j}= \sum_{n=2}^\infty [F_{n}(c^2)\cdot (\ti\Delta+w'\pa_{w'}f(z,w',c^2)))^{\otimes n}]_{j}. \ee
We also  observe that the sum $S_{2,j}:= [ \sum_{|m|\geq 2}\pa^mN(c^2)\cdot (\ti\Delta+\cD f(z,w',c^2)+w'\pa_{w'}f(z,w',c^2))^{m}]_{j}$ is a linear combination of multilinear terms of the form $[\pa^mN(c^2)]_{l_{1}}\cdot \ti\Delta^{\otimes_{sym} m_{1}}\otimes_{sym}  [\cD f(z,w',c^2)]_{l_{2}}^{\otimes m_{2}}\otimes_{sym} [w'\pa_{w'}f(z,w',c^2)]_{l_{3}}^{\otimes m_{3}}$ with $l_{1}+2m_{1}+m_{2}l_{2}+m_{3}l_{3} =j$ and $m_{1}+m_{2}+m_{3}=m\geq 2$; hence $\max(m_{2},m_{3})\leq j-1$.   But the induction assumption implies that $[f]_{m}$ is diagonal for $m\leq j-1$, hence the same is true for  $\cD [f]_{m}$ and $w'\pa_{w'}[f]_{m}$. In conclusion, the sum $S_{2,j}$ is diagonal. A similar argument shows that the sum $S_{4,j}:=\sum_{n=2}^\infty [F_{n}(c^2)\cdot (\ti\Delta+w'\pa_{w'}f(z,w',c^2)))^{\otimes n}]_{j}$ is diagonal, as well as all the other terms of the equation  (\ref{eq6}) with  the possible exception of the term  $\<\nabla N(0),\cD [f]_{j}\>$. It then follows that $\<\nabla N(0),\cD [f]_{j}\>$ is diagonal. By Lemma \ref{cohom} this forces $\cD [f]_{j}=0$. 
This completes the induction and proves that $\cD f=0$.
Now equation (\ref{eq5}) reads
\begin{multline}(\Gamma(c^2)-N(c^2))+\<\Omega(c^2)-\nabla N(c^2),\Delta'\>=\\ =\sum_{|m|\geq 2}\pa^mN(c^2)\cdot (\Delta')^{\otimes m}- \sum_{n=2}^\infty F_{n}(c^2)\cdot (\Delta')^{\otimes n}. \label{eq7}   \end{multline}
hence
\be (\Gamma(c^2)-N(c^2))+\<\Omega(c^2)-\nabla N(c^2),\Delta'\>=\cO^2(\Delta') \ee
and Lemma \ref{uniquedecomp} concludes the proof.
\end{proof}

\begin{proof} 
{We can now prove Proposition  \ref{bnfIIelliptic-zw}. Using Lemma \ref{formcomp} we can assume that $H$ is under Birkhoff Normal Form and we then apply consecutively Lemmas \ref{reduction} and \ref{bnfIelliptic2}.

Indeed, using Lemma  \ref{formcomp}(c)  there is, by assumption, a (formally) exact mapping $Z_1$ in 
$(*)$ such that
\begin{multline*}H\circ Z_1(z,w,c^2)=\Gamma(c^2)+\<\Omega(c^2),zw-c^2\>+\\ + \bigl\<zw-c^2),F(z,w,c^2)(zw-c^2)\bigr\>,\end{multline*}
Using  Lemmas \ref{reduction} and \ref{formcomp}(c) there is a (formally) exact mapping $Z_2$ in 
$(*)$ such that
\begin{multline*}H\circ Z_1\circ Z_2(z,w,c^2)=
\Gamma(c^2)+\<\Omega(c^2),(z'w'-c^2)\>+\\ +\sum_{n=2}^\infty G_{k}(c^2)\cdot (z'w'-c^2)^{\otimes n}.
\end{multline*}
By the Birkhoff normal form there is a (formally) exact mapping $Z_3$ in 
$(*)$ such that
$$H\circ Z_3(z,w)=N(zw)= \<\om_0,zw\>+\OO^3(zw).$$
Hence $W=Z_3^{-1}\circ Z_1\circ Z_2$ is (formally) exact in $(*)$, by Lemma  \ref{formcomp}(a), and 
\begin{multline*}N\circ W (z,w,c^2)=
\Gamma(c^2)+\<\Omega(c^2),(z'w'-c^2)\>+\\ +\sum_{n=2}^\infty G_{k}(c^2)\cdot (z'w'-c^2)^{\otimes n}.
\end{multline*}
By Lemma  \ref{formcomp}(b), $W$ has a generating function and now the proposition follows from
Lemma \ref{bnfIelliptic2}.
}
 \end{proof}

\section{A KAM counter term theorem and the {frequency map}}\label{s3}
The proof of Theorem \ref{theo:1.1} relies on the following fundamental result:
\begin{Prop}\label{counterthm}
Given $0<\k<1$ and $\t>d-1$. Then, for all $s\in\N$, there exist 
{ non-negative
constants   (only depending on $s$ and $\t$) 
$$\al(s)\ge (s-t)+\al(t),\quad s\ge t\ge0,$$
}
such that if 
$$H(z,w)= N^q(zw)+\OO^{2q+1}(z,w)\in\CC^{\w}(\C^{2d},0),\quad q\ge \al(1)+1,$$
is $\sigma$-symmetric
with 
$$N^q(r)=\l{\o_0,r}+\OO^2(r),$$
{ then there exist $\de>0$ and for any $\eta<\de$ a $\sigma$-symmetric function
$$\Lambda=\Lambda(c,\o)\in \CC^{\w,\i}(\bD^{d}_{\eta}\times B)$$
and a symplectic and $\sigma$-symmetric diffeomorphism
$$
(Z_{c,\om}-\mathrm{id})(z,w)\in \CC^{\w,\i}(\bD^{2d}_\eta\times \bD^{d}_{\eta}\times B)\cap \OO^{2}(z,w,c)$$
such that
\begin{multline} \label{p35}
(H+\l{\o+\Lambda(c,\o), \cdot}) \circ Z_{c,\om}(z, w)\\
=\l{\o,zw-c}+\OO^2(zw-c) +g(z,w,c,\omega)\end{multline}
}
(modulo an additive constant that depends on $c,\o$) with $g$ $(\k,\t)${\it -flat} and 
{  $g\in \cO^2(z,w,c)$.}

Moreover,
\begin{itemize}
\item[(i)]
for any $s\in\N$ 
there exists a constant $C_s$, only depending on $s,H,\t$ such that
$$\aa{\Lambda+\p_rN^q}_{\eta,s}+\aa{Z-\mathrm{id}}_{\eta,s}\le C_s\eta^{q}
(\frac{1}{\k\eta})^{\al(s)}$$

\item[(ii)] there exists a constant $C$, only depending on $H,\t$,
such that
$$\de \ge \frac1C\k^{\frac{\al(1)}{q-\al(1)}}$$
\item[(iii)] if
$$\o_0\in DC(2\k,\t)$$
 then the mapping
$$\bD^{d+1}_{\de'}\ni (c,\la)\mapsto \Lambda(c,(1+\la)\o_0)\in \bC^d$$
is  holomorphic and $\s$-symmetric  for some $0<\de'<\delta$

{
\item[(iv)]
$$(Z_{c^2,\om}-\mathrm{id})(z,w)\in\cO^{2q}(z,w,c).$$
}
\end{itemize}

\end{Prop}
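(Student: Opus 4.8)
This is a counter-term KAM theorem of Herman--Moser--R\"ussmann--F\'ejoz type, and I would prove it by a quadratically convergent normal-form iteration carried out with $\omega$ as a parameter, followed by a Whitney-type extension in $\omega$. First I would introduce the action parameter by writing $zw = c + (zw-c)$ throughout (with $c$ --- or rather $c^2$, after the rescaling used in the statement --- playing the role of the action value) and adjoin the counter term $\langle \omega + \Lambda(c,\omega),\cdot\rangle$, linear in the action variables; the form to be reached is $\langle\omega, zw-c\rangle + \OO^2(zw-c) + g$ modulo a $(c,\omega)$-dependent additive constant. After the canonical decomposition of Section~\ref{sec:2}, the only part of the current error that must be removed at each step is the pair $(e^{(0)}, e^{(1)})$ of non-resonant functions attached to orders $0$ and $1$ in $(zw-c)$; the $\OO^2(zw-c)$ remainder is admissible and, importantly, need not be made flat.

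The homological step is the core. Realizing the new conjugacy as the exact symplectic map with generating function $f$ (Lemma~\ref{formcomp}), the linearized equation is, schematically,
\[
\cD^{\omega} f + \langle \lambda(c),\, zw-c\rangle \ \equiv\ -\,e^{(0)} - \langle e^{(1)},\, zw-c\rangle ,
\]
the congruence being modulo $\OO^2(zw-c)$ and an additive constant. The free parameter $\lambda(c)$, added to $\Lambda$, is exactly the device that makes this solvable: it cancels the diagonal (resonant) part $\cM(\cdot)$ of the linear-in-action obstruction, the constant part is thrown into the additive constant, and the remaining purely non-resonant right-hand side is inverted by $\cD^\omega$ via item~\ref{item4} of Lemma~\ref{cohom} --- applicable precisely because $\omega$ is rationally independent and the $\cM$-obstruction has been removed. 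Items~\ref{item'4} and~\ref{item4} of Lemma~\ref{cohom} (tracking the $\sqrt{-1}$ factors) ensure $\sigma$-symmetry is preserved at every stage. The analytic bookkeeping is the usual one: each small-divisor inversion and each composition costs a fixed power of $(\kappa\eta_n)^{-1}$, the analyticity domain shrinks geometrically from $\eta$ to some $\eta_\infty\gtrsim\eta/2$, and the initial error (which lies in $\OO^{2q+1}(z,w)$) is of high order, hence tiny on $\bD^{2d}_\eta$; for $q\ge\alpha(1)+1$ and $\eta$ below the threshold in (ii) the Newton scheme therefore converges, and the competition between this initial smallness and the accumulated loss $(\kappa\eta)^{-\alpha(1)}$ is what unwinds to $\delta\gtrsim\kappa^{\alpha(1)/(q-\alpha(1))}$. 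On $\{\omega\in DC(\kappa,\tau)\}$ the iteration converges honestly and yields an exact normal form, so $g\equiv0$ there; that $g\in\cO^2(z,w,c)$, and that the jet of $\Lambda(\cdot,\omega)$ coincides to high order with $-\partial_r N^q$, follow by order counting, consistently with the uniqueness of the formal normal form, Proposition~\ref{bnfIIelliptic-zw}.

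For the dependence on $\omega$ I would differentiate the homological equation $\cD^\omega f = \sqrt{-1}\,g$ in $\omega$; each extra $\omega$-derivative forces one further small-divisor inversion, so $s$ derivatives cost $s$ additional powers of $(\kappa\eta)^{-1}$ beyond the $\omega$-independent loss --- this is the source of the required growth $\alpha(s)\ge(s-t)+\alpha(t)$ and of the estimates in (i). Thus $\Lambda$ and $Z-\mathrm{id}$ are $C^\infty$ in $\omega$ on the closed set $DC(\kappa,\tau)$ with Whitney jets bounded as in (i); a Whitney (or Stein) extension then promotes them to $C^\infty$ maps on a full neighbourhood $B$ of the Diophantine set, still holomorphic and $\sigma$-symmetric in $(z,w,c)$. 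The residual $g$ then vanishes to infinite order in $(z,w)$ and in $\omega$ at every point of $DC(\kappa,\tau)$ --- i.e.\ it is $(\kappa,\tau)$-flat --- since the iteration is exact there and the errors introduced by extending off $DC(\kappa,\tau)$ at each step are themselves flat on $DC(\kappa,\tau)$ of increasing order. Property (iii) is then immediate: if $\omega_0\in DC(2\kappa,\tau)$ then $(1+\lambda)\omega_0\in DC(\kappa,\tau)$ for $|\lambda|$ small, so along that segment no extension is needed and $(c,\lambda)\mapsto\Lambda(c,(1+\lambda)\omega_0)$ is genuinely holomorphic and $\sigma$-symmetric. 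Finally, for (iv), one works with $c^2$ as the action variable: in the canonical decomposition of the degree-$\ge 2q+1$ initial error each factor $(c^2)^{\underline n}$ compensates the degree lost by the corresponding non-resonant coefficient, so that $e^{(0)},\,\langle e^{(1)},\, zw-c^2\rangle\in\OO^{2q+1}(z,w,c)$; hence the first generating function $f_0\in\OO^{2q+1}(z,w,c)$ and $Z_0-\mathrm{id}\in\OO^{2q}(z,w,c)$, and as all later corrections are of strictly higher order, $Z_{c^2,\omega}-\mathrm{id}\in\OO^{2q}(z,w,c)$.

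The step I expect to be the main obstacle is the joint quantitative control of the two competing losses --- the geometric contraction of the analyticity radius against the polynomial loss of $(\kappa\eta)^{-1}$ per small divisor and per $\omega$-derivative --- organized so that the iteration converges under the single hypothesis $q\ge\alpha(1)+1$ and produces the clean exponents of (i)--(ii); and, closely tied to this, arranging the Whitney extension so that holomorphy in $(z,w,c)$, $\sigma$-symmetry, and the $(\kappa,\tau)$-flatness of $g$ all survive simultaneously.
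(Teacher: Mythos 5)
Your outline belongs to the same family as the paper's argument (a quadratically convergent scheme with a counter term adjusted at each step; the paper packages it as the local Normal Form Theorem~\ref{theo:5.1}, applied to $\ti H(z,w,c)=H(z,w)-N^{q}(c)-\<\nabla N^{q}(c),zw-c\>$, from which (i)--(iv) are read off). The genuine difference of route is your treatment of the frequency: you run the iteration on the closed set $DC(\k,\t)$ and then invoke a Whitney extension in $\o$, whereas the paper never leaves the full parameter set $B$: at every step the smooth cut-off operator $\cP$ (built from the bump function $l$) removes the near-resonant part for \emph{all} $\o$ simultaneously, the operator $\cL$ is smooth in $\o$ on $B$, and the discarded parts accumulate into the $(\k,\t)$-flat term $g$. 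Your alternative is workable in principle, but note that item (iii) is not ``immediate'' from it: holomorphy in the \emph{complex} variable $\la$ does not follow from convergence at the real frequencies $(1+\la)\o_0$; in the paper it is obtained by re-running the whole construction with $(c,\la)$ adjoined as analytic parameters, using that $\<k,(1+\la)\o_0\>=(1+\la)\<k,\o_0\>$ stays outside the cut-off region when $\o_0\in DC(2\k,\t)$ and $|\la|$ is small. You would need the same device.

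There are two concrete gaps in the core step. First, you realize each conjugacy by its generating function and assert that Lemma~\ref{cohom} keeps ``$\s$-symmetry preserved at every stage''. That is false for the map itself: as stated in Section~\ref{s23}, the generating-function correspondence preserves reality but \emph{not} $\s$-symmetry, and the statement of the Proposition requires $Z_{c,\o}$ to be $\s$-symmetric (this is what later makes the tori real in the nondegenerate case). The paper repairs this by replacing the exact symplectic map $Z_f$ by the time-1 map $\ti Z_f$ of the Hamiltonian flow of $f$, which is $\s$-symmetric when $\sqrt{-1}f$ is (Lemma~\ref{lem71}), and by showing $Z_f^{-1}\circ \ti Z_f-\mathrm{id}$ is quadratic in $f$ (Proposition~\ref{PropApp}), so the substitution is harmless inside a Newton step; some such mechanism is missing from your plan. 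Second, your linearized equation $\cD^{\o}f+\<\la(c),zw-c\>\equiv -e^{(0)}-\<e^{(1)},zw-c\>$ modulo $\OO^2(zw-c)$ omits a term that is neither $\OO^2(zw-c)$ nor quadratic in the error: the conjugation feeds the quadratic-in-action block back into the linear one through $-2\<\cD f,\,H^{[2]}(zw-c)\>$, whose coefficient is of size (error)$\times O(1)$. If it is not incorporated, the new linear-in-action error is comparable to the old one and the scheme does not converge quadratically. This is exactly why in Lemma~\ref{sanslambda:elliptic} the generating function is taken as $k^{(1)}=\cL\bigl(H^{(1)}-\<\cD k^{(0)},H^{(2)}\>\bigr)$ and why the obstruction killed by the counter term in Lemma~\ref{lambda:elliptic} is $M_H=\cM\bigl(H^{(1)}-H^{(2)}\cD\cL H^{(0)}\bigr)$ rather than $\cM H^{(1)}$. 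With these two repairs (and the $(c,\la)$-analytic rerun for (iii)), your outline matches the paper's proof.
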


\begin{Rem}
Notice that this proposition (except part (iii)) does not require that $\o_0$ is Diophantine. {
Notice also that, a priori, $\Lambda$, $Z$ and $g$ depend on $\kappa$ and on $\eta$.}
\end{Rem}

\begin{Rem}
{ It is also the case that  $g(z,w,c^2,\omega)$ and  $\Lambda(c^2,\omega)+\nabla N^q(c^2)$ are in $\cO^{2q}(z,w,c)$, but we shall not use this fact. }
\end{Rem}

This proposition follows from   the local Normal Form Theorem  \ref{theo:5.1}  applied to the Hamiltonian $\ti H(z,w,c)=H(z,w)-N^{q}(c)-\<\nabla N^{q}(c),zw-c\>$ in a similar way as Proposition 4.2 of \cite{EFK}.
Let us discuss this a bit, but for full details we refer to \cite{EFK}.

If we write $F(z,w)=H(z,w)-N^q(zw)=\cO^{2q+1}(z,w)$, then (with the notation of Section \ref{sec:2}) 
\begin{multline*} H(z,w)=N^q(c^2)+\<\nabla N^q(c^2),zw-c^2\>+\cO^2(zw-c^2)+\\
F^{(0)}(z,w,c^2)+\<F^{(1)}(z,w,c^2),zw-c^2\>+F^{[2]}(z,w,c^2)\cdot(zw-c^2)^{\otimes 2}
\end{multline*}
where $F^{(0)}(z,w,c^2)=\cO^{2q+1}(z,w,c)$ and $F^{(1)}(z,w,c^2)=\cO^{2q-1}(z,w,c)$.
{ Hence
\begin{multline}  \ti H(z,w,c)=F^{(0)}(z,w,c)+\\ +\<F^{(1)}(z,w,c),zw-c^2\>+\cO^2(zw-c).
\end{multline}
On domains where $\max(|z|,|w|,|c|)< \eta$,
${\ti H}^{(0)}(z,w,c,\omega)$ is of order $\eta^{q+1}$ and 
${\ti H}^{(1)}(z,w,c,\omega)$ is of order $\eta^{q}$. 
Using Lemma \ref{mainlemma:elliptic} we obtain 
that $[\tilde H]_{\eta,0}$ is of order $\eta^{q-b}$, where $b$ is a constant only depending on $\tau$ and $d$. 
If we take $h$ equal $\frac\eta4$, say,  then the smallness assumption \eqref{smallness} is fulfilled for any
$$\ \eta\le \frac1C\k^{\frac{a}{q-a-b}}$$
--  this gives the estimate of $\de$ in (ii).

If we  call $\ti \Lambda(c,\omega)$ the counter term $\Lambda$ obtained by applying Theorem \ref{theo:5.1} to $\ti H(z,w,c)$, we then get the conjugacy equation (\ref{p35}) with $\Lambda(c,\omega):=\ti\Lambda(c,\omega)-\nabla N^q(c)$. Since $\ti \Lambda$ is small (as quantified \eqref{upperbound} in Theorem \ref{theo:5.1}) we get also the first half of the  inequality given in item(i). The second half of the inequality in(i) also follows from  \eqref{upperbound}.

Item(iii) follows from the last part of Theorem  \ref{theo:5.1}.  

Since $\ti H(z,w,c)=\cO^{2q+1}(z,w,c)$ mod $\cO^2(zw-c)$ we have(iv) by Theorem \ref{theo:5.1}. }

\begin{Cor}\label{bnfIII} Given $0<\k<1$ and $\t>d-1$ and 
 non-negative constants  $\al(s)$
as in Proposition \ref{counterthm}, if 
$$H(z,w)= N^q(zw)+\OO^{2q+1}(z,w)\in\CC^{\w}(\{0\}),\quad q\ge \al(1)+1,$$
with 
$$N^q(r)=\l{\o_0,r}+\OO^2(r),$$
then, 
{ for any
$$\eta < \eta_0=\frac1{C'}\k^{\frac{\al(1)}{q-\al(1)}},$$
there exists a unique $\CC^{\i}$ function $\Omega:\{ c\in \R^d: |c|< \frac\eta2\} \to \R^d$
such that 
$$\Omega(c)+\Lambda(c,\Omega(c))=0,\quad \forall c$$
}
Moreover,
\begin{itemize}

\item[(i)]  for any $s\in\bN$ there exists a constant $C'_s$, only depending on $s,H,\t$ such that
$$\aa{\Omega-\p_rN^q}_{\CC^s(|c|<\frac\eta2)}\le  
C'_s\eta^q(\frac{1}{\k\eta})^{\al(s)}$$

\item[(ii)] If $\o_0\in {\rm DC}(\tau,\kappa)$, the Taylor series of $\Omega$ at $c=0$ is given by
$\nabla N_H(c)$. 

\end{itemize}

The constants  $C'_s$ only depend on $H,\t$.
\end{Cor}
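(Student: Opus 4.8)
To prove Corollary~\ref{bnfIII}, the plan is to solve $\Omega(c)+\Lambda(c,\Omega(c))=0$ by a quantitative implicit function theorem, with all the control coming from Proposition~\ref{counterthm}(i). Set $R(c,\o):=\Lambda(c,\o)+\p_rN^q(c)$; by Proposition~\ref{counterthm}(i) one has $\aa{R}_{\eta,s}\le C_s\eta^q(\tfrac1{\k\eta})^{\al(s)}$ for every $s$, and the equation to solve becomes the fixed point equation
$$\Omega(c)=\p_rN^q(c)-R(c,\Omega(c)).$$
Since $\p_rN^q$ is $\o$-independent, $\p_\o R=\p_\o\Lambda$, and the case $s=1$ gives $\aa{\p_\o R}_{\eta,0}\le C_1\eta^{q-\al(1)}\k^{-\al(1)}$, which is $<\tfrac12$ as soon as $\eta<\eta_0:=\tfrac1{C'}\k^{\al(1)/(q-\al(1))}$ with $C'=\max(1,2C_1)$; here $q-\al(1)\ge1$, so the exponent is $\le\al(1)$ and $C'$ depends only on $H,\t$. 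For such $\eta$ and each $c$ with $|c|<\eta/2$ the map $T_c:\o\mapsto\p_rN^q(c)-R(c,\o)$ is a $\tfrac12$-contraction; as $\p_rN^q(0)=\o_0$ and $R$ is small, it also sends a fixed ball around $\o_0$ into itself (after enlarging $C'$ by an $H,\t$-dependent factor if needed, keeping $\eta_0$ of the above form). Banach's fixed point theorem then yields a unique $\Omega(c)$, and since the contraction is global on $B$, $\Omega$ is the unique $\R^d$-valued (hence $B$-valued) solution. Smoothness of $\Omega$ is the $\cC^\infty$ implicit function theorem applied to $\Phi(c,\o):=\o+\Lambda(c,\o)$, whose $\o$-derivative $\mathrm{Id}+\p_\o R$ is invertible; $\Omega$ is real because $N^q$ has real coefficients and $\Lambda$ is $\s$-symmetric, so $\Phi$ maps the real slice into $\R^d$.

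For the quantitative bound (i) I would argue by induction on $s$. The case $s=0$ is $|\Omega-\p_rN^q|=|R(\cdot,\Omega)|\le\aa{R}_{\eta,0}$. For $s\ge1$, differentiate $\Omega-\p_rN^q=-R(\cdot,\Omega(\cdot))$ by the Fa\`a di Bruno formula: each term is $\p_c^a\p_\o^kR(c,\Omega(c))\cdot\prod_i\Omega^{(n_i)}(c)$ with $a+\sum_i n_i=s$ and $k\le\sum_i n_i$, hence $a+k\le s$. Bounding the $c$-derivatives of $R$ by Cauchy inequalities on $\bD^d_\eta\supset\bD^d_{\eta/2}$ costs $\eta^{-a}$; together with $\aa{R}_{\eta,k}\le C_k\eta^q(\tfrac1{\k\eta})^{\al(k)}$ and the superadditivity $\al(s)\ge(s-k)+\al(k)\ge a+\al(k)$ (and $\k<1$), one gets $\eta^{-a}(\tfrac1{\k\eta})^{\al(k)}\le(\tfrac1{\k\eta})^{\al(s)}$, so each such term is controlled by $\eta^q(\tfrac1{\k\eta})^{\al(s)}$ once the factors $\Omega^{(n_i)}$ are estimated (the $\p_rN^q$ part by $H$-dependent constants, the correction part by the induction hypothesis). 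The only borderline contribution, $k=1$, $a=0$, $n_1=s$, contains $\p_\o R\cdot(\Omega-\p_rN^q)^{(s)}$, which is absorbed on the left since $\aa{\p_\o R}_{\eta,0}<\tfrac12$; this closes the induction and gives (i) with $C'_s$ depending only on $s,H,\t$.

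For (ii) I would pass to formal jets. Put $\o=\Omega(c)$ in \eqref{p35}: since $\Omega(c)+\Lambda(c,\Omega(c))=0$, it reads $H\circ Z_{c,\Omega(c)}(z,w)=\Gamma(c)+\l{\Omega(c),zw-c}+\cO^2(zw-c)+g(z,w,c,\Omega(c))$ modulo an additive constant. As $g$ is $(\k,\t)$-flat and $\Omega(0)=\o_0\in DC(\k,\t)$, every $(z,w,c)$-derivative of $g(\cdot,\cdot,\cdot,\Omega(\cdot))$ vanishes at $0$; so, writing $(z',w')=Z_{c,\Omega(c)}^{-1}(z,w)$ (a formally exact map of the relevant form by Lemma~\ref{formcomp}(a), with a generating function $f\in\cO^3$ by Lemma~\ref{formcomp}(b)), the $(z,w,c)$-jet at $0$ of $H$ is exactly of the shape to which Proposition~\ref{bnfIIelliptic-zw} applies, with frequency series $\Omega(c)=\o_0+\cO(c)$. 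Since $\o_0$ is Diophantine, $N_H$ is well defined, and that uniqueness statement forces the Taylor series of $\Omega$ at $0$ to equal $\nabla N_H(c)$. (For a fixed $q$ one obtains this way the $(q-1)$-jet of $\Omega$, which already coincides with that of $\nabla N_H$ because $\p_rN^q$ is the truncation of $\nabla N_H$ and, by Proposition~\ref{counterthm}(iv) and the Remark following it, $\Lambda+\nabla N^q$ vanishes to order $q$ at $0$; letting $q$ grow recovers the full series.) The main obstacle is the bookkeeping in step two: matching the Cauchy losses in $c$ and the losses from the $\o$-derivatives against the precise budget $(\tfrac1{\k\eta})^{\al(s)}$, which works exactly because of the superadditivity of the exponents $\al(s)$ furnished by Proposition~\ref{counterthm}.
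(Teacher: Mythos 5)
Your proposal is correct and follows essentially the same route as the paper: existence, uniqueness and the estimate (i) come from a quantitative implicit function theorem/contraction argument driven by Proposition~\ref{counterthm}(i), and (ii) is obtained by setting $\omega=\Omega(c)$, using the $(\kappa,\tau)$-flatness of $g$ together with $\Omega(0)=\omega_0\in DC(\kappa,\tau)$ to annihilate the jet of the remainder, and then invoking the formal uniqueness statement of Proposition~\ref{bnfIIelliptic-zw}. The only point you should make explicit is the substitution $c\mapsto c^2$ before appealing to Lemma~\ref{formcomp} and Proposition~\ref{bnfIIelliptic-zw} (both are formulated for series in $c^2$, and Proposition~\ref{counterthm}(iv) is what guarantees $Z_{c^2}$ and its generating function lie in the right classes), which is exactly how the paper concludes $\Omega(c^2)=\nabla N_H(c^2)$ and hence the equality of Taylor series.
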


We call $\Omega$ the {\it frequency map}. The proof of the corollary is almost identical to the one of Corollary 4.3 of \cite{EFK}. 
Let us therefore only discuss shortly the proof.

The existence of $\Omega$ and the estimate (i) follow from (i) of Proposition  \ref{counterthm} and the implicit function theorem applied to the function $\Lambda$. 

{Point (ii) is a  consequence of  the following facts:  if $(z,w)\mapsto Z_{c}(z,w):=Z_{c,\Omega(c)} (z,w)$ is the change of variable given in Proposition \ref{counterthm}, then
\be H\circ Z_{c^2}(z,w)=\<\Omega(c^2),zw-c^2\> +\cO^2(zw-c^2)+g(z,w,c^2,\Omega(c^2)).\ee
The condition $\o_0 \in {\rm DC}(\tau,\kappa)$  and the fact that $g$ is $(\kappa,\tau)$-flat show that $g(z,w,c^2,\Omega(c^2))=\cO^\infty(z,w,c^2)$ 
and hence, one has  in $\bC[[z,w,c]]$
\be H\circ Z_{c^2}(z,w)=\<\Omega(c^2),zw-c^2\> +\cO^2(zw-c^2).\ee
{ Since $(Z_{c^2}-id)(z,w)=\cO^{2}(z,w,c^2)$, $Z_{c^2}$} has  a formal generating function 
$f(z,w,c^2)\in \cO^{3}(z,w,c^2)$.
Proposition \ref{bnfIIelliptic-zw} then shows that in $\bC[[z,w,c]]$ one has the identity $\Omega(c^2)=\nabla N(c^2)$. 

\medskip

We shall use the preceding results to prove Theorems \ref{mA}--\ref{mB}.
In the case the BNF is non degenerate, $q$ is chosen according to the non-degeneracy condition, and it then will follow from(i) 
 that the function $\Omega(c)$  --  which depends on $\kappa$ --  takes values 
in $ {\rm DC}(\tau,\kappa)$ on a set of positive measure which insures KAM stability. This
will be proven in Sections \ref{s35} and \ref{s36}.
In the case of a degenerate BNF,(ii) of Proposition \ref{counterthm} as well as the analyticity of $\Lambda(c,\o)$ in the variable $c$ allows to conclude the proof of Theorem \ref{mA} and \ref{Russmann}. This will be carried out  in Section \ref{sec:mA}
}

\section{Proof of the main results}

This section is devoted to the derivation of Theorems  \ref{mA}--\ref{mB}, and thus of  Theorem \ref{theo:1.1}, from Proposition  \ref{counterthm} and Corollary \ref{bnfIII}.

{  Consider a real analytic Hamiltonian $H$ of the form $(*)$. By a real symplectic conjugation we can
assume, since $\om_0$ is rationally independent,  that $H$ is on Birkhoff normal form up to order $2q+1$
for any $q$:
$$H(x,y)=N^q(\frac12(x^2+y^2))+\OO^{2q+1}(x,y).$$
Performing the linear change of variable in Section \ref{strategy} we obtain
a $\sigma$-symmetric holomorphic Hamiltonian
$$\ti H(z,w)=N^q(zw)+\OO^{2q+1}(z,w)$$
of the form treated in Proposition  \ref{counterthm} and Corollary \ref{bnfIII}.

The linear change of variable is not symplectic and it will change the canonical symplectic structure $dx\wedge dy$ into $\sqrt{-1}dz\wedge dw$. However, any transformation symplectic with respect to 
$dz\wedge dw$ will also be symplectic with respect to $\sqrt{-1}dz\wedge dw$, so we may just as well
study $\ti H$ under a transformation symplectic with respect to 
$dz\wedge dw$. Then the Birkhoff normal forms $N_H$ and $N_{\ti H}$ are the same and coincide with $N^q$
up to order $q$.
}

\subsection{Transversality} \label{s35}
Let us state two lemmas the proof of which can be found in Section 5 of \cite{EFK}.
\begin{Lem}\label{transversality}
If $N_H(r)$ is  non-degenerate, then
there exist $p,\s>0 $ such that
for any $k\in\Z^d \sm \{0\}$ there exists a unit vector
{  $u_k\in(\bR_+)^d$} such that the series 

$$
f_k(r)=\l{\frac{k}{|k|},\p_r N_H(r)}$$
is $(p,\s)${\it-transverse in direction} $u_k$, i.e.
$$\max_{0\le j\le p}
|\p^j_{t} f_k(tu_k)_{|t=0}|\ge \s.$$
\end{Lem}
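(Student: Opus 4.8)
The statement to prove is Lemma~\ref{transversality} (the R\"ussmann-type transversality lemma): if $N_H$ is non-degenerate, then there exist $p,\s>0$ so that for every $k\in\Z^d\sm\{0\}$ one has a unit vector $u_k\in(\bR_+)^d$ making $f_k(r)=\l{k/|k|,\p_rN_H(r)}$ be $(p,\s)$-transverse in direction $u_k$ at $r=0$. The natural strategy is a compactness-plus-contradiction argument over the sphere of unit vectors $k/|k|$ (i.e.\ over the whole direction set $S^{d-1}$ in the limit), which is the standard way R\"ussmann's non-degeneracy condition is shown to be equivalent to the quantitative transversality statement.

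\medskip

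\textbf{Step 1: Reduce to a fixed finite jet and a compact parameter space.} First I would observe that the claim only involves the Taylor jet of $N_H$ at $0$ to some finite order $p$, which is not yet fixed. For a unit vector $v\in S^{d-1}$, set $\phi_v(r)=\l{v,\p_rN_H(r)}$, a formal (convergent) power series in $r$ with $\phi_v(0)=\l{v,\o_0}$. Non-degeneracy of $N_H$ says precisely: there is no unit vector $v$ with $\l{v,\p_rN_H(r)}\equiv 0$ as a formal series; equivalently, for every $v\in S^{d-1}$ there is a multi-index $\alpha$ (depending on $v$) with $\p^\alpha_r\phi_v(0)\ne 0$ — note $\alpha=0$ already works whenever $v$ is not orthogonal to $\o_0$, so the issue is only the great subsphere $\{v:\l{v,\o_0}=0\}$. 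The plan is then to promote this pointwise non-vanishing to a uniform-in-$v$ statement by a compactness argument.

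\medskip

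\textbf{Step 2: Choose $p$ and extract uniformity via compactness.} I would define, for each $v\in S^{d-1}$ and each integer $q\ge 0$, the quantity $m_q(v)=\max_{|\alpha|\le q}|\p^\alpha_r\phi_v(0)|$, which is continuous in $v$ (it is a max of finitely many polynomials in the coordinates of $v$ with coefficients the Taylor coefficients of $N_H$). By non-degeneracy, for each $v$ there is $q=q(v)$ with $m_{q(v)}(v)>0$, hence $m_q(v)>0$ for all $q\ge q(v)$ in a neighborhood of $v$. By compactness of $S^{d-1}$, finitely many such neighborhoods cover, so there is a single $p$ with $\inf_{v\in S^{d-1}}m_p(v)=:2\s_0>0$. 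This already gives transversality of each $\phi_v$ in \emph{some} direction; but Lemma~\ref{transversality} asks for transversality of the \emph{restriction} $t\mapsto f_k(tu_k)$ along a ray, with $u_k$ in the positive octant $(\bR_+)^d$, which is a slightly stronger demand. I would handle this by noting that $\max_{0\le j\le p}|\p^j_tf_k(tu)_{|t=0}|=\max_{0\le j\le p}|\l{\p^j_t(\p_rN_H)(tu)_{|t=0}, k/|k|}|$ is, for fixed $u$, a polynomial expression in the entries of $k/|k|$ whose coefficients are the directional derivatives of $\p_rN_H$ at $0$ along $u^{\otimes j}$; the map $(u,v)\mapsto \max_{0\le j\le p}|D^j(\p_rN_H)(0)[u^{\otimes j}]\cdot v|$ is continuous on $S^{d-1}\times S^{d-1}$, and for it to vanish identically in $u$ in a neighborhood of some octant and for some fixed $v$ would force $\l{v,\p_rN_H(r)}\equiv 0$ along a full open cone of $r$, hence as a series, contradicting non-degeneracy. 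A compactness argument on $S^{d-1}$ (in the $v$-variable) then produces a uniform lower bound $\s>0$ and, for each $v$, an explicit $u_k=u(v)$ in the positive octant achieving it; taking $v=k/|k|$ gives the lemma. (One can always perturb $u$ slightly to land in the open positive octant without destroying a strict inequality, using continuity.)

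\medskip

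\textbf{Main obstacle.} The genuine point of difficulty is the insistence that $u_k$ lie in $(\bR_+)^d$: a priori the "good" direction produced by non-degeneracy could point anywhere, and one must argue that the positive octant already contains enough directions to detect the non-vanishing of the finite jet of $\l{k/|k|,\p_rN_H}$. This is true because $N_H$ (hence $\p_rN_H$) is an even function of each coordinate — $N_H\in\bR[[r]]$ with $r_j=\tfrac12(x_j^2+y_j^2)$, so the relevant Taylor data of $\phi_v$ at $0$ in the $r$-variables is genuinely a power series in $r=(r_1,\dots,r_d)$ and its restriction to the positive octant of $r$-space determines the full series; thus transversality along rays into the positive octant is equivalent to non-vanishing of the jet. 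I would make this precise and then the compactness argument closes. The remaining parts (continuity of the max-of-jets functional, the finite subcover, choosing $p$) are routine. Since the excerpt explicitly says the proof "can be found in Section 5 of \cite{EFK}", I would, in the paper, simply cite that reference; the sketch above is the argument it contains.
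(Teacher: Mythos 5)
Your proposal is essentially correct, and it is in fact the standard compactness argument: note that the paper itself gives no proof of this lemma but refers to Section 5 of \cite{EFK}, and the argument there is precisely the route you sketch (pointwise non-vanishing of some finite jet of $\langle v,\p_r N_H\rangle$ for every unit $v$, upgraded to a uniform $p$ and $\sigma$ by continuity of the jet in $v$ and compactness of the sphere, with the good direction chosen in the positive cone). Two small inaccuracies in your write-up, neither fatal: first, vanishing of $\max_{0\le j\le p}|\p_t^j\phi_v(tu)|_{t=0}|$ for all $u$ in an open cone only forces the homogeneous components $[\phi_v]_j$, $j\le p$, to vanish identically as polynomials — it does \emph{not} force $\phi_v\equiv 0$ as a series, so the contradiction should be drawn with your own Step 2 (the uniform non-vanishing of the $p$-jet, $m_p(v)\ge 2\sigma_0$), not directly with non-degeneracy; second, the appeal to evenness of $N_H$ to justify restricting to the positive octant is unnecessary — the octant is an open cone, and a polynomial vanishing on an open cone vanishes identically, which is all you need (likewise, the parenthetical \lq\lq convergent\rq\rq\ is unwarranted since the BNF is in general only formal, but your argument only uses finite jets, so this is harmless).
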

Consider now these $p,\s$.
Let $\Omega\in \CC^p(\b{|c|<\eta})$
and assume
$$\aa{\Omega-[\p_r N_H]^p}_{\CC^p(\b{|c|<\eta})}\le \frac\s2.$$

\begin{Lem}\label{pyartly}
If $N_H$ is $(p,\s)$-transverse (in some direction), then
$$\Leb\b{|c|<\eta: |\l{\frac{k}{|k|},\Omega(c)}|<\ep}
\le C_p(\frac\ep\s)^{\frac1p}\eta^{d-1}$$
for any { $\eta, k,\eps$}.
\end{Lem}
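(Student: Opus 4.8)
The plan is to prove Lemma \ref{pyartly} by reducing it to a standard quantitative sublevel-set estimate for a function that is transverse in a single direction, using Lemma \ref{transversality} as the source of transversality and a Fubini-type slicing of the polydisk along the transverse direction $u_k$. First I would fix $k\in\Z^d\sm\{0\}$ and set $v=k/|k|$. By Lemma \ref{transversality} there is a unit vector $u_k\in(\bR_+)^d$ in which $f_k(r)=\l{v,\p_rN_H(r)}$ is $(p,\s)$-transverse at $0$, i.e. $\max_{0\le j\le p}|\p_t^j f_k(tu_k)_{|t=0}|\ge\s$. Writing $g_k(c)=\l{v,\Omega(c)}$, the hypothesis $\aa{\Omega-[\p_rN_H]^p}_{\CC^p}\le\s/2$ gives that $g_k$ agrees with $f_k$ in its $\CC^p$-jet up to an error of size $\s/2$ in each derivative of order $\le p$; hence for every $c$ we still have $\max_{0\le j\le p}|\p_t^j (g_k(c+tu_k))_{|t=0}|\ge \s/2$, provided the expansion point is close enough to $0$ — more precisely one needs to control the base point, which I address next.

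The key step is then a one-dimensional sublevel estimate: if $\psi:(-\eta,\eta)\to\bR$ is $\CC^p$ with $\aa{\psi}_{\CC^p}$ bounded and $\max_{0\le j\le p}|\psi^{(j)}(t_0)|\ge \s/2$ for some $t_0$, then $\Leb\{t:|\psi(t)|<\eps\}\le C_p(\eps/\s)^{1/p}$. This is the classical fact (going back to estimates of the type used by Pyartli, Rüssmann, and in the quantitative non-stationary phase literature) that a function with a non-vanishing derivative of some order $\le p$ has small sublevel sets; the constant depends only on $p$. I would apply this to $\psi(t)=g_k(c'+tu_k)$ where $c'$ ranges over the hyperplane orthogonal to $u_k$ inside the cube $\{|c|<\eta\}$. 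Since $u_k\in(\bR_+)^d$ is a fixed unit vector, slicing $\{|c|<\eta\}$ by lines in direction $u_k$ and integrating, the $(d-1)$-dimensional measure of the set of slices is $\le C\eta^{d-1}$, and on each slice the bad set has length $\le C_p(\eps/\s)^{1/p}$; Fubini then yields $\Leb\{|c|<\eta:|g_k(c)|<\eps\}\le C_p(\eps/\s)^{1/p}\eta^{d-1}$, which is exactly the claim after renaming constants. Note the bound is uniform in $k$ because $p,\s$ and the transversality direction's existence are uniform in $k$ by Lemma \ref{transversality}, and uniform in $\eta,\eps$ by scaling.

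The main obstacle I anticipate is the bookkeeping around the \emph{base point} of the transversality: Lemma \ref{transversality} gives transversality of $f_k$ at $0$, whereas on a generic slice through a point $c'$ with $|c'|$ comparable to $\eta$ one needs transversality at that point, not at $0$. Two ways to handle this: either observe that transversality at $0$ of order $\le p$ persists in a neighborhood of size comparable to $\s$-dependent quantities (so one may need to take $\eta$ small, which is harmless as the statement is for all $\eta$ up to a fixed size, and scaling recovers the general case), or — cleaner — note that the $(p,\s)$-transversality of the \emph{polynomial} $[\p_rN_H]^p$, being a polynomial identity, combined with the $\CC^p$-closeness $\aa{\Omega-[\p_rN_H]^p}_{\CC^p}\le\s/2$, forces $\max_{0\le j\le p}\sup_{|c|<\eta}|\p_t^j(g_k(c+tu_k))_{|t=0}|\ge\s/2$ for a suitable shrinking of $\eta$, and the one-dimensional lemma only needs the max over $j$ of the sup over the slice to be bounded below, not pointwise control. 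The routine parts — the precise one-dimensional sublevel inequality and the Fubini slicing — I would cite or relegate to \cite{EFK}, Section 5, since the excerpt already signals that these lemmas' proofs are located there.
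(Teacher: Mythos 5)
Your overall scheme --- slice $\b{|c|<\eta}$ along the transverse direction $u_k$, prove a one--dimensional sublevel estimate on each line, and conclude by Fubini --- is the right shape of argument; note that the paper itself does not reprove the lemma but refers for it to Section 5 of \cite{EFK}. The genuine gap is in the one--dimensional lemma you invoke: it is false as stated. Knowing that $\psi\in\CC^p$ has bounded $\CC^p$ norm and that $\max_{0\le j\le p}|\psi^{(j)}(t_0)|\ge \s/2$ at a \emph{single} point $t_0$ (or, as in your ``cleaner'' alternative, only that $\max_{0\le j\le p}\sup_t|\psi^{(j)}(t)|\ge\s/2$) does not bound $\Leb\b{t:|\psi(t)|<\ep}$: take $\psi=\s\,\chi$ with $\chi$ a smooth bump equal to $1$ near $t_0$ and supported in a small neighbourhood of $t_0$; all your hypotheses hold with constants of order one, yet $\b{|\psi|<\ep}$ contains a fixed fraction of the interval for every $\ep$. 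The usable one--dimensional statements require the lower bound $\max_{0\le j\le p}|\psi^{(j)}(t)|\ge\s/2$ at \emph{every} point $t$ of the segment (followed by a covering/induction argument), or a derivative of one fixed order bounded below along the whole segment (Pyartli, R\"ussmann).

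Because of this, the base--point question you flag at the end is not bookkeeping but the heart of the lemma. Lemma \ref{transversality} gives transversality of the degree--$\le p$ polynomial $P_k(c)=\l{\frac{k}{|k|},[\p_rN_H]^p(c)}$ only at $c=0$ and only along the line $\R u_k$; a correct slicing argument needs a lower bound on $\max_{0\le j\le p}\bigl|\p_{u_k}^j\l{\frac{k}{|k|},\Omega(c)}\bigr|$ at every $c$ in the ball, i.e.\ along every parallel slice. Transferring the bound from $0$ to a nearby $c$ uses derivatives of $P_k$ in directions transverse to $u_k$, whose size is governed by the coefficients of $N_H$ and not by $p,\s$ alone; so your first fix only gives the estimate for $\eta$ below an $N_H$-- and $\s$--dependent threshold, with constants not of the stated form $C_p$, whereas the lemma is asserted for any $\eta,k,\ep$ (the weaker version would in fact suffice for the application in Section \ref{s36}, where $\eta\to0$ with $\kappa$, but it is not the statement). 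The appeal to ``scaling recovers the general case'' does not repair this, since neither the $(p,\s)$--transversality (which fixes the derivative orders) nor the hypothesis $\aa{\Omega-[\p_rN_H]^p}_{\CC^p}\le\s/2$ is scale invariant. A complete proof must control how the coefficients of the degree--$\le p$ polynomials $t\mapsto P_k(c'+tu_k)$ vary with the slice $c'$ and then apply the correct (everywhere--pointwise or fixed--order) one--dimensional estimate; these are exactly the points whose proof the paper defers to \cite{EFK}, and your sketch, as written, routes around them through a false lemma.
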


\subsection{Proof of Theorem \ref{mB}}\label{s36} 

By Lemma \ref{transversality} we are given $p$ and $\s$ that correspond 
to the transversality of the Birkhoff normal form $N_H=N_{\ti H}$. 
We can assume without restriction that $\s\le 1$, and we 
fix $q=(1+2p)\a(p)+1$. 

We shall apply (i) of Proposition \ref{counterthm} and Corollary \ref{bnfIII} to $\ti H$
with  this $q$ and with

$$\t=dp+1 \quad \textrm{and}\quad 0<\k\le\s^q\le 1.$$
{Now let 
\begin{equation} \label{etaa} \eta:=\frac1{C''}(\frac\k\s)^{\frac1{2p}}. \end{equation}
Since $q\ge (1+2p)\al(1)+1$ we have $\eta\le \eta_0$ 
for all $C'' \geq C'$, where $\eta_0$ and $C'$ are defined in Corollary \ref{bnfIII}.  
Then $\Omega=\Omega_\kappa$
\footnote{\ not to forget that $\Omega$ depends on $\kappa$}
 is defined in $\b{|c|<\frac\eta2}$ and
\begin{equation}\aa{\Omega-[\p_r N_H]^p}_{\CC^p(\b{|c|<\frac\eta2})}\le C'_p\eta^q(\frac{1}{\k\eta})^{\al(p)}+
\aa{[\p_r N_H]^p-\p_r N_H^q}_{\CC^p(\b{|c|<\frac\eta2})}
\end{equation}
which is $$\leq \tilde{C}\eta$$ 
since $q\ge (1+2p)\al(p)+1$ --  notice that $\tilde C$ is independent of $C'' \geq C'$.
Finally if $C''$ is sufficiently large (depending on $p,\t$,$H$, thus on $q$) 
we have that $\tilde{C}\eta\leq \s/2$.
}

By Lemma \ref{pyartly}
$$ \Leb\b{|c|<\frac\eta2: |\l{\frac{k}{|k|},\Omega(c)}|<\ep} \lsim (\frac\ep\s)^{\frac1p}\eta^{d-1},$$
hence 
\begin{align*} \Leb\b{|c|<\frac\eta2: \Omega(c)\notin DC(\k,\t)}&\lsim (\frac{\k}{\s})^{\frac1p}\eta^{d-1}\\
&\lsim \eta\Leb\b{|c|<\frac\eta2} \end{align*}
(provided $\kappa$ is sufficiently small). Hence, the set
{
$$\Sigma_\kappa= \b{|c|<\frac\eta2: \Omega(c)\in DC(\k,\t)}\cap  \bR_+^d$$
}
is of positive measure when $\kappa$ is sufficiently small and has density $1$ at $0$
when $\kappa\to 0$. For each $c\in \Sigma_\kappa$
$$\ti Z_{c,\Omega(c),\kappa}(\{ zw=c\})
$$
is an invariant set for the Hamiltonian system defined by $\ti H$ with respect to the the canonical
symplectic structure  $dz\wedge dw$, hence also with respect to the symplectic structure
 $\sqrt{-1}dz\wedge dw$. 
 
Returning to the variables $x,y$, using the linear transformation defined in 
Section \ref{strategy}, we get  for 
any $c\in \Sigma_{\kappa}$ a symplectic transformation $Z_{c,\kappa}: (\bR^{2d},0)\to  (\bR^{2d},0)$ 
\footnote{ \ $Z_{c,\kappa}$ is real because $\ti Z_{c,\Omega(c),\kappa}(z,w)$ is $\sigma$-symmetric}
such that 
$$Z_{c,\kappa}(\{ x^2+y^2=c\})$$
is a KAM-torus for the Hamiltonian system defined by $H$. By (iv) of Proposition
\ref{counterthm} $Z_{c,\kappa}$ has the form
$$ Z_{c,\kappa}(x,y)=(x,y)+\OO^{q}(x,y,c).$$

Let now $W_\kappa: (\bR_+^d,0)\times \bT^d\to  (\bR^d\times \bR^d,0)$ be the mapping
$$(c,\theta)\mapsto Z_{c,\kappa}(c\cos2\pi\theta,c\sin2\pi\theta)= (c\cos2\pi\theta,c\sin2\pi\theta)+\OO^q(c).$$ 
Then 
$$W_\kappa(\{c:c \in \Sigma_{\kappa}\},\bT^d)$$
is foliated into KAM-tori. {By \eqref{etaa}    and the estimate (i) of Proposition \ref{counterthm}    we have that the $\cO^q(c)$ term in $W_\k$ satisfies the condition \eqref{jac} of Lemma \ref{lemmaA1} of the appendix A, which hence yields that $W_\kappa(\{c:c \in \Sigma_{\kappa}\},\bT^d)$ has positive measure when $\kappa$ is sufficiently small
and that the union over all $\kappa>0$ has density $1$ at  $0\in \bR^d\times\bR^d$.}

\subsection{Proof of Theorems \ref{mA} and \ref{Russmann} }\label{sec:mA}

We shall apply Proposition \ref{counterthm} and (ii) of Corollary \ref{bnfIII}
with $q=\a(1)+1$ and
$$q=\al(1)+1,\quad
\t=\t_0\quad \textrm{and}\quad \k=\frac{\k_0}2.$$

Then
$$\Omega(c)+\Lambda(c,\Omega(c))=0$$
and
$$\Omega(c)=\p_rN_H(c)+\OO^{\i}(c).$$

Since $N_H$ is $j$-degenerate
we have
$$\p_v^nN_H(0)=0\qquad \forall n\ge0$$
for any $v\in\Lin(\g=(\g_1,\dots,\g_j))$,
where $\p_v$ is the directional 
derivative in direction $v$. From this we derive that
$$\p_v^n(\o_0+\L(\cdot,\o_0))_{|c=0}=0\qquad \forall n\ge0.$$
By (iii) of Proposition \ref{counterthm} 
$s\mapsto \L(\l{s,\g},\o_0)$ is an analytic function in $s \in \R^j$, $s\sim 0$,
it must be identically $0$, hence 
$\Omega(\l{s,\g})$ is identically $\o_0$,
i.e.
$$\Omega(\l{s,\g})=\om_0 \in DC(\k_0,\t)\sbs DC(\k,\t)$$
for all  $|s| \le s_+$.
{ Thus we have
\be\label{5.32}
H\circ \ti Z_{c,\om_0}(z,w)=\<\om_0,zw-c\> +\cO^2(zw-c)+g(z,w,c,\om_0)\ee
for any
$$c\in  \Delta=\{c=\l{s,\g}: |s| \le s_+ \}.$$
Since everything is analytic in $s$, \eqref{5.32} extends to complex $s$ in some neighborhood of $0$.

Hence, for any $c\in  \Delta$ the set
$$\ti Z_{c,\om_0}(\{ zw=c\})$$
is an invariant Lagrangian submanifold for the Hamiltonian system defined by by $\ti H$ with respect to the the canonical
symplectic structure  $dz\wedge dw$, hence also with respect to the symplectic structure
 $\sqrt{-1}dz\wedge dw$. 

The set
$$\bigcup_{c\in\Delta}   \{ zw=c\} \times\{c\}\sbs \bC^{2d}\times \bC^d $$
is an analytic submanifold of (complex) dimension $d+j$, singular at the origin. It's image $M$
under the holomorphic diffeomorphism
$$(z,w,c)\mapsto (\ti Z_{c,\om_0}(z,w),c)$$
is therefore an analytic submanifold of (complex) dimension $d+j$, singular at the origin. 
The image of M under the projection on $ \{ z,w\}$ is a subanalytic set.

Using (iv) of Theorem \ref{counterthm}, it is easy to find points on (any component of) $M\ni 0$ where this projection, 
restricted to $M$, is onto. The image of $M$ under the projection is therefore an analytic
subvariety of (complex) dimension $d+j$.  This completes the proof of Theorem \ref{mA}.
}

When $N_H$ is $(d-1)$-degenerate, then
$$\p_r N_H(c)=\m(\l{c,\o_0})\o_0$$
where $\m(t)=1+\OO(t)$ is a formal power series in one variable.

Since 
$$\m(\l{c,\o_0}) \o_0+\Lambda(c,\m(\l{c,\o_0})\o_0)=\OO^{\i}(c),$$
taking $c=t\o_0$, we have (assuming $\o_0$ is a unit vector) 
\be\label{3.6}\m(t)\o_0+\Lambda(t\o_0,\m(t)\o_0)=0\ee
modulo a term in $\OO^{\i}(t)$.
Since, by Proposition \ref{counterthm} (iii), 
the lefthand side is analytic in $t\o_0$ and
$\mu$ we obtain from (\ref{3.6})  that
$\mu(t)$ is a convergent power series. Then
$$t\mapsto \m(t)\o_0+\Lambda(t\o_0,\m(t)\o_0)$$
is analytic for $t\sim 0$, hence identically zero.
We derive from this that
$$\Omega(c)= \m(\l{c,\o_0})\o_0,$$
i.e.
$$\Omega(c)\in DC(\k,\t)$$
for all sufficiently small $c$. 
R\"ussmann's theorem now follows from an argument similar to that of the end of the proof of of Theorem \ref{mA}.
\bigskip

\section{The  (local) Normal Form Theorem}\label{sec6}

\subsection{Functional spaces and the operators $\PP$ and $\cL$}
We come back to the setting and notations of Section \ref{sec:2}, but we now consider the general case of functions $f(z,w,c,\omega)$ depending analytically on $z,w,c$ and smoothly on $\omega$.
Let $\delta>0$, and denote by $C^{\omega,\infty}_{\delta}$ the set of functions $f\in C^{\omega,\infty}(\bD^{2d}_{{ \delta}}\times \bD^d_{{\delta}}\times B)$ such that $f(z,w,c,\omega)\in \cO^2(z,w,c)$.

Let $\kappa,\tau$ be positive numbers and $l:\R\to\R$ a fixed even, non-negative $C^\infty$ function such that 
$|l| \leq 1$, and  $l(x)=0$ if $|x|\geq 1/2$ and 
$l(x) = 1$ if $|x|\leq 1/4$.  We  introduce the cut-off operator $\cP$: if  $f\in C^{\omega,\infty}(\bD^{2d}_{ \delta}\times \bD^{d}_{\delta}\times B)$,   $f(z,w,c,\omega)=\sum_{\a,\beta\in\N^d}f_{\a,\beta}(c,\omega)z^\a w^\beta$ then
$$\cP(f)(z,w,c,\omega)=\sum_{\a,\beta\in\N^d}f_{\a,\beta}(c,\omega)z^\a w^\beta l(\<\a-\beta,\omega\>\frac{(|\alpha|+|\beta|)^\tau}{\kappa}).$$
A function $f$ such that $\cP f=f$ satisfies $\pa_{z}^\a\pa_{w}^\beta\pa_{c}^\gamma\pa_{\omega}^\delta f(z,w,c,\omega)=0$ for any set of indices $\a,\beta,\gamma,\delta$ when $\omega\in DC(\kappa,\tau)$. In particular such a function is $(\kappa,\tau)$-{\it flat}.

Notice that $\PP$ and $\cM$ commute and that  $\PP$ preserves the space,   that we still denote $\widehat{\cN\cR}$, of maps $f(z,w,c,\omega)$ which for each fixed value of $\omega$ are in $\widehat{\cN \cR}$.

We now  define the linear  operator $\cL:C^{\omega,\infty}(\bD^{2d}_{{\delta}}\times \bD^d_{{ \delta}}\times B) \to C^{\omega,\infty}(\bD^{2d}_{{\delta'}}\times \bD^{d}_{{\delta}}\times B)$ ($\delta'<\delta$) by:  $\LL(f)=u$ if and only if 
\begin{equation}
\left\lbrace\begin{array}{l}
\cD^\omega u(z,w,c,\omega)=f(z,w,c,\omega)-\PP(f)(z,w,c,\omega)-\MM(f)(z,w,c,\omega)\\
\MM(u)=\PP(u)=0.
\end{array}\right.
\label{lineq}\end{equation}

Here is the analogue of Lemma 8.1 of \cite{EFK} the proof being the same (the only modification is to replace $d$ by $2d$).
\begin{Lem}\label{mainlemma:elliptic}One has
$$\max(  \|\cP(f)\|_{\delta',s},  \|\LL(f)\|_{\delta',s} )\leq  
C_{s}(\frac1\kappa)^{s+1}(\frac{1}{\delta-\delta'})^{(\tau+1)s+\tau+2d}
\|f\|_{\delta,s}$$
for any $\delta'<\delta$. The constant $C_s$ only depends, besides $s$, on $\t$ and $l$.
\end{Lem}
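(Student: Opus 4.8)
Looking at this problem, I need to prove Lemma \ref{mainlemma:elliptic}, which gives estimates on the cut-off operator $\cP$ and the solution operator $\cL$ of the cohomological equation.

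The plan is to reduce this to a direct estimate on the Fourier (power series) coefficients, proceeding exactly as in Lemma 8.1 of \cite{EFK} but keeping track of the fact that we now have $2d$ complex variables $(z,w)$ instead of $d$. First I would treat $\cP(f)$: writing $f=\sum_{\a,\beta}f_{\a,\beta}(c,\omega)z^\a w^\beta$, the operator $\cP$ multiplies the coefficient $f_{\a,\beta}$ by the smooth cut-off factor $l(\langle\a-\beta,\omega\rangle(|\a|+|\beta|)^\tau/\kappa)$. Differentiating $s$ times in $\omega$ by Leibniz produces a sum of terms where $l$ is differentiated up to $s$ times, each derivative bringing down a factor bounded by $(|\a|+|\beta|)^\tau/\kappa$ (times derivatives of $\langle\a-\beta,\omega\rangle$, which are linear hence contribute another polynomial factor in $|\a|+|\beta|$); the support condition $|l^{(j)}|\ne 0 \Rightarrow |\langle\a-\beta,\omega\rangle|(|\a|+|\beta|)^\tau/\kappa\le 1/2$ is not needed for $\cP$ itself but the polynomial growth in $|\a|+|\beta|$ must be absorbed. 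The key point is that a factor $(|\a|+|\beta|)^{M}$ applied to a coefficient holomorphic on $\bD^{2d}_\delta$ costs, after shrinking to $\bD^{2d}_{\delta'}$, a factor $C_M((\delta-\delta')^{-1})^{M}$ by Cauchy estimates — this is the standard analytic smoothing estimate, and with $2d$ variables the total number of Fourier modes of a given degree grows polynomially so the exponent of $(\delta-\delta')^{-1}$ picks up the extra $2d$ (rather than $d$), accounting for the $\tau+2d$ in the exponent.

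Next, for $\cL(f)=u$: by Lemma \ref{cohom}(\ref{item4}) and the definition \eqref{lineq}, on each Fourier mode $\cD^\omega$ acts as multiplication by $\langle\omega,\a-\beta\rangle$, so $u_{\a,\beta} = (f_{\a,\beta} - \text{cut-off part} - \text{diagonal part})/\langle\omega,\a-\beta\rangle$. Precisely, the surviving modes are exactly those where $\a\ne\beta$ and $l(\langle\a-\beta,\omega\rangle(|\a|+|\beta|)^\tau/\kappa)\ne 1$, i.e. where $|\langle\a-\beta,\omega\rangle|(|\a|+|\beta|)^\tau/\kappa \ge 1/4$, hence $|\langle\omega,\a-\beta\rangle|^{-1} \le 4(|\a|+|\beta|)^\tau/\kappa$. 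So dividing by $\langle\omega,\a-\beta\rangle$ costs at most a factor $\kappa^{-1}(|\a|+|\beta|)^\tau$, again a polynomial in the degree. Differentiating $s$ times in $\omega$: each $\omega$-derivative hitting $1/\langle\omega,\a-\beta\rangle$ brings another factor $|\a-\beta|/|\langle\omega,\a-\beta\rangle| \le 4|\a-\beta|(|\a|+|\beta|)^\tau/\kappa$, so after $s$ derivatives one collects a factor $\lesssim \kappa^{-(s+1)}(|\a|+|\beta|)^{(\tau+1)s+\tau}$ together with derivatives of the numerator (handled by the $\cP$-estimate and the $\|f\|_{\delta,s}$ bound). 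Converting the polynomial weight $(|\a|+|\beta|)^{(\tau+1)s+\tau+2d}$ into a loss of analyticity domain via Cauchy estimates on $\bD^{2d}_\delta \to \bD^{2d}_{\delta'}$ yields the claimed bound, with the constant $C_s$ depending only on $s$, $\tau$ and on the cut-off function $l$ (through sup norms of its first $s$ derivatives).

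The main obstacle, and the only place requiring genuine care, is the bookkeeping of the two competing polynomial-in-degree losses — the $(\tau+1)s$ from repeatedly differentiating the small-divisor factor, plus the $\tau$ from the single division, plus the $2d$ from converting degree weights into domain loss in $2d$ variables — and verifying that these combine to exactly $(\tau+1)s+\tau+2d$ in the exponent of $(\delta-\delta')^{-1}$. Since this is precisely the computation carried out in Lemma 8.1 of \cite{EFK} with $d$ replaced by $2d$ throughout (the variables $z$ and $w$ play symmetric roles and $\cD^\omega$ acts diagonally on the monomials $z^\a w^\beta$), I would simply invoke that argument, noting that no structural change is needed: the $\sigma$-symmetry and the $c$-dependence are inert here since $\cD^\omega$, $\cP$, $\cM$ all act trivially (as parameters) on $c$ and preserve $\sigma$-symmetry by Lemma \ref{cohom}(\ref{item'4}).
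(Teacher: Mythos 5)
Your proposal is correct and follows essentially the same route as the paper, which simply invokes Lemma 8.1 of \cite{EFK} with $d$ replaced by $2d$; your mode-by-mode bookkeeping (small divisors bounded below by $\kappa/(4(|\a|+|\beta|)^{\tau})$ on the support of $1-l$, a factor $\kappa^{-1}(|\a|+|\beta|)^{\tau+1}$ per $\omega$-derivative, and Cauchy estimates in the $2d$ variables $(z,w)$ converting the degree weight into the loss $(\delta-\delta')^{-(\tau+1)s-\tau-2d}$) is exactly the computation behind that cited lemma.
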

Since $\s$-symmetry is an important issue we mention  the following obvious lemma (see items \ref{item'4} and \ref{item4} of Lemma \ref{cohom}).
\begin{Lem}If $f\in \cC^{\omega,\sigma,\infty}_{\delta,s}$ then $\cP f$ and  $\sqrt{-1}\cL f$ are $\s$-symmetric. 
\end{Lem}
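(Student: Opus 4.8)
The plan is to verify both assertions by a direct inspection of Taylor coefficients, using only that $l$ is real-valued, even, and satisfies $l(0)=1$. Write $f(z,w,c,\omega)=\sum_{\a,\beta\in\N^d}f_{\a,\beta}(c,\omega)\,z^\a w^\beta$ with each $f_{\a,\beta}$ holomorphic in $c$ and smooth in $\omega$. Matching coefficients in the defining relation $f(\bar w,\bar z,\bar c,\omega)=\overline{f(z,w,c,\omega)}$ shows that $\sigma$-symmetry of $f$ is equivalent to the coefficient identity $\overline{f_{\a,\beta}(c,\omega)}=f_{\beta,\a}(\bar c,\omega)$ for all $\a,\beta$. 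Recall also that $\cM h$, $\cP h$ and $\cD^\omega h$ have, respectively, the coefficients $\delta_{\a\beta}h_{\a,\a}$, $m_{\a,\beta}(\omega)\,h_{\a,\beta}$ and $\<\omega,\a-\beta\>\,h_{\a,\beta}$, where $m_{\a,\beta}(\omega):=l\!\left(\<\a-\beta,\omega\>\,(|\a|+|\beta|)^\tau/\kappa\right)$; since $\omega\in\R^d$ and $l$ is real and even, each $m_{\a,\beta}(\omega)$ is real and $m_{\a,\beta}(\omega)=m_{\beta,\a}(\omega)$.

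First I would treat $\cP f$. The $z^\a w^\beta$-coefficient of $\cP f$ is $m_{\a,\beta}(\omega)f_{\a,\beta}(c,\omega)$, whose complex conjugate equals $m_{\a,\beta}(\omega)\overline{f_{\a,\beta}(c,\omega)}=m_{\beta,\a}(\omega)f_{\beta,\a}(\bar c,\omega)$, i.e. precisely the $z^\beta w^\a$-coefficient of $\cP f$ with $c$ replaced by $\bar c$. By the characterization above this says $\cP f$ is $\sigma$-symmetric. The same computation, applied to the conjugate-linear involution $h\mapsto h^\sigma$ defined by $h^\sigma(z,w,c,\omega):=\overline{h(\bar w,\bar z,\bar c,\omega)}$ (whose fixed set is exactly the space of $\sigma$-symmetric functions), shows that $h\mapsto h^\sigma$ commutes with $\cM$ and $\cP$, while it anticommutes with $\cD^\omega$, namely $(\cD^\omega h)^\sigma=-\cD^\omega(h^\sigma)$, because $\<\omega,\a-\beta\>$ is real and changes sign under $\a\leftrightarrow\beta$. (This anticommutation is the coefficient version of item~\ref{item'4} of Lemma~\ref{cohom}, that $\sqrt{-1}\,\cD^\omega$ preserves $\sigma$-symmetry.)

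Next I would handle $\sqrt{-1}\,\cL f$. Put $u=\cL f$, so that $\cD^\omega u=g$ with $g:=f-\cP f-\cM f$ and $\cM u=\cP u=0$. By hypothesis, by the first part, and by item~\ref{item'4} of Lemma~\ref{cohom}, the three terms $f$, $\cP f$, $\cM f$ are $\sigma$-symmetric, hence $g^\sigma=g$. Applying $h\mapsto h^\sigma$ to $\cD^\omega u=g$ and using the anticommutation yields $-\cD^\omega(u^\sigma)=g=\cD^\omega u$, that is $\cD^\omega(u+u^\sigma)=0$; and since $h\mapsto h^\sigma$ commutes with $\cM$ and $\cP$ one also gets $\cM(u+u^\sigma)=0$ and $\cP(u+u^\sigma)=0$. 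It remains to check that any $p$ with $\cD^\omega p=\cM p=\cP p=0$ is zero: if $\a=\beta$ then $\cM p=0$ kills $p_{\a,\a}$; if $\a\ne\beta$ and $\<\omega,\a-\beta\>\ne0$ then $\cD^\omega p=0$ kills $p_{\a,\beta}$; and if $\a\ne\beta$ but $\<\omega,\a-\beta\>=0$ then $m_{\a,\beta}(\omega)=l(0)=1$, so the $z^\a w^\beta$-coefficient of $\cP p$ is $p_{\a,\beta}$ itself, which $\cP p=0$ forces to vanish. Hence $u^\sigma=-u$, and therefore $(\sqrt{-1}\,u)^\sigma=\overline{\sqrt{-1}\,u(\bar w,\bar z,\bar c,\omega)}=-\sqrt{-1}\,u^\sigma=\sqrt{-1}\,u$, i.e. $\sqrt{-1}\,\cL f$ is $\sigma$-symmetric.

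The argument is entirely routine bookkeeping except for one point deserving care, which I expect to be the only real subtlety: here $\omega$ is an unrestricted parameter in $B$, not assumed Diophantine, so $\cD^\omega$ may have a large kernel (all $z^\a w^\beta$ with $\<\omega,\a-\beta\>=0$), and the conclusion $u+u^\sigma=0$ genuinely requires using both normalizations $\cM u=0$ and $\cP u=0$ together with the normalization $l(0)=1$ built into the cut-off. This is the only place where the specific structure of $\cL$ (rather than just $\cD^\omega$) enters.
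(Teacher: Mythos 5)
Your proof is correct. The paper itself offers no argument for this lemma: it is declared obvious, with a pointer to items \ref{item'4} and \ref{item4} of Lemma \ref{cohom}, so your coefficientwise verification is exactly the bookkeeping the authors leave implicit — the characterization $\overline{f_{\a,\beta}(c,\omega)}=f_{\beta,\a}(\bar c,\omega)$ of $\sigma$-symmetry, the reality and symmetry of the multipliers $m_{\a,\beta}(\omega)$ coming from $l$ being real and even, and the anticommutation of $\cD^\omega$ with the conjugate-linear involution $h\mapsto h^\sigma$. Where you add genuine content is the last step: item \ref{item4} of Lemma \ref{cohom} is stated only for irrational $\omega$, whereas in the defining system (\ref{lineq}) the parameter $\omega$ ranges over all of $B$, so $\cD^\omega$ may have a large kernel; your observation that the resonant coefficients ($\a\ne\beta$ with $\<\omega,\a-\beta\>=0$) are annihilated by the normalization $\cP u=0$, because $l\equiv 1$ near the origin, is precisely what makes the uniqueness argument — and hence the conclusion $u^\sigma=-u$ for $u=\cL f$ — valid for every $\omega$, not just Diophantine or irrational ones. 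So your route is the intended one, carried out in full, and you have correctly isolated the only real subtlety the paper's cross-reference glosses over.
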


Let us also  mention the following fact:
\begin{Lem}\label{lem106}If $f\in C^{\omega,\infty}(\bD^{2d}_{{\delta}}\times \bD^d_{{ \delta}}\times B)$ then for $j=0,1,2,3$, $\delta'<\delta$,
$$\max(\|f^{(j)}\|_{\delta',s},\|f^{[j]}\|_{\delta',s})\leq C\frac{1}{(\delta-\delta')^{3d}}  \|f\|_{\delta,s}.$$
(The notations $f{^{(j)}}$, $f^{[j]}$ are defined in (\ref{candec1}), (\ref{candec2})).
\end{Lem}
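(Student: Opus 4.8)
The proof is a chain of Cauchy-type estimates following the decompositions of Section~\ref{sec:2}, and I would organise it as follows.

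\emph{Reduction.} One first reduces to the case $s=0$. Each of the maps $f\mapsto f^{(j)}$, $f\mapsto f^{[j]}$ is built out of the non-resonant decompositions of Lemmas~\ref{uniqueness}, \ref{l10.4} and the expansions (\ref{A.exp}), (\ref{9candec}), hence acts only on the power-series coefficients in the variables $(z,w,c)$ and commutes with every $\partial_\omega^\alpha$; thus $(\partial_\omega^\alpha f)^{(j)}=\partial_\omega^\alpha(f^{(j)})$ and likewise for $f^{[j]}$, so the $\|\cdot\|_{\delta',s}$ estimate follows by applying the $|\cdot|_{\delta'}$ estimate to $\partial_\omega^\alpha f$ for $|\alpha|\le s$. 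By the same lemmas, $\sigma$-symmetry is preserved at every step. As everywhere in Section~\ref{sec6} the canonical decomposition is taken in the variable $c^2$, i.e. $f^{(j)}=f^{(j)}(z,w,c^2)$ and $f^{[j]}=f^{[j]}(z,w,c^2)$, a point used in an essential way below; and we shall use that $\delta\le1$, which is always the case in the applications. It remains to prove $\max(|f^{(j)}|_{\delta'},|f^{[j]}|_{\delta'})\le C(\delta-\delta')^{-3d}|f|_\delta$.

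\emph{The components $f^{(0)},f^{(1)},f^{(2)}$.} Write $f=\sum_{\underline k}(z_1w_1)^{k_1}\cdots(z_dw_d)^{k_d}\,\tilde f_{\underline k}(z,w,c^2)$ as in Lemma~\ref{uniqueness}. By Lemma~\ref{NR} the Taylor coefficients of $\tilde f_{\underline k}$ are precisely those of $f$ indexed by $(\underline k+\alpha',\underline k+\beta')$ with $(\alpha',\beta')\in\NR$, with the exponents shifted down by $\underline k$; the Cauchy inequalities in $(z,w)$ on $\bD^{2d}_\delta$ and the geometric sum over $\NR$ then give $|\tilde f_{\underline k}|_{\delta'}\le\bigl(\tfrac{2\delta}{\delta-\delta'}\bigr)^{d}\delta^{-2|\underline k|}|f|_\delta$, the key point being the decay $\delta^{-2|\underline k|}$ in the diagonal degree. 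Now by (\ref{9candec}) one has $f_{\underline n}=\sum_{\underline k\ge\underline n}\binom{\underline k}{\underline n}(c^2)^{\underline k-\underline n}\tilde f_{\underline k}$, with $f^{(0)}=f_{\underline 0}$ and $f^{(1)}_i$, $f^{(2)}_{ij}$ finite combinations of the $f_{\underline n}$, $|\underline n|\le2$. On $\{|c|<\delta'\}$ one has $|(c^2)^{\underline k}|<\delta'^{\,2|\underline k|}$, so each such series is dominated by $\sum_{\underline k}(\delta'/\delta)^{2|\underline k|}$, convergent because $\delta'<\delta$, costing one further factor $\le(\delta/(\delta-\delta'))^{d}$. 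Since $\delta\le1$, this yields $|f^{(j)}|_{\delta'}\le C(\delta-\delta')^{-2d}|f|_\delta$ for $j=0,1,2$. (It is here that the quadratic scaling matters: with a decomposition in $zw-c$ the series $\sum_{\underline k}c^{\underline k}\tilde f_{\underline k}$ would only converge for $|c|<\delta^{2}$.)

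\emph{The remainders $f^{[2]},f^{[3]}$.} These are obtained by subtracting from $f$ the canonical terms of order $<j$ — already estimated — and then dividing by $(zw-c^2)^{\otimes j}$: indeed $f^{[j]}(z,w,c^2)\cdot(zw-c^2)^{\otimes j}=f-(\text{terms of order}<j)\in\cO^{j}(zw-c^2)$. In one degree of freedom this is a literal division, $f^{[2]}=\bigl(f-f^{(0)}-(zw-c^2)f^{(1)}\bigr)/(zw-c^2)^{2}$, which is holomorphic on the full polydisc by irreducibility of $\{zw=c^2\}$, and the bound follows from a single Cauchy integral in the variable $u=zw-c^2$ after the substitution $c^2\mapsto zw-u$, on a circle $|u_i|=\rho$ with $\rho$ a small multiple of $\delta$ — legitimate because the required radius inequalities (of the shape $c_d\,\delta'^{\,2}<\delta$) hold for all $\delta'<\delta$ once $\delta$ is small. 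In general degree one performs the analogous division, again either through a Cauchy integral in $u=zw-c^2$ or by resumming the representation $f^{[j]}_{i_1\cdots i_j}=\sum_{\underline n\ge\lambda_{i_1}+\cdots+\lambda_{i_j}}(zw-c^2)^{\underline n-\lambda_{i_1}-\cdots-\lambda_{i_j}}f_{\underline n}$ against the bound on $\tilde f_{\underline k}$; this costs a third factor $\le(\delta-\delta')^{-d}$, giving the claimed $(\delta-\delta')^{-3d}$.

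\emph{The main obstacle.} The only genuine difficulty is the bookkeeping of the nested radii: one must verify that each of the (at most) three resummations — extraction of the non-resonant coefficients, summation of the $(c^2)$-powers, and the division by $(zw-c^2)^{\otimes j}$ — converges uniformly on all of $\{|z|,|w|,|c|<\delta'\}$ rather than on a smaller polydisc. This rests on the decomposition being taken in $zw-c^2$ (so $c$ enters quadratically and the $(c^2)$-series has ratio $(\delta'/\delta)^2<1$ on $\{|c|<\delta'\}$) together with $\delta$ being small, which makes the numerical constants ($2$, $3$, binomial sums) that appear when estimating these series term by term harmless; working out the multi-dimensional division that produces $f^{[j]}$ is the most delicate of the routine computations.
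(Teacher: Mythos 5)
There is a genuine gap, and it is one you yourself flag in passing: your argument does not prove the lemma as stated, but a modified statement. The lemma concerns the canonical decomposition (\ref{candec1})--(\ref{candec2}) in powers of $zw-c$, and the norm $\|f^{(j)}\|_{\delta',s}$ is a supremum over $c\in\bD^d_{\delta'}$ for \emph{every} $\delta'<\delta$, with no smallness assumption on $\delta$. Your proof instead assumes that ``the canonical decomposition is taken in the variable $c^2$'' and that $\delta$ is small, and you use both ``in an essential way''. Neither is part of the statement, nor of the way the lemma is used: in Section \ref{sec6} the scheme is run in the plain variable $c$ (see the term $\<\omega,zw-c\>$ in Theorem \ref{theo:5.1}, the formulas (\ref{choiceg'})--(\ref{choicek}), and the estimate (\ref{eq106}) feeding into Lemma \ref{sanslambda:elliptic}); the statements involving $c^2$ are only the ``Furthermore'' clauses obtained afterwards by substituting $c\mapsto c^2$. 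Your own parenthetical remark concedes the problem: with the decomposition actually defined in (\ref{A.exp})/(\ref{9candec}), the Cauchy bound $|\tilde f_{\underline k}|_{\delta'}\lesssim\delta^{-2|\underline k|}|f|_\delta$ only gives convergence of $f_{\underline n}=\sum_{\underline k\ge\underline n}\binom{\underline k}{\underline n}c^{\underline k-\underline n}\tilde f_{\underline k}$ for $|c|<\delta^2$, not on all of $\bD^d_{\delta'}$. So the chain of resummations you set up stops short of the claimed inequality; replacing $c$ by $c^2$ changes what is being estimated rather than closing the gap, and ``$\delta$ small'' does not help either, since the ratio $\delta'/\delta^2$ can exceed $1$ for $\delta'<\delta$ no matter how small $\delta$ is.

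For comparison, the paper's proof is a direct two-step summation with no quadratic substitution and no smallness of $\delta$: it first asserts the bound $\|\tilde f_{\underline k}\|_{\delta',s}\le Ce^{-2\pi\rho|\underline k|}(\delta-\delta')^{-2d}\|f\|_{\delta,s}$, i.e. a decay in $|\underline k|$ strong enough to dominate the binomial factors, and then sums (\ref{9candec}) term by term to control $\|f_{\underline n}\|_{\delta',s}$ (hence $f^{(j)}$ and the tails $f^{[j]}$, which are resummations $\sum_{\underline n\ge\cdots}(zw-c)^{\underline n-\cdots}f_{\underline n}$, costing the remaining factor of $(\delta-\delta')^{-d}$). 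So the crux of the lemma is precisely the decay estimate on $\|\tilde f_{\underline k}\|_{\delta',s}$ that makes the $c$-resummation converge on the full polydisk $\bD^d_{\delta'}$; that is the step your proposal replaces by a change of the statement rather than proves. (Your reduction to $s=0$, the extraction of $\tilde f_{\underline k}$ by Cauchy estimates over the non-resonant index set, and the treatment of $f^{[2]},f^{[3]}$ as tail resummations rather than genuine divisions are all in the spirit of the paper; the missing ingredient is the justification of summability in $c$ on $\bD^d_{\delta'}$ for the decomposition in $zw-c$.)
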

\begin{proof}From (\ref{9candec}) and the fact that ($\delta'<\delta$)
$$\|\ti f_{\underline{k}}\|_{\delta',s}\leq Ce^{-2\pi\rho |\underline{k}|}\frac{1}{(\delta-\delta')^{2d}}\|f\|_{\delta,s}$$
we get 
$$\|f_{\underline{n}}\|_{\delta',s}\leq e^{-4\pi |\underline{n}|\delta} \biggl(\sum_{k\geq 0}\frac{k^n}{n!}e^{-2\pi(\delta-\delta')k}\biggr)^d\frac{1}{(\delta-\delta')^{2d}}\|f\|_{\delta,s}$$
\end{proof}

We define 
$$[f]_{\delta,s}=\max(\|f^{(0)}\|_{\delta,s},\|f^{(1)}\|_{\delta,s},\|\cD\cL f^{(0)}\|_{\delta,s},\|\cD\cL f^{(1)}\|_{\delta,s}).$$
and 
$$M_{f}=\cM(f^{(1)}-f^{(2)}\cD \cL f^{(0)}).$$
\bn{\bf Remark. }Since we shall need it later, we notice that while $\cL$ does not preserve $\s$-symmetry (if $f$ is $\s$-symmetric, $\sqrt{-1}\cL f$ is $\s$-symmetric) as well as $\cD$ (if  $f$ is $\s$-symmetric, $\sqrt{-1}\cD f$ is $\s$-symmetric)  the composition $\cD \cL$ preserves $\s$-symmetry.

Also, we set

$$\{ f\}_{\delta,s}=\max(\max_{0\leq j\leq 2}\|f^{(j)}  \|_{\delta,s},\max_{j=2,3}\|f^{[j]}\|_{\delta,s},\|f\|_{\delta,s})$$
Notice that from Lemma \ref{lem106},
\be \{f\}_{\delta-h,s}\leq Ch^{-3d}\|f\|_{\delta,s}.\label{eq106}\ee
We denote by ${\EE}^{\w,\infty}_{\delta}$ the set of  
(exact) symplectic  (with respect to $dz\wedge dw$) $\s$-symmetric  diffeomorphisms     
defined on $\bD^{2d}_{{\delta}}\times \bD^{d}_{\delta}\times B$
 $$Z_{c,\o}(z,w)=
\left(\begin{array}{l}
z+R(z,w,c,\w) \\
w+T(z,w,c,\omega)
\end{array}\right)$$
with $R,T \in  \cC^{\w,\infty}(\bD_{{\delta}} \times \bD_{\delta}^d \times \bD_{\delta}^d,B)$ and  $R,T=\cO^2(z,w,c)$. If $Z'$ is another mapping in
${\EE}^{\w,\infty}_{\delta}$  then we define
$${\{Z-Z'\}}_{\de,s}=\max(\{R'-R\}_{\de,s},\{T'-T\}_{\de,s}).$$
and
$$(Z\circ Z')_{c,\omega}(z,w)=Z_{c,\o}( Z'_{c,\o}(z,w)).$$
 We denote by ${\EE}^{\w,\sigma,\infty}_{\delta}$ the set of elements of ${\EE}^{\w,\infty}_{\delta}$ that are $\s$-symmetric.

\subsection{Notations}\label{Sec:6.2}
If $h$ is a  positive number  we denote by $\cC_{s}(h)$ an expression of the form $C_{s}\times ( h)^{-\a(s)}$ where $C_{s}$ is a constant  and $\a(\cdot)$  is an increasing real-valued function defined on $\N$. Also, if  $(\e_{s})_{s}$, $(\zeta_{s})_{s}$ are sequences of positive real numbers indexed by $s\in \N$ we use the short hand notation $\<\e,\zeta\>_{s}$ (resp. $\<\e,\e,\zeta\>_{s}$)
to denote the sum of all possible products $\e_{i}\zeta_{j}$ (resp. $\e_{i}\e_{j}\zeta_{k}$) where $i,j$ (resp. $i,j,k$)   take value in $\{0,s\}$ and the value $s$ is taken at most once.

\subsection{The (local) Normal Form Theorem}
Using the preceding notations and the  change of coordinates $z=x+\sqrt{-1}y$, $w=x-\sqrt{-1}y$ we are reduced to prove the following result:

\begin{Thm}\label{theo:5.1} Let $\kappa<1,\delta<1$.  There exist  constants $C,a>0$  (depending only on $\tau$ and $d$) such that if $H(z,w,c)$ is in $\cC^{\omega,\sigma,\infty}_{\delta}$  ($\s$-symmetric) and independent of $\omega$ and 
  if one  assumes that  for some $0<h<\delta/2$
\be\label{smallness}
[H]_{\delta,0}\leq C\biggl((1+\{H\}_{\delta,0})^{-1}\kappa h\biggr)^a  \ee
then, there exist an exact symplectic $\s$-symmetric change of coordinates $Z(c,\omega,z,w)$  in $\cE^{\omega,\s,\infty}_{\delta-h}$ and functions $g(c,\omega,z,w)$, $\Gamma(c,\omega)$, $\Lambda(c,\omega), H'$ in $C^{\omega,\sigma,\infty}_{\delta-h}$,    such that 

\begin{multline*}(H+\<\omega+\Lambda(c,\omega),\cdot\>)\circ Z=\Gamma(c,\omega)+\<\omega,zw-c\>+H'(z,w,c,\omega)+\\ g(z,w,c,\omega)\end{multline*}
where   $g$ is $(\kappa,\tau)$-flat and $[H']_{\delta-h,0}=0$.
Furthermore, $Z,H',g,\Gamma,\Lambda$  satisfy 
\begin{multline}\label{upperbound}
\max\biggl( \|\Lambda\|_{\delta-h,s},\{Z-id\}_{\delta-h,s},\|g\|_{\delta-h,s},\{H'-H\}_{\delta-h,s} \biggr)\\
\leq C_{s}((1+\{H\}_{\delta,0})(\kappa h)^{-1})^{\alpha(s)}[H]_{\delta,0}
\end{multline}
where $C_{s}$ and $\a(s)$ are constants depending only on $s$, $\tau$ and $d$.

Furthermore, if {$(H-H^{[2]})(z,w,c^2)=\cO^{2q+1}(z,w,c)$, then   $(Z-id)(z,w,c^2,\omega)=\cO^{2q}(z,w,c)$,  $g(z,w,c^2,\omega)=\cO^{2q+1}(z,w,c)$ and $\Lambda(c^2,\omega)=\cO^{2q+1}(c)$.

Also,  if  
$$\o_0\in DC(2{\k},\t)$$
then $\bD^{2d}_{\delta'}\times\bD^d_{\delta'}\times \bD^1_{\delta'}\ni (z,w,c,\lambda)\mapsto \Lambda(c,(1+\lambda) \omega_{0}), Z(z,w,c,(1+\lambda) \omega_{0})$ and  $H' (z,w,c,(1+\lambda)\omega_{0})$  are analytic  for some $0<\de'\leq \de$ and 
$g(z,w,c,(1+\lambda)\omega_{0})=0$ on $\bD^{2d}_{\delta'}\times\bD^d_{\delta'}\times \bD^1_{\delta'}$.}
\end{Thm}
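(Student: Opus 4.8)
The plan is to prove this by a KAM-type Newton iteration, exactly paralleling the scheme of \cite{EFK} but carried out in the $(z,w,c)$ variables with $\s$-symmetry tracked at every step. At the $n$-th step one has a Hamiltonian of the form $H_n + \l{\o + \Lambda_n(c,\o),\cdot\>$ whose error term (the part preventing it from being in normal form) has size $\eps_n$, and one seeks an exact symplectic $\s$-symmetric change of variables $Z_n = \exp(X_{f_n})$ (time-one flow of a Hamiltonian vector field with generating Hamiltonian $f_n$), a correction $\delta\Lambda_n$ to the counter term, and a new diagonal part $H'_{n+1}$, so that after conjugation the error drops to size $\eps_n^{1+\nu}$ for some $\nu>0$, at the cost of shrinking the analyticity radius from $\delta_n$ to $\delta_n - h_n$. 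The linearized (homological) equation at each step is solved by the operator $\cL$ introduced in \S\ref{sec6}: one sets $f_n = \cL(\text{error}^{(0)} + \cdots)$, defines the new counter term correction via $\cM$ applied to the appropriate combination (precisely the quantity $M_f = \cM(f^{(1)} - f^{(2)}\cD\cL f^{(0)})$ appearing in the definitions), and absorbs the $\cP$-part of the error into the $(\k,\t)$-flat remainder $g$. The quadratic convergence then follows from Lemma \ref{mainlemma:elliptic} once the geometric sequence of losses $h_n$ is chosen so that $\sum h_n < h$ and the super-exponential decay of $\eps_n$ beats the $\cC_s(h_n)$ factors; the smallness hypothesis \eqref{smallness} with the correctly chosen exponent $a$ is exactly what makes the first step (hence the whole induction) go through, and the estimate \eqref{upperbound} is obtained by summing the telescoping bounds over the iteration with $\al(s)$ the accumulated exponent.

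First I would set up the functional-analytic framework: fix the spaces $\cC^{\w,\s,\infty}_{\delta}$, $\cE^{\w,\s,\infty}_\delta$ and the norms $\|\cdot\|_{\delta,s}$, $\{\cdot\}_{\delta,s}$, $[\cdot]_{\delta,s}$; record that $\cP$, $\cM$ commute, that $\cD\cL$ preserves $\s$-symmetry (the Remark after the definition of $M_f$), and that composition and inversion of maps in $\cE^{\w,\s,\infty}$ stay in the class with controlled norms. Then I would prove a single-step lemma: given $H$ in the class with error of size $\eps$, produce $Z = \exp(X_f)$, $\delta\Lambda$, $H'$ with the new error of size $\le \cC_s(h)\eps^2$ and $\|f\|, \|\delta\Lambda\|, \{Z-\id\} \le \cC_s(h)\eps$, all $\s$-symmetric, using $\cL$ to solve the homological equation and Lemma \ref{mainlemma:elliptic} for the estimates; the canonical decompositions \eqref{candec1}, \eqref{candec2} and Lemma \ref{uniquedecomp} are what let one extract the pieces $f^{(0)}, f^{(1)}, f^{(2)}$ unambiguously and verify that $H'$ can be kept diagonal ($[H']_{\delta-h,0}=0$). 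Next I would iterate, choosing $h_n = h/(C_0 n^2)$ say, and prove convergence of $Z_1\circ Z_2 \circ \cdots$ in $\cE^{\w,\s,\infty}_{\delta - h}$ and of $\sum\delta\Lambda_n$ in $\cC^{\w,\s,\infty}_{\delta-h}$; the limit gives the stated identity with $g$ the sum of all the $\cP$-parts peeled off, which is $(\k,\t)$-flat because each $\cP f = f$ summand is, and flatness is preserved under the (non-flat but smooth) conjugations since $g$ is added after conjugating.

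For the two supplementary claims at the end: the order-of-vanishing statement — if $(H - H^{[2]})(z,w,c^2) = \cO^{2q+1}(z,w,c)$ then $Z - \id$, $g$, $\Lambda + \cdots$ are $\cO^{2q}$ — follows by checking that all the operators in play ($\cP$, $\cM$, $\cL$, $\cD$, the canonical projections, composition) preserve the ideals $\cO^k(z,w,c)$ and that the homological equation does not lower the order, so the vanishing of the initial error at order $2q+1$ propagates through every iterate; this is a bookkeeping argument on the grading of $\bC[[z,w,c]]$. The analyticity-in-$\lambda$ statement when $\o_0 \in DC(2\k,\t)$ uses that for $\o = (1+\lambda)\o_0$ with $\lambda$ small, $\o$ stays in $DC(\k,\t)$, so the cut-off $\cP$ acts trivially on the relevant frequency combinations and the operator $\cL$ is given by a convergent (rather than merely $C^\infty$-in-$\o$) series — hence $\cL f$, and therefore $f_n$, $Z_n$, $\delta\Lambda_n$ and their sums, depend holomorphically on $\lambda$ on a fixed polydisk $\bD^1_{\delta'}$; and $g \equiv 0$ there because every $\cP$-part peeled off vanishes identically on the Diophantine set.

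\medskip

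\emph{Main obstacle.} The delicate point is the careful choice of the exponent $a$ in the smallness condition \eqref{smallness} together with the factor $(1+\{H\}_{\delta,0})^{-1}$: the operator $\cL$ loses powers of $1/\k$ and $1/h$ (Lemma \ref{mainlemma:elliptic}), and the nonlinear terms in the conjugation — in particular the $f^{(2)}\cD\cL f^{(0)}$ cross-term hidden in $M_f$ and the Taylor remainder of $H\circ\exp(X_f)$ — couple the error to the \emph{size of $H$ itself} (through $\{H\}_{\delta,0}$), not just to the error. One must therefore track two quantities simultaneously (the small error $\eps_n$ and the a-priori-$O(1)$ total size $\{H_n\}_{\delta_n,0}$), show the latter stays bounded along the iteration, and balance $\k$, $h$, $a$ so that the quadratic gain survives. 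Getting these exponents consistent — so that $\al(s) \ge (s-t) + \al(t)$ as demanded in Proposition \ref{counterthm}, and so that the $\s$-symmetry is genuinely preserved at the nonlinear level (it is only $\sqrt{-1}\cL$, $\sqrt{-1}\cD$ that are symmetric, so one must check the combination $\cD\cL$ and the time-one flow land back in the symmetric class) — is the technical heart of the argument; everything else is a faithful transcription of \cite{EFK} with $d$ replaced by $2d$ and real action-angle variables replaced by the $(z,w,c)$ coordinates.
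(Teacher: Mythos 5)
Your main-body plan coincides with the paper's proof: the inductive step is exactly their Lemma \ref{sanslambda:elliptic} (solve the homological equation with $\cL$ via $k^{(0)}=\cL H^{(0)}$, $k^{(1)}=\cL(H^{(1)}-\<\cD k^{(0)},H^{(2)}\>)$, peel the $\cP$-parts into the flat remainder) combined with Lemma \ref{lambda:elliptic} (eliminate $M_H=\cM(H^{(1)}-H^{(2)}\cD\cL H^{(0)})$ by adjusting $\Lambda$ through an implicit-function argument), assembled into the KAM step of Lemma \ref{indLem} and iterated with the same $(\e_{\delta,s},\zeta_{\delta,s})$ bookkeeping borrowed from \cite{EFK}; the order-of-vanishing claim is likewise handled by the same propagation-through-the-formulas check you describe. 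The only cosmetic difference is that you conjugate by time-one flows from the start, whereas the paper does the algebra with a generating function $k$ and only then replaces $Z_k$ by the $\s$-symmetric flow map $\ti Z_k$ of $J\nabla k$ (Lemma \ref{lem71}), paying a quadratic correction controlled by Proposition \ref{PropApp} -- both routes are fine and yours is in fact how the paper ultimately restores $\s$-symmetry.

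Where you genuinely diverge is the final claim (analyticity in $\lambda$ along $\o=(1+\lambda)\o_0$, and $g\equiv 0$ there). The paper gets this almost for free: having proved the theorem without that claim, it applies it once more to the analytic function $(z,w,(c,\lambda))\mapsto H(z,w,c,(1+\lambda)\o_0)$ with $s=0$, i.e.\ it adjoins $\lambda$ to the analytic parameter $c$, so holomorphy in $\lambda$ is inherited from the holomorphy in the $c$-type variables that the construction already delivers, with no inspection of the $\o$-dependence of the scheme. Your route -- note that along this line the off-diagonal cut-off factors vanish and the divisors are $(1+\lambda)\<k,\o_0\>$, hence holomorphic in $\lambda$ and bounded below by $\k|k|^{-\t}$ for $|\lambda|\le \tfrac12$ when $\o_0\in DC(2\k,\t)$ -- can also be made to work, but it needs two checks your sketch glosses over: (i) the cut-off $l$ is only $C^\infty$ and the scheme is built for real $\o$, so to speak of holomorphy on a polydisk $\bD^1_{\delta'}$ you must re-run the iteration for the complex one-parameter family with $\cP$ removed and verify convergence uniformly in complex $\lambda$; and (ii) you must verify that this cut-off-free scheme returns, at the real Diophantine parameters, the same $\Lambda$, $Z$, $H'$ as the original construction (it does, because every operation -- $\cL$, $\cM$, $\cP$, composition, the implicit-function step for $\Lambda$ -- acts fiberwise in $\o$ and the $\cP$-parts vanish on the line), since the theorem asserts analyticity of those very objects. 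The paper's device buys exactly this without any such verification; your route has the small advantage of making transparent why $g$ vanishes identically there, namely that every peeled-off $\cP$-part is zero on the Diophantine line.
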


\subsection{Proof of the local Normal  Form Theorem}\label{inductive:elliptic}
The proof of Theorem \ref{theo:5.1} is based on the inductive step described in Lemma \ref{indLem}. This lemma is proved in two steps. In a first time we treat the case where 
$M_{H}:=\cM(H^{(1)}-H^{(2)}\cD \cL H^{(0)})$
 is equal to zero and in a second step we show how to reduce to this case by adding a counter term $\<\Lambda(c,\omega),\cdot\>$.

\subsubsection{The case when $M_{H}=0$.}

In the next Lemma we will prove that if $M_H=0$, then one can apply a conjugacy to $H$ to reduce its affine part to a quadratically small one. 

\begin{Lem} \label{sanslambda:elliptic}Let  $H(z,w,c,\omega)\in \cC^{\omega,\s,\infty}_{\delta}$ (hence $\s$-symmetric) and  denote \be \e_{\delta,s}=[H]_{\delta,s},\quad \zeta_{\delta,s}=\{H\}_{\delta,s}+1.\ee
 If $M_{H}=0$ and if  
 \be \e_{\delta,1}\leq \cC_{1}(\kappa h) \zeta_{\delta,1}^{-1}\label{small}\ee
 then
there exist
$Z\in \EE^{\w,\s,\i}_{\de-h}$ ($\s$-symmetric), $\ti \Gamma,\ti H \in \cC^{\w,\sigma,\i}_{\de-h}$ and a $(\k,\t)$-flat function, $\s$-symmetric
$\ti g$ such that 
\begin{equation*}(H+ \langle \w, \cdot \rangle)  \circ Z_{c,\o} (z,w) = \ti\Gamma(c,\omega)+
\langle \w, zw-c\rangle + \ti H(z,w,c,\w)+ \ti g(z,w,c,\w),\end{equation*}
and $$[\ti H]_{\delta-h,s}\leq \cC_{s}(\kappa h)\zeta_{\delta,0}\<\e,\e,\zeta\>_{\delta,s}$$
$$\max\biggl(\|\ti g\|_{\delta-h,s}, \{{\ti H}^{[2]}-H^{[2]}\}_{\delta-h,s}, \{Z-id\}_{\delta-h,s}\biggr)\leq  \cC_{s}(\kappa h) )\zeta_{\delta,0}\<\e,\zeta\>_{\delta,s}$$
{Furthermore, if $(H-H^{[2]})(z,w,c^2,\o)=\cO^{2q+1}(z,w,c)$, then $({\ti H}-{\ti H}^{[2]})(z,w,c^2,\o)=O^{2q+1}(z,w,c)$,  $( Z-id)(z,w,c^2,\omega)=\cO^{2q}(z,w,c)$ and $\ti g(z,w,c^2,\omega)=\cO^{2q+1}(z,w,c)$.}
\end{Lem}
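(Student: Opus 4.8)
The plan is to follow the classical KAM iterative scheme adapted to the $\sigma$-symmetric, parameter-dependent setting introduced above, the main point being to remove the affine part $H^{(1)}$ of $H$ (as measured by $[H]_{\delta,s}$) by an exact symplectic change of variables, while keeping track of the $\sigma$-symmetry and of the divisibility properties by $(zw-c)$.

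\textbf{Step 1: Choice of the generating function.} I would look for $Z=Z_{c,\omega}$ as the time-one map (or a map defined via a generating function) associated to a Hamiltonian $S(z,w,c,\omega)$ of the form $S=\cL(\text{something built from }H^{(0)},H^{(1)})$, so that conjugating $H+\langle\omega,\cdot\rangle$ by $Z$ replaces $H^{(0)}$ and $H^{(1)}$ by quadratically small (in $[H]_{\delta,s}$) terms. Concretely, writing the canonical decomposition $H=H^{(0)}+\langle H^{(1)},zw-c\rangle+H^{[2]}\cdot(zw-c)^{\otimes2}$ and computing $(H+\langle\omega,\cdot\rangle)\circ Z$ to first order, the homological equation that appears is $\cD^\omega S = -H^{(0)} - \langle H^{(1)}-H^{(2)}\cD\cL H^{(0)}, zw-c\rangle + (\text{diagonal/flat corrections})$, which is solvable precisely because $\cM$ of the right-hand side is killed by the hypothesis $M_H=0$ together with $\cP$ absorbing the non-Diophantine part into the flat error $\tilde g$. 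The operator $\cL$ from Section~\ref{sec6} provides $S$, and the $\sqrt{-1}\cL$ preserves $\sigma$-symmetry (and $\cD\cL$ does too), so $S$ can be chosen $\sigma$-symmetric and $Z\in\EE^{\w,\sigma,\i}_{\delta-h}$.

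\textbf{Step 2: Estimates.} Using Lemma~\ref{mainlemma:elliptic} to bound $\cP f$ and $\cL f$, Lemma~\ref{lem106} and the inequality \eqref{eq106} to control the canonical components $f^{(j)},f^{[j]}$ of the composed Hamiltonian, and the standard Cauchy estimates for the loss of domain $h$, I would show that $Z-\id$ and $\tilde g$ are of size $\cC_s(\kappa h)\zeta_{\delta,0}\langle\epsilon,\zeta\rangle_{\delta,s}$ and that the new affine part is of size $\cC_s(\kappa h)\zeta_{\delta,0}\langle\epsilon,\epsilon,\zeta\rangle_{\delta,s}$, i.e. quadratic in $\epsilon=[H]_{\delta,\cdot}$. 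The smallness hypothesis \eqref{small}, $\epsilon_{\delta,1}\le\cC_1(\kappa h)\zeta_{\delta,1}^{-1}$, is exactly what is needed so that the composition is well-defined on $\bD^{2d}_{\delta-h}$ (the map $Z$ maps this smaller polydisk into the original domain) and so that the quadratic terms can be written in the stated form. The bookkeeping of the $\langle\epsilon,\zeta\rangle_s$ and $\langle\epsilon,\epsilon,\zeta\rangle_s$ notation is routine but must be done carefully to match the interpolation structure needed later for convergence of the full iteration in Theorem~\ref{theo:5.1}.

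\textbf{Step 3: Divisibility in $(z,w,c)$ and $\sigma$-symmetry.} For the last sentence: if $(H-H^{[2]})(z,w,c^2,\omega)=\cO^{2q+1}(z,w,c)$, then $H^{(0)}=\cO^{2q+1}$ and $H^{(1)}=\cO^{2q-1}$ in $(z,w,c)$ (as already observed in the discussion after Proposition~\ref{counterthm}); since $\cL$ and $\cD$ do not decrease the order in $(z,w,c)$, the generating function $S$ (hence $Z-\id$) is $\cO^{2q}(z,w,c)$, the correction to the affine part keeps order $\cO^{2q+1}$, and the flat remainder $\tilde g$ inherits $\cO^{2q+1}(z,w,c)$. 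The $\sigma$-symmetry of everything follows from the $\sigma$-symmetry of $H$ together with the fact that $\cM$, $\sqrt{-1}\cD$, $\sqrt{-1}\cD^\omega$ and $\cD\cL$ preserve $\sigma$-symmetry (items \ref{item'4} and \ref{item4} of Lemma~\ref{cohom} and the Remark in Section~\ref{sec6}). The main obstacle I anticipate is not any single conceptual point but the precise verification that, after composition, the new Hamiltonian's canonical decomposition has its affine part genuinely quadratically small with the claimed dependence on $\zeta_{\delta,0}$ — this requires expanding $(H+\langle\omega,\cdot\rangle)\circ Z$ to second order, identifying which terms the homological equation removes, and showing the leftover is a sum of products each containing two factors bounded by $\epsilon$; the role of $M_H=0$ is precisely to guarantee that the obstruction (the $\cM$-part) to solving the homological equation vanishes, so that no counter term is needed at this stage.
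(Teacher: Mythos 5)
Your overall scheme is the same as the paper's: decompose $H$ canonically, solve the homological equations $k^{(0)}=\cL(H^{(0)})$, $k^{(1)}=\cL\bigl(H^{(1)}-\langle\cD k^{(0)},H^{(2)}\rangle\bigr)$, absorb the $\cP$-parts into the flat error and the $\cM$-part of $H^{(0)}$ into $\tilde\Gamma$, use $M_H=0$ to kill the obstruction for the affine component, and then check via Lemmas \ref{mainlemma:elliptic}, \ref{lem106} and the composition estimates that the new affine part is quadratic in $\epsilon$ while $Z-\id$, $\tilde g$ and the change in $H^{[2]}$ are of size $\langle\epsilon,\zeta\rangle$. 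The divisibility argument in your Step 3 is also the paper's.

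There is, however, a genuine gap at the one non-routine point of this lemma: the $\sigma$-symmetry of $Z$. You hedge between \lq\lq time-one map or a map defined via a generating function\rq\rq\ and then assert that since $S$ can be chosen ($\sqrt{-1}$-)$\sigma$-symmetric, $Z\in\EE^{\w,\s,\i}_{\de-h}$. This does not follow for the generating-function construction: as the paper stresses (Section \ref{s23} and Appendix B), the correspondence $f\leftrightarrow Z_f$ preserves reality but \emph{not} $\sigma$-symmetry, so the exact symplectic map $Z_k$ produced by your homological equations need not be $\sigma$-symmetric even though $\sqrt{-1}k$ is. The paper's proof needs an extra step that your sketch omits: after doing all the algebra with the generating function $k$ (which is what makes the identity $(z+\pa_{w'}k)w'=z(w'+\pa_z k)-\cD k$ and hence the homological structure exact), it replaces $Z_k$ by $\ti Z_k$, the time-$1$ map of the Hamiltonian flow of $k$, which \emph{is} $\sigma$-symmetric by Lemma \ref{lem71}, and then invokes Proposition \ref{PropApp} (requiring precisely the smallness \eqref{small}) to show $Z_k^{-1}\circ\ti Z_k-\mathrm{id}$ is quadratic in $\|k\|$, so all the stated estimates survive the substitution. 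Alternatively you could commit to the Lie method from the start, which restores $\sigma$-symmetry automatically, but then the first-order computation you describe must be redone as a Lie-series expansion with its own quadratic remainder control; as written, your argument borrows the clean generating-function algebra while claiming the symmetry benefit of the flow map, and that combination is exactly what needs the Appendix B comparison to be justified.
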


\begin{proof}
Assumme $Z_{k}:(z,w)\mapsto (z',w')$ is an exact symplectic change of variable with generating function $k(z,w',c,\omega)$ depending analytically on $z,w,c$ and smoothly on $\omega$: $z'=z+\pa_{w'}k$, $w=w'+\pa_{z}k$ and denote by $H'$ the hamiltonian defined by $H'(z',w')=H(z,w)$.

With  the  notations of Subsection \ref{subsec:2.2}
\begin{multline*}H(z,w,c,\omega)=H^{(0)}(z,w,c,\omega)+\<zw-c,H^{(1)}(z,w,c,\omega)\>+\\ \phantom{\frac{1}{2}} \<zw-c,H^{[2]}(z,w,c,\omega)(zw-c)\>\end{multline*}
\begin{multline*}H'(z,w,c,\omega)={H'}^{(0)}(z,w,c,\omega)+\<zw-c,{H'}^{(1)}(z,w,c,\omega)\>+\\ \phantom{ \frac{1}{2}}\<zw-c,{H'}^{[2]}(z,w,c,\omega)(zw-c)\>\end{multline*}
with $H^{(0)},H^{(1)}, {H'}^{(0)}, {H'}^{(1)}$ in $\widehat{\cN\cR}$.
The equality $H'(z',w')=H(z,w)$ is equivalent to the fact that 
\begin{multline*}(I):=\<\omega,(z+\pa_{w'}k)w'\>+H^{(0)}(z+\pa_{w'}k,w',c,\omega)+\\ \<(z+\pa_{w'}k)w'-c,H^{(1)} (z+\pa_{w'}k,w',c,\omega) \>+\\ \phantom{\frac{1}{2}}\<(z+\pa_{w'}k)w'-c,H^{[2]}(z+\pa_{w'}k,w',c,\omega)((z+\pa_{w'}k)w'-c)\>
\end{multline*}
is equal to 
\begin{multline*}(II):=\<\omega,z(w'+\pa_{z}k)-c\>+ \Gamma'(c,\omega)+{H'}^{(0)}(z,w'+\pa_{z}k,c,\omega)+\\ \<z(w'+\pa_{z}k)-c,{H'}^{(1)} (z,w'+\pa_{z}k,c,\omega) \>+\\ \phantom{\frac{1}{2}}\<z(w'+\pa_{z}k)-c,{H'}^{[2]}(z,w'+\pa_{z}k,c,\omega)(z(w'+\pa_{z}k)-c)\>+g'(z,w'+\pa_{z}k,c,\omega)
\end{multline*}

Since  $(z+\pa_{w'}k)w'=z(w'+\pa_{z}k)-\cD k$ we can write 
\begin{multline*}0=(I)-(II)=-\cD^\omega k+\<\omega,c\>-\Gamma'(c,\omega)+H^{(0)}(z+\pa_{w'}k,w',c,\omega)+\\ \<(z+\pa_{w'}k)w'-c,H^{(1)} (z+\pa_{w'}k,w',c,\omega) \>+\\ \phantom{\frac{1}{2}}\<z(w'+\pa_{z}k)-c-\cD k,H^{[2]}(z+\pa_{w'}k,w',c,\omega)(z(w'+\pa_{z}k)-c-\cD k)\>\\ -{H'}^{(0)}(z,w'+\pa_{z}k,c,\omega)-\\ \<z(w'+\pa_{z}k)-c,{H'}^{(1)} (z,w'+\pa_{z}k,c,\omega) \>-\\ \phantom{ \frac{1}{2}}\<z(w'+\pa_{z}k)-c,{H'}^{[2]}(z,w'+\pa_{z}k,c,\omega)(z(w'+\pa_{z}k)-c)\>-g'(z,w'+\pa_{z}k,c,\omega)
\end{multline*}
and then,
\begin{multline*}0=(I)-(II)=-\cD^\omega k+\<\omega,c\>-\Gamma'(c,\omega)+H^{(0)}(z,w',c,\omega)+\\ \<zw'-c,H^{(1)} (z,w',c,\omega) \> -2\<\cD k,H^{[2]}(z,w',c,\omega)(zw'-c)\>\\ -{H'}^{(0)}(z,w'+\pa_{z}k,c,\omega)- \<z(w'+\pa_{z}k)-c,{H'}^{(1)} (z,w'+\pa_{z}k,c,\omega) \>-\\ \phantom{\frac{1}{2}}\<z(w'+\pa_{z}k)-c,({H'}^{[2]}(z,w'+\pa_{z}k,c,\omega)-H^{[2]}(z+\pa_{w'}k,w',c,\omega))(z(w'+\pa_{z}k)-c)\> \\ - g'(z,w',c,\omega)+\cQ
\end{multline*}
where $\cQ$ is quadratic expression  in $(H^{(0)},H^{(1)},g',k)$ and their first derivatives and depending on $H^{[2]}$; more precisely 
\begin{multline*}\|\cQ\|_{\delta-h,s}\leq \frac{C_{s}}{h^{3d}}\biggl((\|H^{(0)}\|_{\delta,s}+\|H^{(1)}\|_{\delta,s}+\|H^{[2]}\|_{\delta,s}\|k\|_{\delta,0})\|k\|_{\delta,0}+\\ (\|H^{(0)}\|_{\delta,0}+\|H^{(1)}\|_{\delta,0}+\|H^{[2]}\|_{\delta,0}\|k\|_{\delta,0})\|k\|_{\delta,s}\biggr)
\end{multline*}
Finally,
\begin{multline*}0=(I)-(II)=-\cD^\omega k+\<\omega,c\>-\Gamma'(c,\omega)+H^{(0)}(z,w',c,\omega)+\\ \<zw'-c,H^{(1)} (z,w',c,\omega) \> -2\<\cD k,H^{(2)}(z,w',c,\omega)(zw'-c)\>\\ -{H'}^{(0)}(z,w'+\pa_{z}k,c,\omega)- \<z(w'+\pa_{z}k)-c,{H'}^{(1)} (z,w'+\pa_{z}k,c,\omega) \>-\\ \phantom{\frac{1}{2}}\<z(w'+\pa_{z}k)-c,({H'}^{[2]}(z,w'+\pa_{z}k,c,\omega)-H^{[2]}(z+\pa_{w'}k,w',c,\omega))(z(w'+\pa_{z}k)-c)\> \\+\phantom{\frac{1}{3}}2\<\cD k,H^{[3]}(z,w',c,\omega)(zw'-c)^{\otimes 2}\> - g'(z,w',c,\omega)+\cQ
\end{multline*}

Let us now {\it define} $\Gamma'(c,\omega)$, $k(z,w',c,\omega)=k^{(0)}(z,w',c,\omega)+\<(zw'-c),k^{(1)}(z,w',c,\omega)\>$ and  $g'(z,w',c,\omega)={g'}^{(0)}(z,w',c,\omega)+\<zw'-c,{g'}^{(1)}(z,w',c,\omega)\>$ according to
\be \begin{cases}\Gamma'(c,\omega)&=\<c,\omega\>+\cM H^{(0)}\\
{g'}^{(0)}&=\PP (H^{(0)})\\
{g'}^{(1)}&=\PP(H^{(1)}-\<\cD k^{(0)},H^{(2)}\>)
\end{cases} \label{choiceg'}\ee
\be \begin{cases}
k^{(0)}&=\cL(H^{(0)})\\
k^{(1)}&=\cL(H^{(1)}-\<\cD k^{(0)},H^{(2)}\>)
\end{cases}\label{choicek}\ee
and $Z$, $H'$ by $H'(z',w')=H(z,w)$ and $Z(z,w,c,\omega)=(z',w')$ if and only if  $z'=z+\pa_{w'}k(z,w',c,\omega)$ and $w=w'+\pa_{z}k(z,w',c,\omega)$: 
observe that 
\be \max(  \|k\|_{\delta-h,s},\|g'\|_{\delta-h,s}) \leq \cC_{s}(\kappa h)\<\e,\zeta\>_{\delta,s} \label{6.41}\ee
 and hence, if the latter quantity is small enough (see the comment preceding equation (\ref{eqcom}), by the Inverse Function Theorem (see Proposition 10.3 of \cite{EFK}),  the change of variables $(z,w)\mapsto (z',w')$ and its inverse are well defined. 
Since by assumption $M_{H}:=\cM(H^{(1)}-\<\cD k^{(0)},H^{(2)}\>)=0$ we have 
$$\begin{cases}
\cD^{\omega}k^{(0)}(c,\omega,z,w')&=H^{(0)}(z,w'c,\omega)-(\<\omega,c^2\>-\Gamma'(c,\omega))\\
&-{g'}^{(0)}(z,w',c,\omega)\\
\cD^{\omega}k^{(1)}(c,\omega,z,w')&=H^{(1)}(z,w',c,\omega)-\<\cD k^{(0)},H^{(2)}(z,w',c,\omega)\>\\
&-{g'}^{(1)}(z,w',c,\omega)
\end{cases}$$
We then have 
\begin{multline*}{H'}^{(0)}(z,w'+\pa_{z}k,c,\omega)+ \<z(w'+\pa_{z}k)-c,{H'}^{(1)} (z,w'+\pa_{z}k,c,\omega) \>+\\ \phantom{\frac{1}{2}}\<z(w'+\pa_{z}k)-c,({H'}^{[2]}(z,w'+\pa_{z}k,c,\omega)-H^{[2]}(z+\pa_{w'}k,w',c,\omega))(z(w'+\pa_{z}k)-c)\> \\-\phantom{\frac{1}{3}}2\<\cD k,H^{[3]}(z,w',c,\omega)(zw'-c)^{\otimes 2}\>\\ +2\<\cD k^{(1)}(z,w',c,\omega)(zw'-c),H^{(2)}(z,w',c,\omega)(zw'-c)\>\\ =\cQ
\end{multline*}
If we define 
\begin{multline*}\ti \cQ:={H'}^{(0)}(z,w'+\pa_{z}k,c,\omega)+ \<z(w'+\pa_{z}k)-c,{H'}^{(1)} (z,w'+\pa_{z}k,c,\omega) \>+\\ \phantom{ \frac{1}{2}}\<z(w'+\pa_{z}k)-c,({H'}^{[2]}(z,w'+\pa_{z}k,c,\omega)-H^{[2]}(z+\pa_{w'}k,w',c,\omega))(z(w'+\pa_{z}k)-c)\> \\-\phantom{\frac{1}{3}}2\<\cD k(z,w'+\pa_{z}k,c,\omega),H^{[3]}(z,w'+\pa_{z}k,c,\omega)(z(w'+\pa_{z}k)-c)^{\otimes 2}\>\\ -2\<\cD k^{(1)}(z,w'+\pa_{z}k,c,\omega)(z(w'+\pa_{z}k)-c),H^{(2)}(z,w'+\pa_{z}k,c,\omega)(z(w'+\pa_{z}k)-c)\>
\end{multline*}
we see that  $\ti \cQ$ is still quadratic in $H^{(0)}$, $H^{(1)}$, $g'$,  $k$ and their first derivatives:
\begin{multline}\|\ti \cQ\|_{\delta-h,s}\leq \frac{C_{s}}{h^{3d}}\biggl(\biggl( \|H^{(0)}\|_{\delta,s}+\|H^{(1)}\|_{\delta,s}+\|H^{[2]}\|_{\delta,s}\|k\|_{\delta,0}\\ +\|H^{[3]}\|_{\delta,s} \|k\|_{\delta,0}\biggr) \|k\|_{\delta,0}+\\ \biggl(\|H^{(0)}\|_{\delta,0}+\|H^{(1)}\|_{\delta,0}+\|H^{[2]}\|_{\delta,0}\|k\|_{\delta,0}+\|H^{[3]}\|_{\delta,0}\|k\|_{\delta,0}\biggr) \|k\|_{\delta,s}\biggr)\label{Qtilde}
\end{multline}
and thus
$$\|\ti \cQ\|_{\delta-h,s}\leq \cC_{s}(\kappa h))\zeta_{\delta,0}\<\e,\e,\zeta\>_{\delta,s}.$$
Coming back to the variables $(z,w)$ and setting  $\cQ'(z,w)=\ti\cQ(z,w')$ we see that 
\begin{multline*} {H'}^{(0)}(z,w,c,\omega)+ \<zw-c,{H'}^{(1)} (z,w,c,\omega) \>+\\ \phantom{\frac{1}{2}}\<zw-c,({H'}^{[2]}(z,w,c,\omega)-H^{[2]}(z+\pa_{w'}k,w',c,\omega))(zw-c)\> \\-\phantom{\frac{1}{3}}2\<\cD k(z,w,c,\omega),H^{[3]}(z,w,c,\omega)(zw-c)^{\otimes 2}\>\\ -\<\cD k^{(1)}(z,w,c,\omega)(zw-c),H^{(2)}(z,w,c,\omega)(zw-c)\>=
\cQ'
\end{multline*}
where $\cQ'$ is still quadratic in the following sense: 
from Proposition 10.3 of \cite{EFK} (estimates on composition with the inverse map of the change of variables), (\ref{Qtilde}), (\ref{choicek}) and Lemmas \ref{mainlemma:elliptic} and \ref{lem106} we get, provided $\cC_{1}(\kappa h)\<\e,\zeta\>_{\delta,1}\leq 1$ (which is the case if $\e_{\delta,1}\leq \cC_1(\kappa h) \zeta_{\delta,1}^{-1}$) 
\be  \|\cQ'\|_{\delta-h,s}\leq \cC_{s}(\kappa h)\zeta_{\delta,0}\<\e,\e,\zeta\>_{\delta,s} \label{eqcom}\ee
By Lemma \ref{uniquedecomp},  ${H'}^{(0)}$ and ${H'}^{(1)}$ are uniquely determined by $\cQ'$ since they are in $\widehat{\cN\cR}$ and hence are quadratically small: by Lemma \ref{lem106}
\be [H']_{\delta-h,s}\leq \cC_{s}(\kappa h)\zeta_{\delta,0} \<\e,\e,\zeta\>_{\delta,s}\label{estimee1}\ee
Then, ${H'}^{[2]}-H^{[2]}$ is of the order of (the derivative of) $k$:
\be\{H'^{[2]}-H^{[2]}\}_{\delta-h,s}\leq  \cC_{s}(\kappa h) )\zeta_{\delta,0}\<\e,\zeta\>_{\delta,s}\label{estimee2}\ee

{Finally, in the case $(H-H^{[2]})(z,w,c^2,\omega)=\cO^{2q+1}(z,w,c)$,   formulas (\ref{choiceg'}) and  (\ref{choicek}) show that $g'(z,w,c^2,\omega)$ and $k(z,w,c^2)$  are $\cO^{2q+1}(z,w,c)$. Hence also $(Z-id)(z,w,c^2,\omega)=\cO^{2q}(z,w,c)$.}

We have so far proved that  with the choices (\ref{choiceg'}) and (\ref{choicek})
\begin{equation*}(H+ \langle \w, \cdot \rangle)  \circ Z_{k} (z,w) = \Gamma'(c,\omega)+
\langle \w, zw-c\rangle + H'(z,w,c,\w)+ g'(z,w,c,\w),\end{equation*}
where $H'$ satisfies the estimates (\ref{estimee1}) and (\ref{estimee2}).

We are not completely finished with the proof of our Lemma since nothing insures us that the change of variables $Z_{k}$  we have performed is $\s$-symmetric. Let us introduce $\ti Z_{k}$ the time 1-map of the hamiltonian vector field $\sqrt{-1}J\nabla k$. The equations (\ref{choicek}) show that $\sqrt{-1}k$ is $\s$-symmetric (see the remark following Lemma \ref{lem106}) and thus $\ti Z_{k}$ is $\s$-symmetric (see Lemma \ref{lem71}). The assumption (\ref{small}) allows to apply  Proposition \ref{PropApp}: we have $\{Z_{k}^{-1}\circ \ti Z_{k}-I\}_{\delta-h,s}=\cC_{s}(h)\<\|k\|,\|k\|\>_{\delta,s}$ and thus we can write 
\be (H+ \langle \w, \cdot \rangle)  \circ \ti Z_{k}=  \ti \Gamma(c,\omega)+
\langle \w, zw-c\rangle + \ti H(z,w,c,\w)+ g'(z,w,c,\w).
\ee
where the  estimates on composition of Proposition 10.2 of \cite{EFK} and estimates (\ref{6.41}) show that 
\be [\ti H]_{\delta-h,s}\leq \cC_{s}(\kappa h)\zeta_{\delta,0} \<\e,\e,\zeta\>_{\delta,s}\label{estimee1ti}\ee
\be\{\ti H^{[2]}-H^{[2]}\}_{\delta-h,s}\leq  \cC_{s}(\kappa h) )\zeta_{\delta,0}\<\e,\zeta\>_{\delta,s}.\label{estimee2ti}\ee
Equations (\ref{choiceg'}), show that $g'$ is $\s$-symmetric.

\end{proof}
\subsubsection{ Elimination of the mean value $M_{H}$}
Here is a lemma similar to Lemma 8.4 of \cite{EFK}, that allows to eliminate $M_H$ by adding a term $\langle \Lambda, \cdot \rangle$.

\begin{Lem} \label{lambda:elliptic}Let $W\in \cE_{\delta}^{\omega,\s,\infty}$ and denote $$\eta_s=\{W-\id\}_{\delta,s}.$$ There exists a constant $\cC_{0}$ such that if $\eta_{\delta,0}\leq \cC_{0}\zeta_{\delta,0}^{-1}$
then there exists $\Lambda \in \CC^{\w,\i}_{\de}$, $\Lambda=\Lambda(c,\omega)$  such that
$$ \tilde H_{\Lambda}=H+\langle \Lambda, \cdot \rangle \circ W$$
verifies $ M_{\tilde H} = 0$ and 
such that  for all $s\in\N$, $0<h<\delta$
\begin{equation*} 
\aa{\Lambda}_{\delta-h,s}\leq  \cC_s(\kappa h)\zeta_{\delta,0}\<\e,\zeta,\eta+\zeta+\e\>_{\delta,s} \end{equation*}
and
\begin{equation*} 
[\tilde H_{\Lambda}-H_{\Lambda}]_{\delta-h,s} \leq \cC_s(\kappa h)\zeta_{\delta,0}\<\e,\zeta,\eta+\zeta+\e\>_{\delta,s}.\end{equation*}
{Furthermore, if $(H-H^{[2]})(z,w,c^2,\omega)=\cO^{2q+1}(z,w,c)$ then $\Lambda(c^2,\omega)=\cO^{2q+1}(c)$ and $(\ti H_{\Lambda}-{\ti H}_{\Lambda}^{[2]})(z,w,c^2,\omega)=\cO^{2q+1}(z,w,c)$.}
\end{Lem}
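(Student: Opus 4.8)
The plan is to reduce the vanishing of $M_{\tilde H_\Lambda}$ to a fixed point problem for the counter term $\Lambda$. First I would spell out the dependence of $M_{\tilde H_\Lambda}$ on $\Lambda$. Write $H_\Lambda = H + \langle \Lambda, \cdot\rangle$; since $zw-c$ has canonical decomposition with zero order-zero part and $\langle\Lambda, zw-c\rangle$ contributes linearly, one has $H_\Lambda^{(0)} = H^{(0)}$ and $H_\Lambda^{(1)} = H^{(1)} + \Lambda$ (the quadratic parts $H_\Lambda^{(2)}$, $H_\Lambda^{[3]}$ being unchanged, because $\Lambda$ depends only on $(c,\omega)$). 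Composition with $W=\id + (W-\id)$, $W-\id \in \OO^2(z,w,c)$, perturbs these in a controlled way: the key point is that $(\tilde H_\Lambda)^{(1)}$ differs from $H_\Lambda^{(1)} = H^{(1)}+\Lambda$ by a term which, after applying $\cM$, depends on $\Lambda$ only through a contraction that is small — of size $\cC_0(\kappa h)\zeta_{\delta,0}(\eta+\zeta)\cdot|\Lambda|$ roughly — using the estimates of Lemma \ref{lem106} and Proposition 10.2 of \cite{EFK} for composition with $W$. Similarly the term $\cM((\tilde H_\Lambda)^{(2)} \cD\cL (\tilde H_\Lambda)^{(0)})$ depends on $\Lambda$ only through the small perturbation coming from $W$, since $(\tilde H_\Lambda)^{(0)}$ does not involve $\Lambda$ at leading order and $\cM\cD\cL H^{(0)}$ has the $\cM$ of the linear-in-$\Lambda$ new term controlled the same way.

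Thus the equation $M_{\tilde H_\Lambda}=0$ reads $\cM(\Lambda) + \Phi(\Lambda) = -M_H$ where $\cM$ acts as the identity on $\Lambda=\Lambda(c,\omega)$ (a function with no $(z,w)$-dependence is already diagonal), and $\Phi$ is affine in $\Lambda$ with $\Phi(0)$ bounded by $\cC_s(\kappa h)\zeta_{\delta,0}\langle\e,\zeta,\eta+\zeta+\e\rangle_{\delta,s}$ and Lipschitz constant in $\Lambda$ bounded by $\cC_0(\kappa h)\zeta_{\delta,0}(\eta_{\delta,0}+\zeta_{\delta,0}+\e_{\delta,0})$, which is $\le 1/2$ once $\eta_{\delta,0}\le \cC_0\zeta_{\delta,0}^{-1}$ and the other smallness hypotheses (already in force from $\e_{\delta,1}$ small via \eqref{small}) hold. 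So I would solve $\Lambda = -M_H - \Phi(\Lambda)$ by the contraction mapping principle in $\CC^{\omega,\infty}_{\delta-h}$; the fixed point $\Lambda=\Lambda(c,\omega)$ is $\sigma$-symmetric because $M_H$ and the whole map are (using items \ref{item'4}, \ref{item4} of Lemma \ref{cohom} and the remark that $\cD\cL$ preserves $\sigma$-symmetry), and it satisfies the claimed bound on $\|\Lambda\|_{\delta-h,s}$. The bound on $[\tilde H_\Lambda - H_\Lambda]_{\delta-h,s}$ then follows from the explicit formula for $(\tilde H_\Lambda)^{(0)}, (\tilde H_\Lambda)^{(1)}$ in terms of composition with $W$ and the estimate just obtained for $\Lambda$, via Lemma \ref{lem106}, Lemma \ref{mainlemma:elliptic} and the composition estimates.

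For the last sentence: if $(H-H^{[2]})(z,w,c^2,\omega)\in\cO^{2q+1}(z,w,c)$, then $H^{(0)}, H^{(1)} \in \cO^{2q+1}(z,w,c)$ and $\cD\cL H^{(0)}\in\cO^{2q+1}$; since $W-\id \in \cO^{2q}(z,w,c)$ (inherited in the inductive scheme) and the constructions are polynomial in these quantities, $M_H \in \cO^{2q+1}(c)$. The fixed point map then preserves the closed subspace of $\Lambda\in\cO^{2q+1}(c)$, so the solution $\Lambda(c^2,\omega)\in\cO^{2q+1}(c)$; feeding this back gives $(\tilde H_\Lambda - \tilde H_\Lambda^{[2]})(z,w,c^2,\omega)\in\cO^{2q+1}(z,w,c)$.

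The main obstacle I expect is bookkeeping: isolating precisely how $(\tilde H_\Lambda)^{(1)}$ and $(\tilde H_\Lambda)^{(2)}\cD\cL(\tilde H_\Lambda)^{(0)}$ depend on $\Lambda$ after composition with $W$, so as to verify both that the $\Lambda$-dependence of $\Phi$ is genuinely a contraction (Lipschitz constant controlled by $\eta_{\delta,0}+\zeta_{\delta,0}+\e_{\delta,0}$ rather than $O(1)$) and that $\Phi(0)$ carries the weight $\langle\e,\zeta,\eta+\zeta+\e\rangle_{\delta,s}$ with the correct combinatorics of the indices $0$ and $s$. This is the same type of quadratic-estimate juggling as in Lemma \ref{sanslambda:elliptic}, now with the extra affine variable $\Lambda$, and it is where \cite{EFK}, Lemma 8.4 would be the template.
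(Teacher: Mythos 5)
Your proposal follows essentially the paper's own route: expand $\<\Lambda,(z+R)(w+T)-c\>$ via the canonical decomposition of $zT+wR+RT$, observe that $\Lambda\mapsto M_{\ti H_\Lambda}$ is a small perturbation of $\Lambda\mapsto M_H+\Lambda$, solve $M_{\ti H_\Lambda}=0$ by a contraction\slash inverse--function argument, and propagate the $\cO^{2q+1}$ statement through $M_H(c^2,\omega)=\cO^{2q+1}(c)$. Two corrections to your bookkeeping, though. First, the map is not affine in $\Lambda$: both $\ti H_\Lambda^{(0)}$ and $\ti H_\Lambda^{(2)}$ acquire terms linear in $\Lambda$ (through the components $U^{(0)}$, $U^{(2)}$ of the decomposition of $zT+wR+RT$), so the product $\ti H_\Lambda^{(2)}\cD\cL\,\ti H_\Lambda^{(0)}$ makes $M_{\ti H_\Lambda}$ of the form $M_H+a_1\cdot\Lambda+a_2\cdot(\Lambda\otimes\Lambda)$, exactly as the paper writes it; the quadratic term is harmless for the fixed point because $a_2$ is small (it carries two factors of size $\eta$), but it must be carried. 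Second, your claimed Lipschitz bound $\cC_0(\kappa h)\zeta_{\delta,0}(\eta_{\delta,0}+\zeta_{\delta,0}+\e_{\delta,0})$ cannot be $\le 1/2$, since $\zeta_{\delta,0}\ge 1$; what actually makes the perturbation-of-identity argument close is that every deviation of the linear coefficient $a_1$ from the identity carries a factor of $\eta$ (each $U^{(j)}$ is of size $\eta$, and the cross terms are $\<\Lambda,U^{(2)}\cdot\>\cD\cL H^{(0)}$ and $H^{(2)}\cD\cL\<\Lambda,U^{(0)}\>$), so the correct bound is of the type $\cC_s(\kappa h)\zeta_{\delta,0}\<\eta+\zeta+\e,\eta\>_{\delta,s}$ -- precisely the estimate the paper records before invoking the Inverse Function Theorem. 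With these two repairs your scheme coincides with the paper's proof.
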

\begin{proof}Let us denote by $W_{c,\omega}:(z,w)\mapsto (z',w')=(z+R(z,w,c,\omega),w+T(z,w,c,\omega))$. Using $\ti H_{\Lambda}=H+\<\Lambda(c,\omega),(z+R)(w+T)-c)\>$, we now compute the canonical decomposition of $\ti H_{\Lambda}$ in terms of the canonical decomposition of $H$  and $\Lambda$. 
We have
\begin{multline*}
\<\Lambda,(z+R)(w+T)-c)\>=\<\Lambda,zw-c\>+\sum_{j=0}^2\phantom{\frac{1}{j!}}\<\Lambda,U^{(j)}(zw-c)^{\otimes j}\>\\ +\phantom{\frac{1}{6}}\<\Lambda,U^{[3]}(zw-c)^{\otimes 3}\>
\end{multline*}
where $\sum_{j=0}^2 U^{(j)}(zw-c)^{\otimes j}+U^{[3]}(zw-c)^{\otimes 3}$ is the canonical decomposition of $zT+wR+RT$. Then, 
for $j=0,2$, ${\ti H}_{\Lambda}^{(j)}=H^{(j)}+\<\Lambda,U^{(j)}\cdot\>$ and $\ti H_{\Lambda}^{(1)}= H^{(1)}+\<\Lambda,(I+U^{(1)})\cdot\>$.
From this it follows that $\Lambda\mapsto M_{\ti H_{\Lambda}}=\cM(\ti H_{\Lambda}^{(1)}-\ti H_{\Lambda}^{(2)}\cD\cL \ti H_{\Lambda}^{(0)})$ is a map of the form $M_{H}+a_{1}\cdot \Lambda+a_{2}\cdot(\Lambda\otimes\Lambda)$ with 
$$\|M_{H}\|_{\delta-h,s}\leq \cC_{s}(\kappa h)\zeta_{\delta,0}\<\e,\zeta\>_{\delta,s}.$$
$$\max (\|a_{1}-I\|_{\delta-h,s},\|a_{2}\|_{\delta-h,s}) \leq \cC_{s}(\kappa h)\zeta_{\delta,0}\<\eta+\zeta+\epsilon,\eta\>_{\delta,s}$$
Now,  the first part  of the lemma follows from (\ref{eq106}), the Inverse Function Theorem and the estimates of Section 10  of \cite{EFK}.  

{Furthermore, since $M_{H}(c^2,\omega)=\cO^{2q+1}(c)$, we see that $\Lambda(c^2,\omega)=\cO^{2q+1}(c)$ and the last statement of the lemma is proven}
\end{proof}

\subsubsection{The inductive step} Putting together Lemmas \ref{sanslambda:elliptic} and  \ref{lambda:elliptic}  we get similarly to Proposition 8.2 of \cite{EFK} the following KAM induction step.
\begin{Lem}\label{indLem}Let  $H,g,\Gamma$ be in $\cC_{\rho,\delta}^{\omega,\s,\infty}$ where $g$ is $(\kappa,\tau)$-flat  and $W$ be in $\cE_{\delta}^{\omega,\s,\infty}$. There are positive  constants $C$ and $a$ such that if $\eta_{0}\leq C\zeta_{0}^{-1}$ and $\e_{1}\leq\biggl( \kappa h \zeta_{\delta,1}^{-1}(1+\eta_{\delta,1})^{-1}\biggr)^{a}$ then there exist $Z'\in \EE^{\w,\s,\i}_{\de-h}$, $\Gamma',\Lambda',g', H' \in \cC^{\w,\s,\i}_{\de-h}$ where $g'$ is  $(\k,\t)$-flat such that
 \begin{multline*}(H+\Gamma(c,\omega)+g+ \langle \w, \cdot \rangle+(\<\Lambda'(c,\omega),\cdot\>)\circ W)  \circ Z'_{c,\o} (z,w) =\\ \Gamma'(c,\omega)+
\langle \w, zw-c\rangle + H'(z,w,c,\w)+ g'(z,w,c,\w),\end{multline*}
and for any $s\in\N$, $0<h<\delta/2$, 
$$[H']_{\delta-h,s}\leq \cC_{s}(\kappa h)\zeta_{\delta,0}\<\e,\e,\zeta\>_{\delta,s}$$
\begin{multline*}\max\bigg(\|\Lambda\|_{0,\delta-h,s},\|g'-g\|_{\delta-h,s}, \{H'^{[2]}-H^{[2]}\}_{\delta-h,s},\\ \{Z'-id\}_{\delta-h,s},\{W\circ Z'-W\}_{\delta-h,s}\biggr)\leq  \cC_{s}(\kappa h)\zeta_{\delta,0}^{a}\<\e,\zeta+\eta\>_{\delta,s}\end{multline*}
{Furthermore, if $(H-H^{[2]})(z,w,c^2,\omega)=\cO^{2q+1}(z,w,c)$ and $g(z,w,c^2,\omega)=\cO^{2q+1}(z,w,c)$, then $(H'-{H'}^{[2]})(z,w,c^2,\omega)=\cO^{2q+1}(z,w,c)$ and $(Z'-id)(z,w,c^2,\omega)=\cO^{2q}(z,w,c)$, $\Lambda'(c^2,\omega) =\cO^{2q+1}(c)$, $g'(z,w,c^2,\omega)=\cO^{2q+1}(z,w,c)$.}
\end{Lem}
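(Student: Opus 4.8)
The plan is to obtain Lemma \ref{indLem} by composing the two preceding lemmas: first I would use Lemma \ref{lambda:elliptic} to remove, by adding a counter term $\langle\Lambda',\cdot\rangle$ (precomposed with $W$), the obstruction $M$; then I would apply Lemma \ref{sanslambda:elliptic} to the resulting Hamiltonian, whose $M$ now vanishes, to produce the symplectic $\sigma$-symmetric change $Z'$ that makes the affine part quadratically small. The constant term $\Gamma(c,\omega)$ and the $(\kappa,\tau)$-flat term $g$ play a passive role: a function independent of $(z,w)$ is unchanged under composition with $Z'$, and a $(\kappa,\tau)$-flat function composed with any $\cC^{\omega,\infty}$ map stays $(\kappa,\tau)$-flat (all $(z,w,c,\omega)$-derivatives of the composition still vanish on $DC(\kappa,\tau)$ by the chain rule), so it suffices to normalize $H+\langle\omega,\cdot\rangle$ while carrying $g$ and $\Gamma$ along.

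Concretely, I would split the loss of width as $h=h/2+h/2$. Applying Lemma \ref{lambda:elliptic} to $H$ with the given $W$ and step $h/2$ -- its hypothesis $\eta_{\delta,0}\le\cC_0\zeta_{\delta,0}^{-1}$ being the hypothesis $\eta_0\le C\zeta_0^{-1}$ of Lemma \ref{indLem} -- yields a $\sigma$-symmetric $\Lambda'=\Lambda'(c,\omega)$ with $M_{\tilde H_{\Lambda'}}=0$, where $\tilde H_{\Lambda'}=H+\langle\Lambda',\cdot\rangle\circ W$, together with bounds on $\|\Lambda'\|_{\delta-h/2,s}$ and $[\tilde H_{\Lambda'}-H_{\Lambda'}]_{\delta-h/2,s}$ of the form $\cC_s(\kappa h)\zeta_{\delta,0}\langle\e,\zeta,\eta+\zeta+\e\rangle_{\delta,s}$, and (in the flat case) $\Lambda'(c^2,\omega)=\cO^{2q+1}(c)$, $(\tilde H_{\Lambda'}-\tilde H_{\Lambda'}^{[2]})(z,w,c^2,\omega)=\cO^{2q+1}(z,w,c)$. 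Since $[\tilde H_{\Lambda'}]_{\delta-h/2,1}$ is then bounded, up to the usual loss $\cC_1(\kappa h)$, by $\e_{\delta,1}$ plus terms at least linear in $\e_{\delta,1}$, the hypothesis $\e_1\le(\kappa h\,\zeta_{\delta,1}^{-1}(1+\eta_{\delta,1})^{-1})^a$ of Lemma \ref{indLem} guarantees -- provided $a$ is chosen large and $C$ small enough -- that the smallness hypothesis \eqref{small} of Lemma \ref{sanslambda:elliptic} holds for $\tilde H_{\Lambda'}$ on width $\delta-h/2$ with step $h/2$.

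Applying Lemma \ref{sanslambda:elliptic} to $\tilde H_{\Lambda'}$ then produces $Z'\in\EE^{\omega,\sigma,\infty}_{\delta-h}$, $\sigma$-symmetric $\tilde\Gamma,H'\in\cC^{\omega,\sigma,\infty}_{\delta-h}$ and a $(\kappa,\tau)$-flat $\sigma$-symmetric $\tilde g$ with $(\tilde H_{\Lambda'}+\langle\omega,\cdot\rangle)\circ Z'_{c,\omega}=\tilde\Gamma+\langle\omega,zw-c\rangle+H'+\tilde g$, together with $[H']_{\delta-h,s}\le\cC_s(\kappa h)\zeta_{\delta,0}\langle\e,\e,\zeta\rangle_{\delta,s}$, bounds of the form $\cC_s(\kappa h)\zeta_{\delta,0}\langle\e,\zeta\rangle_{\delta,s}$ on $\{H'^{[2]}-\tilde H_{\Lambda'}^{[2]}\}_{\delta-h,s}$, $\{Z'-\id\}_{\delta-h,s}$ and $\|\tilde g\|_{\delta-h,s}$ (the $\e,\zeta$ of $\tilde H_{\Lambda'}$ being of the same order as those of $H$ by the previous step), and, in the flat case, $(H'-H'^{[2]})(z,w,c^2,\omega)=\cO^{2q+1}$, $(Z'-\id)(z,w,c^2,\omega)=\cO^{2q}$, $\tilde g(z,w,c^2,\omega)=\cO^{2q+1}$. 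Setting $\Gamma'=\Gamma+\tilde\Gamma$ and $g'=g\circ Z'+\tilde g$ (still $(\kappa,\tau)$-flat, and $\cO^{2q+1}(z,w,c)$ in the flat case since $g\circ Z'$ is, $Z'-\id$ being $\cO^{2q}$), and using $\Gamma\circ Z'=\Gamma$, one obtains the conjugacy identity of the lemma. The bound on $\{W\circ Z'-W\}_{\delta-h,s}$ and the bounds on $\{H'^{[2]}-H^{[2]}\}$ and $\|g'-g\|$ follow by combining the $W$-contribution from Lemma \ref{lambda:elliptic}, the $Z'$-contribution from Lemma \ref{sanslambda:elliptic}, the inequality \eqref{eq106}, and the composition estimates of Section 10 of \cite{EFK}; $\sigma$-symmetry of every object is part of the statements of the two lemmas.

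The substance of the argument lies entirely in Lemmas \ref{sanslambda:elliptic} and \ref{lambda:elliptic}; what requires care here is the bookkeeping -- checking that adding the counter term $\langle\Lambda',\cdot\rangle\circ W$ does not destroy the power-type smallness hypothesis of Lemma \ref{sanslambda:elliptic} once the $h$-budget has been halved and the extra factor $(1+\eta_{\delta,1})$ coming from $W$ is accounted for, and propagating the $\cO^{2q}$ and $\cO^{2q+1}$ orders through the two successive compositions.
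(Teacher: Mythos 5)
Your proposal is correct and is essentially the paper's own argument: the paper proves Lemma \ref{indLem} precisely by combining Lemma \ref{lambda:elliptic} (eliminate $M$ via the counter term $\langle\Lambda',\cdot\rangle\circ W$) with Lemma \ref{sanslambda:elliptic} (conjugate the resulting Hamiltonian with $M=0$), referring to Proposition 8.2 of \cite{EFK} for the bookkeeping you spell out (splitting the loss of width, verifying the smallness hypothesis \eqref{small} for $\tilde H_{\Lambda'}$, carrying $\Gamma$ and the flat term $g$ along, and propagating the $\cO^{2q}$, $\cO^{2q+1}$ orders).
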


\subsubsection{Convergence of the KAM scheme} As in  Section 8.5 of \cite{EFK} the preceding Lemma \ref{indLem} applied inductively is enough to prove  Theorem \ref{theo:5.1}. We refer the reader to Sections 8.4 and 8.5 of \cite{EFK} for a proof of this fact. 

{\subsubsection{End of the proof} To this point we have proven  a theorem, let us call it $(T')$,  which is Theorem \ref{theo:5.1} except the statement on  the analyticity with respect to $\lambda$  when $\omega$ is replaced by  $(1+\lambda)\omega_{0}$.  This theorem $(T')$  applied   to  the analytic function $(z,w,(c,\lambda))\mapsto H(z,w,c,(1+\lambda)\omega_{0})$, $(z,w,c,\lambda)\in \bD_{\delta}^{2d}\times \bD_{\delta}^{d+1}$,  with $s=0$,   completes the proof of Theorem \ref{theo:5.1}.}

\section{Appendix }
\subsection{Appendix A}

For $\kappa>0$  we assume given for some $\eta_\k>0$, with $\lim_{\kappa  \to 0} \eta_\kappa = 0$, and a family of maps $W_\kappa: \R^d \times \T^d \to \R^d \times \R^d$ that are of the form  
\begin{align*} W_\kappa(c,\theta)&=W(c,\th)+\eps_\kappa \\
W(c,\th)&=\left(c_1 \sin(2\pi \th_1),c_1 \cos(2\pi \th_1),\ldots,c_d \sin(2\pi \th_d),c_d \cos(2\pi \th_d)\right)\end{align*}
where $\eps_\kappa(0,\cdot)\equiv0$   we have for every $\xi\leq  \eta_\k$
\begin{equation} \label{jac}  \max_{x \in \cC_d(\xi) \times \T^d} \left( | \eps_\kappa (x)|+|  \nabla \eps_\kappa (x)| \right) < \xi^{2d+1} \end{equation}
where $\cC_d(\xi):= \{ c \in \R^d : |c_i|<\xi, \forall i\}$, and assume that  $\Sigma_\kappa \subset \cC_d (\eta_\k)$ are a family of measurable sets such that 
$$\lim_{\kappa \to 0} \frac{\mes(\Sigma_{\kappa})}{\mes(\cC_{d}(\eta_\k))} =1.$$

Denote $B_{2d}(0,\xi)$  the product $\{x_1^2+y_1^2\leq \xi \} \times \ldots \times \{x_d^2+y_d^2 \leq \xi \}$.
  
Then we have the following 
\begin{lemma}\label{lemmaA1}  Denote by $\widetilde{\Sigma}_{\kappa}=W_\kappa(\Sigma_\kappa \times \T^d)$. Then, for any $\nu>0$, if $\k$ is sufficiently small we have
\begin{equation} \label{ed1} \mes(\widetilde{\Sigma}_{\k} \cap B_{2d}(0,\eta_\k))/\mes(B_{2d}(0,\eta_\k)) > 1-\nu\end{equation}
\end{lemma}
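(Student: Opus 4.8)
The plan is to realize $\widetilde\Sigma_\kappa$ as the image, under a map $C^1$-close to the identity, of the polar image of a subset of the cube $\cC_d(\eta_\kappa)$ carrying almost all of the relevant mass, and then to transport Lebesgue measure through this near-identity map.

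First I would restrict everything to the positive sector $\cC_d^+(\eta_\kappa):=\{c:0<c_i<\eta_\kappa\ \forall i\}$. On $\cC_d^+(\eta_\kappa)\times\T^d$ the map $W$ is the injective polar-coordinate change of variables $(c_i,\theta_i)\mapsto(c_i\sin 2\pi\theta_i,\,c_i\cos 2\pi\theta_i)$ performed in each pair of variables, with Jacobian $\det DW=\pm(2\pi)^d\,c_1\cdots c_d$ and with image $W(\cC_d^+(\eta_\kappa)\times\T^d)=\prod_j\{x_j^2+y_j^2<\eta_\kappa^2\}=B_{2d}(0,\eta_\kappa)$ up to a Lebesgue-null set. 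Since $\cC_d^+(\eta_\kappa)\subset\cC_d(\eta_\kappa)$, the hypothesis $\mes(\cC_d(\eta_\kappa)\setminus\Sigma_\kappa)/\mes(\cC_d(\eta_\kappa))\to0$ gives $\mes(\cC_d^+(\eta_\kappa)\setminus\Sigma_\kappa)=o(\eta_\kappa^d)$. Next I would excise a thin slab around the coordinate hyperplanes: set $\rho_\kappa:=\eta_\kappa^2$ and $G_\kappa:=\{c:\rho_\kappa<c_i<\eta_\kappa-\rho_\kappa\ \forall i\}$. An elementary one-dimensional computation in each variable gives
\[
\int_{G_\kappa}c_1\cdots c_d\,dc=(1-2\eta_\kappa)^d\int_{\cC_d^+(\eta_\kappa)}c_1\cdots c_d\,dc ,
\]
and, since $c_1\cdots c_d\le\eta_\kappa^d$ on the cube, $\int_{\cC_d^+(\eta_\kappa)\setminus\Sigma_\kappa}c_1\cdots c_d\,dc=o(\eta_\kappa^{2d})$; combining these with the change of variables for $W$,
\[
\mes\bigl(W((\Sigma_\kappa\cap G_\kappa)\times\T^d)\bigr)=(2\pi)^d\int_{\Sigma_\kappa\cap G_\kappa}c_1\cdots c_d\,dc=(1-o(1))\,\mes\bigl(B_{2d}(0,\eta_\kappa)\bigr).
\]

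Second, I would control $W_\kappa$ on the annular set $\mathcal A_\kappa:=W(G_\kappa\times\T^d)$, which stays at Euclidean distance $\ge\rho_\kappa$ from each of the $2$-planes $\{x_j=y_j=0\}$. There $\|DW^{-1}\|\lesssim\rho_\kappa^{-1}$, so the conjugate $\phi_\kappa:=W_\kappa\circ W^{-1}=\id+\eps_\kappa\circ W^{-1}$ satisfies $\|\phi_\kappa-\id\|_{C^1(\mathcal A_\kappa)}\lesssim\eta_\kappa^{2d+1}\rho_\kappa^{-1}=\eta_\kappa^{2d-1}$, using \eqref{jac} with $\xi=\eta_\kappa$. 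For $\kappa$ small this forces $\phi_\kappa$ to be injective on $\mathcal A_\kappa$ — if $\phi_\kappa(p)=\phi_\kappa(q)$ then $|p-q|\le 2\eta_\kappa^{2d+1}\ll\rho_\kappa$, so the segment $[p,q]$ stays at distance $\ge\rho_\kappa/2$ from the coordinate $2$-planes, $\|D(\phi_\kappa-\id)\|<1$ along it, and the mean value inequality gives $p=q$ — and it forces $|\det D\phi_\kappa|\ge 1-C\eta_\kappa^{2d-1}$; it also gives $\phi_\kappa(\mathcal A_\kappa)\subset B_{2d}(0,\eta_\kappa)$, since $\phi_\kappa$ displaces points by at most $\eta_\kappa^{2d+1}$, far less than the outer margin $\rho_\kappa$ built into $G_\kappa$.

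Finally, since $W_\kappa=\phi_\kappa\circ W$ on $G_\kappa\times\T^d$ we get $\widetilde\Sigma_\kappa\cap B_{2d}(0,\eta_\kappa)\supset\phi_\kappa\bigl(W((\Sigma_\kappa\cap G_\kappa)\times\T^d)\bigr)$, so by the area formula and the two displays above
\[
\mes\bigl(\widetilde\Sigma_\kappa\cap B_{2d}(0,\eta_\kappa)\bigr)\ge(1-C\eta_\kappa^{2d-1})\,\mes\bigl(W((\Sigma_\kappa\cap G_\kappa)\times\T^d)\bigr)=(1-o(1))\,\mes\bigl(B_{2d}(0,\eta_\kappa)\bigr),
\]
which exceeds $(1-\nu)\mes(B_{2d}(0,\eta_\kappa))$ once $\kappa$ is small enough. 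I expect the only genuinely delicate point to be the degeneracy of $W$ along the coordinate $2$-planes, where $DW^{-1}$ blows up and the near-identity bound for $\phi_\kappa$ would break down; the remedy is precisely the excision of $\{c_i\le\rho_\kappa\}$, and the reason it is harmless is that this region is negligible not merely in Lebesgue measure but — what matters here — after weighting by the Jacobian $c_1\cdots c_d$, so that no fixed proportion of $B_{2d}(0,\eta_\kappa)$ is ever lost.
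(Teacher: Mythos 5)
Your proof is correct, and it takes a variant route through the same basic difficulty (the degeneracy of the polar map $W$ where some $c_i=0$), so let me compare. The paper's own argument excises the complement of the cone $\cC_d(\eta_\k,\eps)=\{|c_i|>\eps|c_j|\ \forall i,j\}$ and compares $\Jac W_\k$ with $\Jac W$ directly there; since on that cone $|\Jac W|\sim c_1\cdots c_d$ can be as small as $(\eps\xi)^d$ for points with $\max_i|c_i|\approx\xi\ll\eta_\k$, this comparison genuinely needs the bound \eqref{jac} at every scale $\xi\le\eta_\k$, and it runs a two-parameter limit (first $\eps$, then $\k$ small); the injectivity/multiplicity control needed to turn the Jacobian comparison into the lower bound $\mes(W_\k(\Sigma_{\k,\eps}\times\T^d))>(1-\nu^2)\mes(W(\Sigma_{\k,\eps}\times\T^d))$ is left implicit. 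You instead excise the fixed-width slab $\{c_i\le\eta_\k^2\}$ (plus an outer margin), conjugate to the near-identity map $\phi_\k=W_\k\circ W^{-1}=\id+\eps_\k\circ W^{-1}$ with the explicit $C^1$ bound $\eta_\k^{2d+1}\rho_\k^{-1}=\eta_\k^{2d-1}$, and prove injectivity, the Jacobian lower bound, and the containment $\phi_\k(\mathcal{A}_\k)\subset B_{2d}(0,\eta_\k)$ outright; the exact identity $\int_{G_\k}c_1\cdots c_d\,dc=(1-2\eta_\k)^d\int_{\cC_d^+(\eta_\k)}c_1\cdots c_d\,dc$ then quantifies precisely what the excision costs after the Jacobian weighting. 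What each approach buys: the paper's cone is the natural scale-invariant good set and yields the statement with minimal computation, at the price of using the full multi-scale form of \eqref{jac} and leaving the injectivity point to the reader; your slab argument is more self-contained (injectivity and image containment are proved, not asserted), uses only the single scale $\xi=\eta_\k$ of \eqref{jac} and a single limit in $\k$, and makes the measure loss explicit and of order $\eta_\k$. Both are valid proofs of \eqref{ed1}.
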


\begin{proof}  For $\eps>0$, define $\cC_d(\eta_\k,\eps):=  \cC_d(\eta_\k)  \cap \{ |c_i|> \eps |c_j|, \forall i,j \}$. 
We also define $\Sigma_{\kappa,\eps}=\Sigma_\kappa \cap \cC_d(\eta_\k,\eps)$.

We have that $W(\cC_d(\eta_\k) \times \T^d)=B_{2d}(0,\eta_\k)$. Also, it is not hard to see that if $\eps$ and  then $\kappa$ are sufficiently small then
\begin{equation} \label{ed0} \mes(W(\Sigma_{\k,\eps} \times \T^d))/\mes(B_{2d}(0,\eta_\k)) > 1-\nu^2\end{equation}
and from \eqref{jac}
\begin{equation*} \label{ed3} | \Jac W_\k - \Jac W | < \nu^2 |\Jac W|\end{equation*}
on  $\cC_d(\eta_\k,\eps) \times \T^d$,  which gives
\begin{equation}\label{ed4} \mes(W_\k(\Sigma_{\k,\eps} \times \T^d )) > (1-\nu^2) \mes(W(\Sigma_{\k,\eps} \times \T^d ))
\end{equation}
Wa also have that $W_\k(\cC_d(\eta_\k)\times \T^d ) \subset B_{2d}(0,\eta_\k+o(\eta_\k))$.
 \eqref{ed1} hence follows from \eqref{ed0} and \eqref{ed4}  if $\nu \ll 1$. 

\end{proof}

\subsection{Appendix B: Generating functions and time-1 map of Hamiltonian flows}
There are two classical methods to construct symplectic diffeomorphisms. The first one, which we have been using throughout the paper, is the generating function method: given $f:(\bC^{2d},0)\to\C$ we define the symplectomorphism $Z:(\bC^{2d},0)\to (\bC^{2d},0)$ {\it implicitly} by the equations 
\be\begin{cases}\ Z_{f}(z,w)=(z',w') 
\end{cases}
 \iff  \label{2.5} \begin{cases}
z'=z+\pa_{w'}f(z,w')\\
w=w'+\pa_{z}f(z,w').
\end{cases}  \ee

A second classical method is to use the so-called ``Lie method''. Given $f:(\bC^{2d},0)\to\C$, we introduce the hamiltonian flow $(\phi_{f}^t)$ defined by the hamiltonian (with respect to the symplectic form $dz\wedge dw$) vector field $J\nabla f$ where $J=\begin{pmatrix}0& -I_{d}\\I_{d}& 0\end{pmatrix}$ and we let $\ti Z_{f}$ be the time-1-symplectomorphism $\phi^1_{f}$.

The first method is well adapted in the formal setting but has  the drawback of not preserving $\s$-symmetry. On the other hand, the Lie method preserves $\s$-symmetry.
\begin{Lem}\label{lem71}If $\sqrt{-1}f$ is $\s$-symmetric, then $\ti Z_{f}$ is $\s$-symmetric.
\end{Lem}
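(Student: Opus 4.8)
The statement to prove is Lemma~\ref{lem71}: if $\sqrt{-1}f$ is $\s$-symmetric, then the time-$1$ map $\ti Z_f = \phi^1_f$ of the Hamiltonian vector field $J\nabla f$ is $\s$-symmetric, i.e. $\s^{-1}\circ \ti Z_f\circ\s = \ti Z_f$. The natural approach is to show that the involution $\s$ conjugates the whole flow $(\phi^t_f)$ to itself, and then specialize to $t=1$. First I would recall that $\s$ is the anti-holomorphic involution $(z,w)\mapsto(\bar w,\bar z)$ (ignoring the $c$-variable here, since $f$ does not involve it in this lemma). I would compute the pushforward vector field $\s_*(J\nabla f)$ and verify that it equals $J\nabla f$ again, using precisely the hypothesis that $\sqrt{-1}f$ is $\s$-symmetric. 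Once $\s_*X = X$ for $X = J\nabla f$, uniqueness of solutions of the ODE gives $\s\circ\phi^t_f = \phi^t_f\circ\s$ for all $t$, hence in particular $\s\circ\ti Z_f = \ti Z_f\circ\s$, which is the claim.

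\textbf{Key steps, in order.} (1) Write $f = f(z,w)$ with $\sqrt{-1}f\circ\s = \overline{\sqrt{-1}f}$, i.e. $\sqrt{-1}\,\overline{f(\bar w,\bar z)} = -\sqrt{-1}\,\overline{f(z,w)}$, which says $\overline{f(\bar w,\bar z)} = -\,\overline{f(z,w)}$, equivalently $f(\bar w,\bar z) = -f(z,w)$ after conjugating; so $f\circ\s = -\bar f$ as holomorphic functions, where $\bar f(z,w) := \overline{f(\bar z,\bar w)}$. (2) The Hamiltonian vector field is $X_f = J\nabla f = (\pa_w f, -\pa_z f)$. Its flow solves $\dot z = \pa_w f(z,w)$, $\dot w = -\pa_z f(z,w)$. (3) Apply $\s$: set $(\z,\o) = \s(z,w) = (\bar w,\bar z)$ and differentiate along the flow. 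One gets $\dot\z = \overline{\dot w} = -\overline{\pa_z f(z,w)} = -\overline{(\pa_z f)(\bar\o,\bar\z)}$. Now use the relation from step (1): since $f(\bar w,\bar z) = -f(z,w)$, differentiating gives $(\pa_z f)(\bar w,\bar z)\cdot$(chain rule) $= \ldots$; more cleanly, from $f\circ\s = -\bar f$ one reads off $\pa_z(f\circ\s) = \overline{(\pa_w f)(\bar z,\bar w)}$ evaluated appropriately, so that $\dot\z = \pa_\o \tilde f(\z,\o)$ with the same $f$ (using $J$-structure and that $\s$ swaps the two slots). (4) Conclude that $t\mapsto \s(\phi^t_f(z,w))$ satisfies the same ODE as $t\mapsto \phi^t_f(\s(z,w))$ with the same initial condition at $t=0$ (namely $\s(z,w)$), hence they coincide by uniqueness; evaluating at $t=1$ gives $\s\circ\ti Z_f = \ti Z_f\circ\s$, i.e. $\ti Z_f$ is $\s$-symmetric.

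\textbf{Main obstacle.} The only genuinely delicate point is the bookkeeping in step (3): one must track carefully how the anti-holomorphic involution $\s$ interacts with the complex partial derivatives $\pa_z,\pa_w$ and with the symplectic matrix $J$, making sure the signs produced by complex conjugation, by the slot-swap $(z,w)\mapsto(\bar w,\bar z)$, and by the antisymmetry of $J$ all cancel — it is exactly the factor $\sqrt{-1}$ in the hypothesis ``$\sqrt{-1}f$ is $\s$-symmetric'' (rather than ``$f$ is $\s$-symmetric'') that makes the signs work out so that $\s_*X_f = X_f$ rather than $-X_f$. Everything else is a routine invocation of uniqueness for ODEs. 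I would also remark that the same computation, carried out with the $c$-variable present and $\s(z,w,c)=(\bar w,\bar z,\bar c)$, gives the parametrized version needed in Lemma~\ref{sanslambda:elliptic}, but for the present lemma the unparametrized statement suffices.
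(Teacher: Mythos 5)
Your strategy is sound and genuinely different from the paper's. The paper proves the lemma by transporting everything to the coordinates $z=\frac12(x+\sqrt{-1}y)$, $w=\frac12(x-\sqrt{-1}y)$: there the Hamiltonian field becomes $\sqrt{-1}J\nabla \ti f(x,y)$ with $\ti f(x,y)=f(z,w)$, the hypothesis that $\sqrt{-1}f$ is $\s$-symmetric says exactly that $\sqrt{-1}\ti f$ is real on real $(x,y)$, and the claim reduces to the standard fact that the time-$1$ map of a real-analytic Hamiltonian preserves reality; no chain-rule bookkeeping with the anti-holomorphic involution is needed. You instead stay in the $(z,w)$ variables and show directly that $\s$ intertwines the flow with itself, via equivariance of the vector field plus uniqueness for the ODE; this is self-contained, makes explicit which identity on the derivatives of $f$ is used, and carries over verbatim to the parametrized setting, at the cost of exactly the sign/conjugation bookkeeping that the paper's change of variables sidesteps.

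That bookkeeping is where your write-up slips, and since you yourself identify it as the only delicate point you should carry it out correctly. From the definition of $\s$-symmetry, ``$\sqrt{-1}f$ is $\s$-symmetric'' gives $\sqrt{-1}\,f(\bar w,\bar z)=\overline{\sqrt{-1}\,f(z,w)}$, hence $f(\bar w,\bar z)=-\,\overline{f(z,w)}$; your intermediate statement ``equivalently $f(\bar w,\bar z)=-f(z,w)$'' is false as written (the conjugation cannot be dropped), and ``$f\circ\s=-\bar f$ as holomorphic functions'' is an abuse since $f\circ\s$ is anti-holomorphic — the correct holomorphic identity is $f(w,z)=-\bar f(z,w)$ with $\bar f(z,w):=\overline{f(\bar z,\bar w)}$. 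Differentiating this identity and evaluating at conjugate points gives $\pa_w f(\bar w,\bar z)=-\,\overline{\pa_z f(z,w)}$ and $\pa_z f(\bar w,\bar z)=-\,\overline{\pa_w f(z,w)}$, which is precisely the statement that $t\mapsto\s(\phi^t_f(z,w))$ solves the same system as $t\mapsto\phi^t_f(\s(z,w))$; your step (3) should state these two relations instead of the ellipsis and ``evaluated appropriately'' (also, the $\ti f$ appearing there is undefined). Note finally that your $J\nabla f=(\pa_w f,-\pa_z f)$ has the opposite sign to the paper's convention $J=\begin{pmatrix}0&-I_d\\ I_d&0\end{pmatrix}$; this is harmless, since equivariance of a vector field under $\s$ is insensitive to an overall sign, but it is worth aligning with the paper. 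With these corrections your argument is complete and correct.
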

\begin{proof}Let us use the  change of variables $(z,w)\mapsto (x,y)$, $z=\frac{1}{2}(x+\sqrt{-1}y)$, $w=\frac{1}{2}(x-\sqrt{-1}y)$. It transforms the symplectic form $dz\wedge dw$ to $-\sqrt{-1}dx\wedge dy$ and the hamiltonian flow $J\nabla f(z,w)$ is transported to $\sqrt{-1}J\nabla \ti f(x,y)$ where $\ti f(x,y)=f(z,w)$. But $\sqrt{-1}\ti f(x,y)$ takes real values when $x$ and $y$ are real (since $\sqrt{-1}f$ is $\s$-symmetric). Hence its time-1-map has the same property. Coming back to the variables $(z,w)$ shows that $\ti Z_{f}$ is $\s$-symmetric.
\end{proof}

Nevertheless, we notice that, $Z_{f}$ and $\ti Z_{f}$ differ by a quantity which is quadratic in $f$ (and its derivatives).

\begin{Prop}\label{PropApp}There exists $\xi>0$ such that if  $f\in \cC^{\omega,\infty}_{\delta}$ satisfies for $0<h<\delta$
\be \|f\|_{\delta,1}\leq \xi h^2
\ee
then \footnote{The notations are those of Section \ref{Sec:6.2}. },
\be\{\ti Z_{f}^{-1}\circ Z_{f}-Id\}_{\delta-h,s}=\cC_{s}(h)\<\|f\|,\|f\|\>_{\delta,s}\ee
Furthermore, if $f(z,w,c^2,\omega)=\cO^{2q+1}(z,w,c)$ then $\ti Z_{f}^{-1}\circ Z_{f}-id=\cO^{2q}(z,w,c)$.
\end{Prop}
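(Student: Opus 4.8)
The plan is to compare both $Z_{f}$ and $\ti Z_{f}$ to the common ``Euler step'' $\mathrm{id}+X_{f}$, where $X_{f}$ is the Hamiltonian vector field of $f$ with respect to $dz\wedge dw$; I will show that each of the remainders $Z_{f}-(\mathrm{id}+X_{f})$ and $\ti Z_{f}-(\mathrm{id}+X_{f})$ is \emph{quadratic} in $f$, in the sense that its $\{\cdot\}_{\delta-h,s}$ norm is $\le\cC_{s}(h)\,\langle\|f\|,\|f\|\rangle_{\delta,s}$. Granting this, the estimate for $\ti Z_{f}^{-1}\circ Z_{f}-\mathrm{id}$ follows by a short composition argument, the factors $\cC_{s}(h)=C_{s}h^{-\alpha(s)}$ absorbing the derivative losses from the Cauchy estimates and from shrinking the polydisk from $\delta$ to $\delta-h$.

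First I would check that $\|f\|_{\delta,1}\le\xi h^{2}$ (for $\xi$ small, depending only on $\tau,d$) makes both $Z_{f}$ and $\ti Z_{f}$ well defined on $\bD^{2d}_{\delta-h}\times\bD^{d}_{\delta-h}\times B$. By a Cauchy estimate $\|\pa_{z}f\|_{\delta-h/2,0}$ and $\|\pa_{w}f\|_{\delta-h/2,0}$ are $\lesssim h^{-1}\|f\|_{\delta,0}\le\xi h<h/2$, so the equation $w=w'+\pa_{z}f(z,w')$ has, by the implicit function theorem (Proposition~10.3 of \cite{EFK}), a unique solution $w'=w'(z,w,c,\omega)$ with $|w'-w|<h/2$ when $|w|\le\delta-h$, hence $z'=z+\pa_{w'}f(z,w')$ is defined there as well; likewise the same bound on $X_{f}$ and Gronwall's inequality give that $\phi_{f}^{t}$ exists for $|t|\le1$ on $\bD_{\delta-h}$ with $\{\phi_{f}^{t}-\mathrm{id}\}_{\delta-h,0}\lesssim h^{-1}\|f\|_{\delta,0}$, so $\ti Z_{f}=\phi_{f}^{1}$ and its inverse $\phi_{f}^{-1}$ are defined on a slightly smaller polydisk.

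Next come the two expansions. For $Z_{f}$, write $w'=w-\pa_{z}f(z,w)+\rho$ and substitute into $w=w'+\pa_{z}f(z,w')$ to get $\rho=\pa_{z}f(z,w)-\pa_{z}f\bigl(z,w-\pa_{z}f(z,w)+\rho\bigr)$; Taylor-expanding the right-hand side exhibits $\rho$, and hence both $z'-z-\pa_{w}f(z,w)$ and $(w-w')-\pa_{z}f(z,w)$, as a finite sum of terms each carrying two factors among $\pa f,\pa^{2}f$, composed with the near-identity map $(z,w)\mapsto(z,w')$; together with the identification (a one-line computation from \eqref{2.2}) of $(\pa_{w}f,-\pa_{z}f)$ with $X_{f}$, and the composition estimates of Section~10 of \cite{EFK}, this yields $\{Z_{f}-(\mathrm{id}+X_{f})\}_{\delta-h,s}\le\cC_{s}(h)\langle\|f\|,\|f\|\rangle_{\delta,s}$. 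For $\ti Z_{f}$, from $\phi_{f}^{1}=\mathrm{id}+\int_{0}^{1}X_{f}\circ\phi_{f}^{t}\,dt=\mathrm{id}+X_{f}+\int_{0}^{1}\bigl(X_{f}\circ\phi_{f}^{t}-X_{f}\bigr)dt$ and $\{\phi_{f}^{t}-\mathrm{id}\}\lesssim h^{-1}\|f\|$, the integrand equals $(DX_{f}\circ\phi_{f}^{t})\cdot(\phi_{f}^{t}-\mathrm{id})$ to leading order, again doubly $f$-quadratic, so $\{\ti Z_{f}-(\mathrm{id}+X_{f})\}_{\delta-h,s}\le\cC_{s}(h)\langle\|f\|,\|f\|\rangle_{\delta,s}$.

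Finally I would combine the two. Put $A=Z_{f}-\mathrm{id}$, $B=\ti Z_{f}-\mathrm{id}$, so that $\{A\}_{\delta-h,s},\{B\}_{\delta-h,s}\le\cC_{s}(h)\|f\|_{\delta,s}$ while $\{A-B\}_{\delta-h,s}\le\cC_{s}(h)\langle\|f\|,\|f\|\rangle_{\delta,s}$. Writing $\ti Z_{f}^{-1}=\mathrm{id}+C$, the identity $B+C\circ(\mathrm{id}+B)=0$ together with $C,DB=\cO(h^{-1}\|f\|)$ gives $\{C+B\}_{\delta-h,s}\le\cC_{s}(h)\langle\|f\|,\|f\|\rangle_{\delta,s}$, and then $\ti Z_{f}^{-1}\circ Z_{f}-\mathrm{id}=A+C\circ(\mathrm{id}+A)=(A-B)+(C+B)+\bigl(C\circ(\mathrm{id}+A)-C\bigr)$, each summand being quadratic; passing between the $\|\cdot\|$ and $\{\cdot\}$ norms by \eqref{eq106} and using the composition estimates of Section~10 of \cite{EFK} (with all $h$-losses absorbed into $\cC_{s}(h)$) gives the stated bound. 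The last assertion is then immediate: if $f(z,w,c^{2},\omega)=\cO^{2q+1}(z,w,c)$ then $X_{f}=\cO^{2q}(z,w,c)$, so by the two expansions $Z_{f}-\mathrm{id}$ and $\ti Z_{f}-\mathrm{id}$ are $\cO^{2q}(z,w,c)$, and since composition with a map of the form $\mathrm{id}+\cO^{2}$ preserves the vanishing order, $C=\ti Z_{f}^{-1}-\mathrm{id}$ and hence $\ti Z_{f}^{-1}\circ Z_{f}-\mathrm{id}$ are $\cO^{2q}(z,w,c)$. The one step requiring genuine care is the implicit-function expansion of $Z_{f}$: one must check that every remainder term really carries two factors of $f$ (or its derivatives) and that the accumulated Cauchy and domain losses fit an admissible exponent $\alpha(s)$ in $\cC_{s}(h)$; this is routine given the tools of \cite{EFK}, but it is where the bookkeeping lives.
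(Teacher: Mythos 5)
Your proposal is correct and follows essentially the same route as the paper: the intermediate map $\mathrm{id}+X_f$ you compare against is exactly the paper's $W(z,w)=(z+\pa_w f,\,w-\pa_z f)$, your two quadratic-remainder estimates correspond to Lemmas \ref{AppL1} and \ref{AppL2}, and the conclusion (including the $\cO^{2q}$ statement) is obtained, as in the paper, from the composition and inverse estimates of Propositions 10.2--10.3 of \cite{EFK}. The only place the paper is substantially more detailed is the control of the $\omega$-derivatives $\pa_\omega^\alpha$ of the flow in Lemma \ref{AppL2} (an induction with resolvent and convexity estimates), which your sketch subsumes under the \lq\lq routine bookkeeping\rq\rq.
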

\begin{proof}
Let us denote by $W$ the local diffeomorphism $(\bC^{2d},0)\to\bC^{2d}$, $W(z,w)=(z+\pa_{w}f(z,w,c,\omega),w-\pa_{z}f(z,w,c,\omega))$.
Proposition \ref{PropApp} will follow from the following two lemmas \ref{AppL1} and \ref{AppL2}
\begin{Lem}\label{AppL1}There exists a constant $\xi>0$ such that if  $0<h<\delta$ and $f$ satisfy 
\be \|f\|_{\delta,1}\leq \xi h^2\label{aass}\ee
then
\be\{W- Z_{f}\}_{\delta-h,s}=\cC_{s}(h)\<\|f\|,\|f\|\>_{\delta,s}\ee
\end{Lem}
\begin{proof}
If $Z_{f}:(z,w)\mapsto (z',w')$ we can write 
\be\begin{cases}
z'=z+\pa_{w}f(z,w,c,\omega)+ (\pa_{w'}f(z,w',c,\omega)-\pa_{w}f(z,w,c,\omega))\\
w'=w-\pa_{z}f(z,w,c,\omega)+(\pa_{z}f(z,w,c,\omega)-\pa_{z}f(z,w',c,\omega)).
\end{cases}  \ee
and using Proposition (10.3)  of \cite{EFK} we  notice that  $(z,w)\mapsto w'(z,w,c,\omega)-w$  has a $\|\cdot\|_{\delta-h,s}$ norm less or equal than $\cC_{s}(h)\|f\|_{\delta,s}$ and by Proposition 10.2 of \cite{EFK} that $(z,w)\mapsto f(z,w',c,\omega)-f(z,w,c,\omega)$ has a $\|\cdot\|_{\delta-h,s}$ norm less or equal than $\cC_{s}(h) \< \|f\|,\|f\|\>_{\delta,s}$. The conclusion then follows.
\end{proof}

\begin{Lem} \label{AppL2}There exists a constant $\xi>0$ such that if  $0<h<\delta$ and $f$ satisfy 
\be \|f\|_{\delta,0}\leq \xi h^2\label{aass}\ee
then  one has 
\be\{W- \ti Z_{f}\}_{\delta-h,s}=\cC_{s}(h)\<\|f\|,\|f\|\>_{\delta,s}\ee
\end{Lem}
\begin{proof}
If  $u_{u_{0},c,\omega}(\cdot)$ (we shall denote $u$ for short, $u(t):=(z(t),w(t))$) is a solution of the differential equation $u'(t)=J\nabla f(u(t),c,\omega)$, $u(0)=u_{0}$, $|u(0)|\leq \delta-h$, $|c|\leq \delta$, $\omega\in B$ one has, as long as the solution $u(\cdot)$  is defined,
\be u(t)=u(0)+\int_{0}^tJ\nabla f(u(s),c,\omega)ds,\label{a1}\ee

Let $[0,t_{max})$ a maximal interval of definition of the solution $u$ and,  if it exists,   $t_{*}:=\inf\{t\in [0,t_{max}): \ |u(t)|>\delta-(h/2)\}$. One has, for $0\leq t< t_{*}$ and some constant $C$
\begin{align} |u(t)|&\leq |u(0)|+C t\|J\nabla f\|_{\delta,0}\\
&\leq |u(0)|+Ct(h/2)^{-1}\|f\|_{\delta,0}.
\label{a2}\end{align}
Assume that $t_{*}$ exists and is $\leq 1$; then 
\begin{align*} |u(t_{*})| &\leq \delta-h+ Ct(h/2)^{-1}\|f\|_{\delta,0}\\
&\leq \delta-(3h/4) \end{align*}
provided $C(h/2)^{-1}\|f\|_{\delta,0}\leq h/4$,  which is the case if  the constant $\xi$ in (\ref{aass}) is small enough. But, by definition $|u(t_{*})| \geq \delta-(h/2)$ which is a contradiction; hence $t_{*}$ if it exists is $>1$.

The theorem on continuous (and differentiable) dependence of the solution of an O.D.E with respect to the initial condition and parameters then shows that 
 $(z,w)\mapsto \phi^1_{f}(z,w,c,\omega)$ is an analytic diffeomorphism  with respect to $(z,w)\in \bD_{\delta-h}^{2d}$, analytic with respect to $c\in \bD_{\delta}^{2d}$ and depending smoothly on $\omega \in B$.

Now, the Linearization Theorem for O.D.E. tells us that the derivative $v(\cdot)=\pa_{\omega} u_{u_{0},c,\omega}(\cdot)$  of $u_{u_{0},c,\omega}(\cdot)$ with respect to $\omega$ satisfies the affine equation 
\be  v'(t)=D_{u}J\nabla f(u_{u_{0},c,\omega}(t),c,\omega)\cdot v(t)+D_{\omega} J\nabla f(u_{u_{0},c,\omega}(t),c,\omega)\ \ee
with initial condition $v(0)=0$. 
More generally, $\pa_{\omega}^\a u_{u_{0},c,\omega}$ satisfies the differential equation
\be \frac{d}{dt}  \pa_{\omega}^\a u_{u_{0},c,\omega}(t)=D_{u}J\nabla f(u_{u_{0},c,\omega}(t),c,\omega)\cdot \pa_{\omega}^\a u_{u_{0},c,\omega}(t)+G_{\a}(t,u_{0},c,\omega) \label{a5}\ee
with initial condition $\pa_{\omega}u_{u_{0},c,\omega}(0)=0$ and 
where $G_{\a}$ is a finite sum of terms (the number of which depends only on $|\a|$ and $d$)  of the form 
\begin{multline}D_{u}^mD_{\omega}^l J\nabla f(u_{u_{0},c,\omega}(t),c,\omega)\cdot (\pa_{\omega}^{\beta_{1}}u_{u_{0},c,\omega}(t),\ldots,\pa_{\omega}^{\beta_{m}}u_{u_{0},c,\omega}(t))\label{a6}
\end{multline}
with $|\beta_{1}|+\cdots+|\beta_{m}|+l=|\a|$, $(m,l)\ne (1,0)$.
Let us now prove by induction on $|\a|$ that for any $0\leq t\leq 1$
\be \|\pa_{\omega}^\alpha u_{\cdot}(t)\|_{\delta-h,0}\leq \cC_{|\a|}(h(1+||f||_{0,\delta})^{-1})\|f\|_{\delta,|\a|}.\label{aind}\ee
Assume that there exist a positive valued  increasing function $s\mapsto a(s)$ defined for $s\in\N$, $0\leq s\leq |\a|-1$  and $C>0$ such that for any $|\beta|<|\alpha|$, any $0\leq t\leq 1$
\be \|\pa_{\omega}^\beta u_{\cdot}(t)\|_{\delta-h,0}\leq Ch^{-a(|\beta|)}(1+\|f\|_{\delta,0})^{a(|\beta|)} \|f\|_{\delta,|\beta|} .\label{a7}\ee
We get for $|u_{0}|\leq \delta-h$, $|c|\leq \delta$, $\omega\in B$, $0\leq t\leq 1$
\begin{multline*}|(\ref{a6})|\leq C_{\a}\delta^{-m-1}\|D_{\omega}^l f\|_{\delta,0} (Ch^{-a(|\beta_{1}|)}(1+\|f\|_{\delta,0})^{a(|\beta_{1}|)}\|f\|_{\delta,|\beta_{1}|})\cdots \\(Ch^{-a(|\beta_{m}|)}(1+\|f\|_{\delta,0}))^{a(|\beta_{m}|)}\|f\|_{\delta,|\beta_{m}|}),
 \end{multline*}
and using the convexity estimates, see Proposition 10.1 of \cite{EFK} 
\begin{multline*}|(\ref{a6})|\leq C_{\a}\delta^{-m-1}\|f\|_{\delta,0}^{1-l/|\a|}\|f\|_{\delta,|\a|}^{l/|\a|}  \\ (C_{\beta_{1}}h^{-a(|\beta_{1}|)|} (1+\|f\|_{\delta,0})^{a(|\beta_{1}|)}\|f\|_{\delta,0}^{1-|\beta_{1}|/|\a|}\|f\|_{\delta,|\a|}^{|\beta_{1}|/|\a|})\cdots \\ (C_{\beta_{m}}h^{-a(|\beta_{m}|)}(1+\|f\|_{\delta,0})^{a(|\beta_{m}|)}\|f\|_{\delta,0}^{1-|\beta_{m}|/|\a|}\|f\|_{\delta,|\a|}^{|\beta_{m}|/|\a|}) \end{multline*}
and finally since $|\beta_{1}|+\cdots+|\beta_{m}|+l=|\a|$
\be |(\ref{a6})|\leq C_{\a}(h^{-1}(1+\|f\|_{\delta,0}))^{a(|\a|)}\|f\|_{\delta,|\a|}\label{a8} \ee
provided $a(|\a|)\geq m+1+a(|\beta_{1})|+\cdots+a(|\beta_{m}|)$.
Let us come back to the affine differential  equation (\ref{a5}) and let  $R(t,s)$ be the resolvent of the associated  linear differential equation,
\be  v'(t)=D_{u}J\nabla f(u_{u_{0},c,\omega}(t),c,\omega)\cdot v(t).\ee
By the variation of constant formula we get 
$$\pa_{\omega}^\a u_{u_{0},c,\omega}(t)=\int_{0}^tR(t,s) G_{\a}(t,u_{0},c,\omega) ds.$$
For $u_{0}\in \bD_{\delta-h}^{2d}$, $c\in \bD_{\delta}^{2d}$, $\omega\in B$, $0\leq s\leq t\leq 1$  we see that $|R(t,s)|\leq e^{M}$ where $M$ is the supremum of the norm of $D_{u}J\nabla f(u,c,\omega)$ on $\bD_{\delta-h/2}\times \bD_{\delta}\times B$. We notice that  $M\leq {\rm const.} \|f\|_{\delta,0}h^{-2}$ and that if the constant $\xi$ in (\ref{aass}) is small enough $M\leq 1$.
Hence, for $0\leq t\leq 1$, 
we get from (\ref{a8})
\be |\pa_{\omega}^\a u_{u_{0},c,\omega}(t)| \leq  C_{\a}e (h^{-1}(1+\|f\|_{\delta,0}))^{a(|\a|)}\|f\|_{\delta,|\a|}.\ee
This complete the proof of (\ref{aind}) by induction.

To finish the proof of the Lemma we write
\be u_{u_{0},c,\omega}(1)-u_{0}=\int_{0}^1(J\nabla f(u_{u_{0},c,\omega}(s),c,\omega)-J\nabla f(u_{0},c,\omega))ds
\ee
and use Proposition 10.2 (i) of \cite {EFK}:
$$\|u_{\cdot}(1)-\cdot\|_{\delta-h,s}\leq h^{-1}\int_{0}^1\<\|J\nabla f\|,u_{\cdot}(t)\>_{\delta,s}dt$$
which is $\leq \cC_{s}(h(1+\|f\|_{\delta,0}^{-1})\<\|f\|,\|f\|\>_{\delta,s}$.
\end{proof}
The proof of Proposition \ref{PropApp} can now be completed using the estimates of Proposition 10.2 and 10.3 of \cite{EFK} on compositions and inverses of functions. The last statement of the Proposition follows from the validity of a similar statement in Lemma \ref{AppL1} and \ref{AppL2}.
\end{proof}

\end{document}